\documentclass[11pt]{amsart}
\usepackage[english]{babel}
\usepackage[T1]{fontenc}
\usepackage[latin1]{inputenc}
\usepackage{graphicx}
\usepackage{amsmath,amssymb,amsthm,mathrsfs,,txfonts,ifthen,xr}
\usepackage{soul}
\usepackage{array, multirow}
\usepackage{stmaryrd}
\usepackage{mathrsfs}
\usepackage{color}
\usepackage{hyperref}
\usepackage{enumerate}
\usepackage[all]{xy}
\usepackage{tikz-cd}
\usetikzlibrary{decorations.pathreplacing}

\usepackage{ytableau}

\theoremstyle{plain}
\newtheorem{theo}{Theorem}[section]
\newtheorem{lemma}[theo]{Lemma}
\newtheorem{proposition}[theo]{Proposition}
\newtheorem{corollary}[theo]{Corollary}

\theoremstyle{definition}
\newtheorem{definition}[theo]{Definition}

\newtheorem{notation}[theo]{Notation} 

\newtheorem{example}[theo]{Example}

\theoremstyle{remark}
\newtheorem{remark}[theo]{Remark}

\def\A{{\rm A}}
\def\B{{\rm B}}
\def\C{{\rm C}}
\def\D{{\rm D}}

\def\H{{\rm H}}
\def\I{{\rm I}}
\def\J{{\rm J}}
\def\K{{\rm K}}
\def\L{{\rm L}}
\def\M{{\rm M}}
\def\N{{\rm N}}
\def\O{{\rm O}}
\def\P{{\rm P}}

\def\Q{{\rm Q}}

\def\R{{\rm R}}
\def\S{{\rm S}}

\def\T{{\rm T}}
\def\U{{\rm U}}
\def\V{{\rm V}}

\def\X{{\rm X}}
\def\Y{{\rm Y}}
\def\Z{{\rm Z}}

\def\tr{{\rm tr}}
\def\pr{{\rm pr}}
\def\Sp{{\rm Sp}}
\def\GL{{\rm GL}}	
\def\Id{{\rm Id}}
\def\Re{{\rm Re}}
\def\Im{{\rm Im}}
\def\End{{\rm End}}
\def\Lie{{\rm Lie}}
\def\dim{{\rm dim}}
\def\Mat{{\rm Mat}}

\def\Rad{{\rm Rad}}
\def\trd{{\rm trd}}
\def\log{{\rm log}}
\def\BCH{{\rm BCH}}
\def\Spec{{\rm Spec}}
\def\diag{{\rm diag}}
\def\cosh{{\rm cosh}}
\def\sinh{{\rm sinh}}

\allowdisplaybreaks
\title{The Cone of J-Hermitian Matrices and a Geometric Mean}
\author{Jose Franco}
\address{Department of Mathematics and Statistics\\ University of North Florida \\ 1 UNF Drive \\ Jacksonville \\ FL 32224 \\ USA}
\email{jose.franco@unf.edu}
\author{Allan Merino}
\address{Department of Mathematics and Statistics\\ University of North Florida \\ 1 UNF Drive \\ Jacksonville \\ FL 32224 \\ USA}
\email{allan.merino@unf.edu}

\keywords{Krein spaces, J-Hermitian Matrices, Geodesics, Geometric Mean, Quaternions}

\subjclass[2010]{Primary: 15A42; Secondary: 47A63.}

\date{}

\begin{document}

\begin{abstract}

We study the cone $\mathscr{P}_{\J}$ of positive $\J$-Hermitian matrices associated with an indefinite signature matrix $\J = \Id_{p,q}$. We show that the $\J$-exponential map is bijective and use it to analyze the algebraic and geometric structure of $\mathscr{P}_{\J}$. Through a canonical identification with the cone of positive definite matrices, we endow $\mathscr{P}_{\J}$ with a natural Riemannian structure. In this setting, we define a $\J$-geometric mean as the midpoint of geodesics and prove that it is uniquely characterized as the solution of a Riccati-type equation.

\end{abstract}

\maketitle

\tableofcontents

\section{Introduction}

The problem of defining the geometric mean of two positive definite matrices was first satisfactorily solved by Pusz and Woronowicz \cite{Pusz} by using functional calculus. They defined it by
\begin{equation*}
\A \sharp \B = \A^{\frac{1}{2}}\left(\A^{-\frac{1}{2}}\B\A^{-\frac{1}{2}}\right)^{\frac{1}{2}}\A^{\frac{1}{2}}\,.
\end{equation*}
Then from the systematic study of operator means by Kubo and Ando \cite{KuboAndo1980} the weighted version of this mean can naturally be defined for $t \in \left[0\,, 1\right]$ by
\begin{equation*}
\A \sharp_{t} \B = \A^{\frac{1}{2}}\left(\A^{-\frac{1}{2}}\B\A^{-\frac{1}{2}}\right)^{t}\A^{\frac{1}{2}}\,.
\end{equation*}
From that point on, this mean has been studied from many different points of view. Take for example the Ando-Hiai inequality \cite{AndoHiai1994} that states that
\begin{equation*}
\A \sharp_{t} \B \leq \Id \qquad \Longrightarrow \qquad \A^{p} \sharp_{t} \B^{p} \leq \Id\,, \qquad \left(p \geq 1\right)\,.
\end{equation*}
This inequality has several applications, from the Golden-Thompson inequality, to the Furuta Inequality.

\noindent In another direction, the weighted geometric means of positive definite matrices have been studied from the point of view of Riemannian geometry. In \cite{BHATIAH}, Bhatia and Holbrook establish the weighted geometric means as the geodesic curves connecting two points in the cone of positive definite matrices. Closely related to this viewpoint is the fact that CAT(0) spaces admit unique barycenters. In this sense, the geometric mean is realized as the barycenter 
\begin{equation*}
\A \sharp \B = \arg\min_{\X \geq 0}\left(\delta_{2}(\X\,, \A)^{2} + \delta_{2}(\X\,, \B)^{2}\right)\,.
\end{equation*}
This perspective leads naturally to multivariate geometric means, but this topic is not considered in this article. The interested reader is referred to \cite{LawsonLim2014} for more details\,.

\noindent Then there is the problem of extending the definition of the geometric means to other spaces beyond positive definite matrices. For example, Hiai and Kosaki extended the definition to positive $\tau$-measurable operators and positive elements in Haagerup's $\L^p$ spaces \cite{HiaiKosaki2021}. In the recent paper of Choi, Kim, and Lim \cite{ChoiKimLim2022}, they provide a binomial formula for the weighted geometric means of unipotent matrices. At this point, we reference the work in \cite{TRIO}, where they established a geometric mean for symmetric spaces of noncompact type through a Lie theoretic approach.  

\noindent Functional calculus is a key ingredient in these constructions, but it relies on the spectral theorem and hence on the self-adjointness of the operators. When operators are not Hermitian (or even normal), functional calculus is no longer available and the classical theory of means of positive operators is no longer viable. In many applications, however, this limitation is not restrictive, since operators are typically self-adjoint.

\noindent In relativistic or indefinite-metric settings, the relevant operators are often not self-adjoint in the usual Hilbert space sense. For instance, in Minkowski space, Lorentz-Hermitian operators play a role analogous to Hermitian operators in quantum theory. This motivates the development of alternative frameworks extending the theory of means beyond the positive definite case to the Krein spaces, a problem we address here for the geometric mean. This question has been studied in \cite{Dehghani}. However, they were unsuccessful in generalizing the theory to this setting via operator theoretical techniques. The failure of operator-theoretic techniques in the indefinite setting motivates a structural approach based on Lie-theoretic and representation-theoretic tools, which allows us to obtain explicit formulas and global geometric descriptions unavailable through classical functional calculus. The study of matrix analysis topics on Krein spaces is not new, see for example \cite{BebianoLemosProvJrSoares2012, BebianoLemosSoares2023,  BebianoLemosSoares2025} to name a few.

\medskip

Let $\mathbb{D} \in \left\{\mathbb{R}\,, \mathbb{C}\,, \mathbb{H}\right\}$, and let $\J$ be the matrix in $\Mat(n) := \Mat(n\times n\,, \mathbb{D})$ given by $\J = \diag\left(\Id_{p}\,, -\Id_{q}\right)$, with $p+q=n$. Using $\J$, we define an involution $\sharp$ on $\Mat(n)$ as
\begin{equation*}
\A \mapsto \A^{\sharp} := \J\A^{*}\J\,,
\end{equation*}
with $\A^{*} = \overline{\A}^{t}$, which plays the role of an adjoint in the indefinite setting. We denote by
\begin{equation*}
\mathfrak{p}_{\J} := \left\{\X \in \Mat(n)\,, \X^{\sharp}=\X\right\}
\end{equation*}
the real vector space of $\J$-Hermitian matrices, and let
\begin{equation*}
\mathfrak{p} = \left\{\X \in \Mat(n)\,, \X = \X^{*}\right\}\,.
\end{equation*}

\noindent Let $\mathscr{P}$ be the subset of $\mathfrak{p}$ given by
\begin{equation*}
\mathscr{P} := \left\{\X \in \mathfrak{p}\,, \X \text{ is positive}\right\}\,.
\end{equation*}
It is well-known that the exponential map $\exp: \mathfrak{p} \to \mathscr{P}$ is well-defined and bijective. In this paper, we define in $\mathfrak{p}_{\J}$ a cone that is analogue to $\mathscr{P}$. However, the restriction of the exponential map to $\mathfrak{p}_{\J}$ is not injective, so this is not how we proceed in our work\,.

\noindent Within $\mathfrak{p}_{\J}$, we distinguish the open cone
\begin{equation*}
\mathscr{P}_{\J} := \left\{\A \in \mathfrak{p}_{\J}\,, \J\A \text{ is positive}\right\}\,,
\end{equation*}
which we call the cone of $\J$-positive matrices (in $\mathfrak{p}_{\J}$). Although $\mathscr{P}_{\J}$ is not stable under multiplication, it enjoys several properties analogous to those of the classical cone $\mathscr{P} := \exp(\mathfrak{p})$ of positive definite Hermitian matrices. In particular, $\mathscr{P}_{\J}$ is stable under inversion, and the group $\GL(n)$ acts transitively on $\mathscr{P}_{\J}$ by congruence.

\noindent At this point, we would like to contrast our results to those obtained in \cite{TRIO}. While the space $\mathscr{P}_{\J}$ is isomorphic to a symmetric space of noncompact type, from the study in \cite{TRIO}, it is not clear how to proceed from a computational point of view. The clearest example of this is the quaternionic case, where the Riemannian metric arises from a reduced trace instead of the regular trace (see Appendix \ref{AppendixA}). Their construction relies on the classical exponential map at the Lie algebra level, whose global bijectivity fails in the indefinite setting when $\J \neq \Id_{n}$. Therefore, our work should be considered complementary to that of \cite{TRIO}\,.

\noindent A central role in this paper is played by the linear map
\begin{equation*}
\Phi_{\J}: \mathfrak{p}_{\J} \ni \X \longrightarrow \J\X \in \mathfrak{p}\,,
\end{equation*}
which restricts to a smooth bijection between $\mathscr{P}_{\J}$ and the classical positive cone $\mathscr{P}$. This identification allows us to transport both analytic and geometric structures from $\mathscr{P}$ to $\mathscr{P}_{\J}$. In particular, we define the $\J$-exponential map by
\begin{equation*}
\exp_{\J}(\X) := \J\exp(\J\X)\,, \qquad \left(\X \in \mathfrak{p}_{\J}\right)\,,
\end{equation*}
which yields a global diffeomorphism from $\mathfrak{p}_{\J}$ onto $\mathscr{P}_{\J}$. The bijectivity follows immediately from the bijectivity of the classical exponential $\exp$ on $\mathfrak{p}$ and the linear isomorphism $\Phi_{\J}$. Its inverse $\log_{\J}$ provides a natural framework to define geodesics and a Riemannian structure on $\mathscr{P}_{\J}$ by pullback from the classical geometry of $\mathscr{P}$.

\noindent On the set $\Mat(n)$, we denote by $\bullet$ the multiplication given by
\begin{equation*}
\X \bullet \Y = \X\J\Y\,, \qquad \left(\X\,, \Y \in \Mat(n)\right)\,.
\end{equation*}
This multiplication is associative and is such that $\X \bullet \J = \J \bullet \X = \J$. Moreover, a matrix $\A$ is invertible if and only if there exists a matrix $\B \in \Mat(n)$ such that $\A \bullet \B = \B \bullet \A = \J$, and we have $\B = \J\A^{-1}\J$. This multiplication allows us to define geodesics in a particularly simple form.

\noindent We denote by $\langle\cdot\,, \cdot\rangle$ the Riemannian metric on $\mathscr{P}$ given by
\begin{equation*}
\langle\U\,, \V\rangle_{\P} = \widetilde{\tr}(\P^{-1}\U\P^{-1}\V)\,, \qquad \qquad \left(\P \in \mathscr{P}\,, \U\,, \V \in \T_{\P}(\mathscr{P}) \cong \mathfrak{p}\right)\,,
\end{equation*}
where $\widetilde{\tr}: \Mat(n) \to \mathbb{R}$ is given by
\begin{equation*}
\widetilde{\tr}(\X) = \begin{cases} \tr(\X) & \text{ if } \mathbb{D} \in \left\{\mathbb{R}\,, \mathbb{C}\right\} \\ \Re(\tr(\X)) & \text{ if } \mathbb{D} = \mathbb{H} \end{cases}
\end{equation*}
and let $\omega := \Phi^{*}_{\J}\langle\cdot\,, \cdot\rangle$ be the pull-back of the Riemannian metric to $\mathscr{P}_{\J}$. In Section \ref{SectionFour}, we prove the following theorem\,.

\begin{theo}

Let $\A,\B\in \mathscr{P}_{\J}$. The map $\gamma: \left[0\,, 1\right] \to \mathscr{P}_{\J}$ given by
\begin{equation*}
\gamma(t) := \A^{\frac{1}{2}}_{\J} \bullet \left(\A^{-\frac{1}{2}}_{\J} \bullet \B \bullet \A^{-\frac{1}{2}}_{\J}\right)^{t}_{\J} \bullet \A^{\frac{1}{2}}_{\J}\,, \qquad \left(t \in \left[0\,, 1\right]\right)\,,
\end{equation*}
is the equation of the geodesic between $\A$ and $\B$ in $\mathscr{P}_{\J}$ with respect to $\omega$, where $\X^{t}_{\J}$ is the matrix in $\mathscr{P}_{\J}$ given by
\begin{equation*}
\X^{t}_{\J} = \exp_{\J}\left(t\log_{\J}(\X)\right)\,, \qquad \left(\X \in \mathscr{P}_{\J}\right)\,.
\end{equation*}

\end{theo}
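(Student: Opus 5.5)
The strategy is to transport everything to the classical cone $\mathscr{P}$ through $\Phi_{\J}$. Since $\omega=\Phi_{\J}^{*}\langle\cdot\,,\cdot\rangle$ and $\Phi_{\J}$ restricts to a diffeomorphism $\mathscr{P}_{\J}\to\mathscr{P}$, it is an isometry. Therefore $\gamma$ is the geodesic from $\A$ to $\B$ for $\omega$ if and only if $\J\gamma(t)$ is the geodesic from $\J\A$ to $\J\B$ in $\mathscr{P}$ for the trace metric. That classical geodesic is
\begin{equation*}
(\J\A)^{\frac12}\left((\J\A)^{-\frac12}(\J\B)(\J\A)^{-\frac12}\right)^{t}(\J\A)^{\frac12}
\end{equation*}
by Bhatia--Holbrook. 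In the quaternionic case the same holds with $\widetilde{\tr}$, since the metric is still $\GL(n)$-congruence invariant. So it suffices to prove the algebraic identity that $\J\gamma(t)$ equals this expression.

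\textbf{Step 1: the $\J$-power in classical terms.} For $\X\in\mathscr{P}_{\J}$ we have $\log_{\J}(\X)=\J\log(\J\X)$. Indeed, $\exp_{\J}(\J\log(\J\X))=\J\exp(\J\J\log(\J\X))=\J\exp(\log(\J\X))=\J\J\X=\X$, using $\J^2=\Id$. Hence
\begin{equation*}
\X^{t}_{\J}=\J\exp\left(t\log(\J\X)\right)=\J(\J\X)^{t},
\end{equation*}
so $\J\X^{t}_{\J}=(\J\X)^{t}$. This covers $t=\pm\frac12$ as well. I will also check that $\X^{-1}_{\J}=\J\X^{-1}\J$ is the $\bullet$-inverse, which is consistent with the introduction.

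\textbf{Step 2: unwind the $\bullet$-products.} Since $\X\bullet\Y=\X\J\Y$, we get
\begin{equation*}
\J(\X_1\bullet\cdots\bullet\X_k)=(\J\X_1)(\J\X_2)\cdots(\J\X_k).
\end{equation*}
Write $\P=\J\A$ and $\Q=\J\B$. By Step 1, $\C:=\A^{-\frac12}_{\J}\bullet\B\bullet\A^{-\frac12}_{\J}$ satisfies $\J\C=\P^{-\frac12}\Q\P^{-\frac12}$. This matrix is positive definite, so $\C\in\mathscr{P}_{\J}$ and $\C^{t}_{\J}$ is defined. Then
\begin{equation*}
\J\gamma(t)=\P^{\frac12}(\J\C^{t}_{\J})\P^{\frac12}=\P^{\frac12}(\P^{-\frac12}\Q\P^{-\frac12})^{t}\P^{\frac12},
\end{equation*}
which is exactly the classical geodesic. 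It also follows that $\gamma(t)\in\mathscr{P}_{\J}$, with $\gamma(0)=\A$ and $\gamma(1)=\B$.

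\textbf{Step 3: conclude and identify the obstacle.} Because $\Phi_{\J}$ is an isometry onto $\mathscr{P}$, $\gamma=\Phi_{\J}^{-1}\circ(\text{classical geodesic})$ is the (unique) geodesic joining $\A$ and $\B$. The main care point is the classical input in the quaternionic case: the geodesic formula and its uniqueness for the $\Re\tr$ metric on quaternionic positive matrices. I would handle it as follows. Congruence $\X\mapsto \mathrm{G}\X\mathrm{G}^{*}$ is an isometry, and so reduces the problem to the pair $(\Id,\Q)$. The curve $t\mapsto\exp(t\log\Q)$ is then a geodesic, either by a direct Euler--Lagrange computation or by the symmetric space structure $\GL(n,\mathbb{H})/\mathrm{Sp}(n)$, and uniqueness follows from nonpositive curvature. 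If the paper's appendix already records this, I will simply cite it.
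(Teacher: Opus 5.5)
Your proposal is correct and is essentially the paper's proof. You use that $\Phi_{\J}$ is an isometry by construction of $\omega$, the identity $\X^{t}_{\J}=\J(\J\X)^{t}$ from Proposition~\ref{PropositionSquareRoot}, and the unwinding $\J(\X_{1}\bullet\cdots\bullet\X_{k})=(\J\X_{1})\cdots(\J\X_{k})$ to show $\J\gamma(t)$ is the classical geodesic $\delta(t)$ between $\J\A$ and $\J\B$; the quaternionic input you worry about in Step~3 is already in the appendix (Theorem~\ref{MainTheorem}, proved via the embedding $\Psi$ into $\Mat(2n\,, \mathbb{C})$), so you can simply cite it.
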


\noindent Within this setting, we introduce a $\J$-geometric mean as the midpoint of geodesics in $\mathscr{P}_{\J}$, i.e.
\begin{equation*}
\A \sharp^{\J} \B := \A^{\frac{1}{2}}_{\J}\bullet\left(\A^{-\frac{1}{2}}_{\J}\bullet\B\bullet\A^{-\frac{1}{2}}_{\J}\right)^{\frac{1}{2}}_{\J}\bullet\A^{\frac{1}{2}}_{\J}\,, \qquad \left(\A\,, \B \in \mathscr{P}_{\J}\right)\,.
\end{equation*}
Our main results show that this mean admits an explicit closed formula paralleling the Pusz--Woronowicz construction and satisfies fundamental properties such as congruence invariance under $\GL(n)$, monotonicity with respect to the $\J$-Loewner order, and compatibility with the classical geometric mean under the identification $\Phi_{\J}$.

\noindent The theoretical framework is developed in Section~\ref{SectionOne} and the functional calculus is introduced in Section~\ref{SectionTwo}. In Section~\ref{SectionThree}, we introduce the $\J$-Loewner order in our case. In Section~\ref{SectionFour}, we study the geodesic curves on the cone of $\J$-positive matrices and arrive at the definition of the weighted geometric means of $\J$-positive matrices and we conclude by studying their fundamental properties in Section~\ref{SectionFive}. In Appendix~\ref{AppendixA}, we give a construction of the cone of positive quaternionic Hermitian matrices.

\section{The cone of J-Hermitian matrices}

\label{SectionOne}

Let $\mathbb{D} \in \left\{\mathbb{R}\,, \mathbb{C}\,, \mathbb{H}\right\}$, and let $\iota$ be the involution on $\mathbb{D}$ given by $\iota(\X) = \overline{\X}$; in particular, $\iota$ is trivial for $\mathbb{D} = \mathbb{R}$, and we have for $\mathbb{D} = \mathbb{H}$ (see Appendix \ref{AppendixA1}) that
\begin{equation*}
\iota(\X := a + ib + jc + kd) = a - ib - jc - kd\,, \qquad \left(\X \in \mathbb{H}\right)\,.
\end{equation*}

\noindent Let $\V$ be a right-$\mathbb{D}$-module with $\dim_{\mathbb{D}}(\V) = n$, and let $\B$ be a non-degenerate $\left(\iota\,, 1\right)$-hermitian form on $\V$, i.e.  for all $x\,, y\,, z \in \V$ and all $\alpha\,, \beta \in \mathbb{D}$
\begin{enumerate}
\item $\B(x\,, y\alpha + z\beta) = \B(x\,, y)\alpha + \B(x\,, z)\beta$\,,
\item $\B(x\alpha + y\beta\,, z) = \overline{\alpha}\B(x\,, z) + \overline{\beta}\B(y\,, z)$\,,
\item $\B(y\,, x) = \overline{\B(x\,, y)}$\,,
\item $\Rad(\B) := \left\{y\in \V\,,\ \B(x\,,y) = 0\,, \thinspace (\forall x \in \V)\right\} = \left\{0\right\}$\,.
\end{enumerate}

\noindent We denote by $\End(\V) := \End_{\mathbb{D}}(\V)$ the set of linear maps on $\V$, i.e.
\begin{equation*}
\End(\V) := \left\{\T: \V \to \V\,, \T(v_{1} + v_{2}\lambda) = \T(v_{1}) + \T(v_{2})\lambda\,, v_{1}\,, v_{2} \in \V\,, \lambda \in \mathbb{D}\right\}\,,
\end{equation*}
and by $\GL(\V)$ the set of invertible maps in $\End(\V)$\,. We denote by $\Mat(n) := \Mat(n \times n\,, \mathbb{D})$ the set of $n$ by $n$ matrices with entries in $\mathbb{D}$. For all $\T \in \End(\V)$ and any right-$\mathbb{D}$-basis $\mathscr{B}_{\V} = \left\{v_{1}\,, \ldots\,, v_{n}\right\}$ of $\V$, we denote by $\Mat_{\mathscr{B}_{\V}}(\T) := \left(t_{i, j}\right)_{1 \leq i, j \leq n}$ the corresponding matrix of $\Mat(n)$ given by 
\begin{equation*}
\T(v_{i}) = \sum\limits_{j = 1}^{n} t_{j,i}v_{j}\,, \qquad \left(1 \leq i\,, j \leq n\right)\,,
\end{equation*}
and satisfying $\left[\T(v)\right] = \A\left[v\right]$, where $\left[v\right] \in \mathbb{D}^{n}$ are the coordinates of $v$ in $\mathscr{B}_{\V}$. It is well-known that the map
\begin{equation}
\Gamma: \End(\V) \ni \T \mapsto \Mat_{\mathscr{B}_{\V}}(\T) \in \Mat(n)
\label{IsomorphismGamma}
\end{equation}
is bijective, and let $\GL(n) := \Gamma(\GL(\V))$\,.

\begin{remark}

\begin{enumerate}
\item The $\mathbb{R}$-linear involution $\iota$ can be extended to $\mathbb{D}^{n}$ and $\Mat(n)$ as follows: for $v = \left(v_{i}\right)_{1 \leq i \leq n} \in \mathbb{D}^{n}$ and $\A = \left(a_{i,j}\right)_{1 \leq i, j \leq n} \in \Mat(n)$, we define 
\begin{equation*}
\iota(v) = \left(\iota(v_{i})\right)_{1 \leq i \leq n}\,, \qquad \qquad \iota(\A) = \left(\iota(a_{i,j})\right)_{1 \leq i, j \leq n}\,.
\end{equation*}
Moreover, we have $\iota(\A v) = \iota(\A)\iota(v)$\,.
\item Let $\mathscr{B}_{\V} = \left\{v_{1}\,, \ldots\,, v_{n}\right\}$ be a right-$\mathbb{D}$-basis of $\V$ and let $\Omega$ be the matrix in $\Mat(n)$ given by $\Omega = \left(\B(v_{i}\,, v_{j})\right)_{1 \leq i\,, j \leq n}$. Using that the form $\B$ is hermitian, we get that
\begin{equation*}
\B(x\,, y) = \iota(x)^{t}\Omega y\,, \qquad \qquad \left(x\,, y \in \V\right)\,.
\end{equation*}
\end{enumerate}

\label{RemarkInvolution}

\end{remark}

\begin{notation}

For a matrix $\A \in \Mat(n)$ and a vector $v \in \mathbb{D}^{n}$, we denote by $\A^{*}$ and $v^{*}$ the elements in $\Mat(n)$ and $\mathbb{D}^{n}$ given by $\A^{*} = \iota(\A)^{t}$ and $v^{*} = \iota(v)^{t}$\,.

\end{notation}

\noindent We now recall the following classical theorem\,.

\begin{theo}[Sylvester's law of inertia]

Let $\V$ be a right $\mathbb{D}$-module of dimension $n$, and let
$\B$ be a non-degenerate $\left(\iota\,, 1\right)$-hermitian form on $\V$. Then there exist integers $p\,, q \geq 0$ with $p+q = n$ and a right $\mathbb{D}$-basis $\mathscr{B} = \left\{e_{1}\,, \ldots\,, e_{n}\right\}$ of $\V$ such that the  matrix $\Mat_{\mathscr{B}}\left(\B\right)$ of $\B$ in $\mathscr{B}$ is $\Id_{p\,, q} = \diag\left(\Id_{p}\,, -\Id_{q}\right)$\,.

\noindent Equivalently,
\begin{equation*}
\B(x\,, y) =\sum\limits_{k=1}^{p}\iota(x_{k})y_{k} - \sum_{k=p+1}^{p+q}\iota(x_{k})y_{k}\,, \qquad \left(x= \sum\limits_{k=1}^{n} e_{k} x_{k}\,, y = \sum\limits_{k=1}^{n} e_{k}y_{k}\right)\,,
\end{equation*}
where $x_{k}\,, y_{k} \in\mathbb{D}$\,. 

\label{TheoremClassificationHermitian}

\end{theo}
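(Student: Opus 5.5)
We prove Sylvester's law of inertia by induction on $n = \dim_{\mathbb{D}}(\V)$, building an orthogonal basis of vectors with $\B(e_{k}\,, e_{k}) = \pm 1$.

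\textbf{Existence of a non-isotropic vector.} Since $\B$ is non-degenerate and $\V \neq \{0\}$, some $x \in \V$ has $\B(x\,, x) \neq 0$. Suppose instead that $\B(x\,, x) = 0$ for all $x$. Polarize: from $\B(x+y\,, x+y) = 0$ we get $\B(x\,, y) + \overline{\B(x\,, y)} = 0$, so $\Re\,\B(x\,, y) = 0$ for all $x\,, y$. Replace $y$ by $y\alpha$ for arbitrary $\alpha \in \mathbb{D}$. This gives $\Re(\B(x\,, y)\alpha) = 0$ for all $\alpha$, hence $\B(x\,, y) = 0$, which contradicts $\Rad(\B) = \{0\}$. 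Over $\mathbb{R}$ take $\alpha$ real; there $\B(x\,, y) = \Re\,\B(x\,, y)$ directly. This polarization is the only point needing care in the noncommutative case, since sesquilinearity must be applied on the correct side: right-linear in the second variable.

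\textbf{Normalization.} Note that $\B(x\,, x) = \overline{\B(x\,, x)}$, so $c := \B(x\,, x)$ is a nonzero real number. Put $e = x|c|^{-1/2}$, using that $|c|^{-1/2}$ is a real scalar and central in $\mathbb{D}$. Then $\B(e\,, e) = |c|^{-1}c = \pm 1$.

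\textbf{Orthogonal splitting.} Let $\W = \{y \in \V\,, \B(e\,, y) = 0\}$. This is a right submodule, being the kernel of the right-linear functional $y \mapsto \B(e\,, y)$. Each $v \in \V$ decomposes as
\begin{equation*}
v = e\,\epsilon\,\B(e\,, v) + \left(v - e\,\epsilon\,\B(e\,, v)\right)\,, \qquad \epsilon = \B(e\,, e) = \pm 1\,,
\end{equation*}
and a one-line computation shows the second term lies in $\W$. The sum is direct because $\B(e\,, e\lambda) = \epsilon\lambda \neq 0$ for $\lambda \neq 0$. Therefore $\V = e\mathbb{D} \oplus \W$ with $\dim_{\mathbb{D}}\W = n-1$. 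The restriction $\B|_{\W}$ is still $(\iota\,, 1)$-hermitian and is non-degenerate: if $w \in \W$ were orthogonal to all of $\W$, it would also be orthogonal to $e$ by the hermitian symmetry, hence $w \in \Rad(\B) = \{0\}$.

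\textbf{Induction and reordering.} By induction, $\W$ has a basis in which $\B|_{\W}$ has diagonal matrix with entries $\pm 1$. Adjoining $e$ gives a basis of $\V$ whose Gram matrix $\Omega = (\B(e_{i}\,, e_{j}))$ is diagonal with entries $\pm 1$. Reorder the basis so that the $+1$ entries come first. Let $p$ be the number of $+1$ entries and $q = n - p$. By Remark \ref{RemarkInvolution}, $\B(x\,, y) = \iota(x)^{t}\Omega y$ with $\Omega = \Id_{p,q}$, which is exactly the displayed coordinate formula.

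Uniqueness of $(p\,, q)$ is not asserted in the statement as written, so it is not needed. If wanted, it follows because $p$ is the maximal dimension of a subspace on which $\B$ is positive definite.
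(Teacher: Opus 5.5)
Your proof is correct and complete. The paper gives no argument of its own here, only a citation to \cite{SCHARLAU}, and your induction is the standard diagonalization argument for hermitian forms over a division ring with involution on which that reference relies: you find a non-isotropic vector by polarization, normalize it by a real central scalar, split off its orthogonal complement, and reorder the basis. Your version has the advantage of being self-contained, and it explicitly checks the points that matter when $\mathbb{D} = \mathbb{H}$: right-linearity is applied on the correct side in the polarization step, $\B(x\,, x)$ is real, and the scaling factor $\left|c\right|^{-1/2}$ is central.
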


\begin{proof}

A proof can be found in \cite{SCHARLAU}\,.

\end{proof}

\begin{remark}

Let $\T \in \End(\V)$. Using that the form $\B$ is non-degenerate, there exists a unique element $\T^{\diamond} \in \End(\V)$ such that
\begin{equation}
\B(\T(v_{1})\,, v_{2}) = \B(v_{1}\,, \T^{\diamond}(v_{2}))\,, \qquad \left(v_{1}\,, v_{2} \in \V\right)\,.
\label{TDiamond}
\end{equation}
The element $\T^{\diamond} \in \End(\V)$ is known as the adjoint of $\T$ with respect to $\B$. Moreover, the corresponding map
\begin{equation*}
\End(\V) \ni \T \to \T^{\diamond} \in \End(\V)
\end{equation*}
defines an $\mathbb{R}$-linear involution on $\End(\V)$.

\end{remark}

\noindent Let $\mathscr{B}$ be a basis of $\V$ such that $\Mat_{\mathscr{B}}(\B) = \Id_{p\,, q}$. To simplify the notations, we denote by $\J$ the matrix $\Id_{p\,, q}$, i.e.
\begin{equation*}
\J = \begin{bmatrix} \Id_{p} & 0 \\ 0 & -\Id_{q} \end{bmatrix}\,.
\end{equation*}
We denote by $\U_{\J}$ the group of isometries of $\left(\V\,, \B\right)$, i.e. the real Lie group given by
\begin{equation*}
\U_{\J} := \left\{g \in \GL(\V)\,, \B(g(v_{1})\,, g(v_{2})) = \B(v_{1}\,, v_{2})\,, \left(v_{1}\,, v_{2} \in \V\right)\right\}\,.
\end{equation*}
We denote by $\mathfrak{u}_{\J}$ the Lie algebra of $\U_{\J}$. In particular, we get
\begin{equation*}
\mathfrak{u}_{\J} = \left\{\X \in \End(\V)\,, \B(\X(v_{1})\,, v_{2}) + \B(v_{1}\,, \X(v_{2})) = 0\,, \left(\forall v_{1}\,, v_{2} \in \V\right)\right\}\,.
\end{equation*}
Similarly, we denote by $\mathfrak{p}_{\J}$ the subset of $\End(\V)$ given by
\begin{equation*}
\mathfrak{p}_{\J} := \left\{\X \in \End(\V)\,, \B(\X(v_{1})\,, v_{2}) - \B(v_{1}\,, \X(v_{2})) = 0\,, \left(\forall v_{1}\,, v_{2} \in \V\right)\right\}\,.
\end{equation*}
We have
\begin{equation*}
\End(\V) = \mathfrak{u}_{\J} \oplus \mathfrak{p}_{\J}\,,
\end{equation*}
and if we denote by $\left[\cdot\,, \cdot\right]$ the Lie bracket on $\End(\V)$ (i.e. $\left[\X\,, \Y\right] = \X \circ \Y - \Y \circ \X$ for all $\X\,, \Y \in \End(\V)$), one can see that 
\begin{equation*}
\left[\mathfrak{u}_{\J}\,, \mathfrak{u}_{\J}\right] \subseteq \mathfrak{u}_{\J}\,, \qquad \left[\mathfrak{u}_{\J}\,, \mathfrak{p}_{\J}\right] \subseteq \mathfrak{p}_{\J}\,, \qquad \left[\mathfrak{p}_{\J}\,, \mathfrak{p}_{\J}\right] \subseteq \mathfrak{u}_{\J}\,.
\end{equation*}

\begin{remark}

\begin{enumerate}
\item We identify $\V$ with $\mathbb{D}^{n}$ by replacing $v \in \V$ with its coordinates in the basis $\mathscr{B}$. Therefore, the form $\B$ on $\V$ gives rise to a non-degenerate $\left(\iota\,, 1\right)$-hermitian form $\B$ on $\mathbb{D}^{n}$\,.
\item Let $\mathscr{B}$ be a basis of $\V$ such that $\Mat_{\mathscr{B}}(\B) = \J$. Using the isomorphism \eqref{IsomorphismGamma}, we can write $\mathfrak{u}_{\J}$ and $\mathfrak{p}_{\J}$ as subsets of $\Mat(n)$. Indeed, using that $\B(u\,, v) = u^{*}\J v$ for all $u\,, v \in  \V$, we get 
\begin{eqnarray*}
\mathfrak{u}_{\J} & = & \left\{\X \in \Mat(n)\,, \left(\X v_{1}\right)^{*}\J v_{2} + v^{*}_{1}\J\left(\X v_{2}\right) = 0\,, \left(v_{1}\,, v_{2} \in \mathbb{D}^{n}\right)\right\} \\ & = & \left\{\X \in \Mat(n)\,, v^{*}_{1}\left(\X^{*}\J + \J\X\right)v_{2} = 0\,, \left(v_{1}\,, v_{2} \in \mathbb{D}^{n}\right)\right\} \\ 
& = & \left\{\X \in \Mat(n)\,, \X^{*}\J = -\J\X\right\}\,.
\end{eqnarray*}
Similarly, we have 
\begin{equation*}
\mathfrak{p}_{\J} = \left\{\X \in \Mat(n)\,, \X^{*}\J = \J\X\right\}\,.
\end{equation*}
The set $\mathfrak{p}_{\J}$ is known as the set of $\J$-hermitian matrices. Moreover, we have
\begin{equation*}
\U_{\J} = \left\{g \in \GL(n)\,, g^{*}\J g = \J\right\}\,.
\end{equation*}
\item For all $\X \in \Mat(n)$, we denote by $\X^{\sharp}$ the matrix in $\Mat(n)$ given by 
\begin{equation*}
\X^{\sharp} = \J\X^{*}\J\,.
\end{equation*}
One can easily see that for all $\X\,, \Y \in \Mat(n)$, we have $\left(\X^{\sharp}\right)^{\sharp} = \X$ and $\left(\X\Y\right)^{\sharp} = \Y^{\sharp}\X^{\sharp}$. Therefore, the map
\begin{equation*}
\Mat(n) \ni \X \to \X^{\sharp} \in \Mat(n)
\end{equation*}
is an $\mathbb{R}$-linear involution. Moreover, one can see that for all $\T \in \End(\V)$, we have
\begin{equation*}
\Mat_{\mathscr{B}}(\T^{\diamond}) = \Mat_{\mathscr{B}}(\T)^{\sharp}\,,
\end{equation*}
where $\T^{\diamond}$ was defined in Equation \eqref{TDiamond}\,.
\item We can rewrite the subsets $\mathfrak{u}_{\J}\,, \mathfrak{p}_{\J}\,,$ and $\U_{\J}$ of $\Mat(n)$ using the involution $\sharp$ defined above. Indeed, we have
\begin{equation*}
\mathfrak{u}_{\J} = \left\{\X \in \Mat(n)\,, \X^{\sharp} = -\X\right\}\,, \qquad \mathfrak{p}_{\J} = \left\{\X \in \Mat(n)\,, \X^{\sharp} = \X\right\}\,,
\end{equation*}
and 
\begin{equation*}
\U_{\J} = \left\{g \in \GL(n)\,, g^{\sharp}g = \Id_{n}\right\}\,.
\end{equation*}
\item Let $\B_{\J}$ be the form on $\mathbb{D}^{n}$ given by
\begin{equation*}
\B_{\J}(u_{1}\,, u_{2}) = \B(\J u_{1}\,, u_{2})\,, \qquad \left(u_{1}\,, u_{2} \in \mathbb{D}^{n}\right)\,.
\end{equation*}
The form $\B_{\J}$ is $(\iota\,, 1)$-hermitian and positive. Indeed, for all $u \in \mathbb{D}^{n}$, we have
\begin{equation*}
\B_{\J}(u\,, u) = \B(\J u\,, u) = \left(\J u\right)^{*}\J u = u^{*} \J^{2} u = u^{*}u = \sum\limits_{i = 1}^{n} \left|u_{i}\right|^{2}\,,
\end{equation*}
i.e. $\B_{\J}(u\,, u) > 0$ for all non-zero vector $u \in \mathbb{D}^{n}$\,.
\item Let $\mathfrak{u}\,, \mathfrak{p}$ be the subsets of $\Mat(n)$ respectively given by
\begin{equation*}
\mathfrak{u} = \left\{\X \in \Mat(n)\,, \B_{\J}(\X u_{1}\,, u_{2}) + \B_{\J}(u_{1}\,, \X u_{2}) = 0\,, \left(u_{1}\,, u_{2} \in \mathbb{D}^{n}\right)\right\}
\end{equation*}
and 
\begin{equation*}
\mathfrak{p} = \left\{\X \in \Mat(n)\,, \B_{\J}(\X u_{1}\,, u_{2}) - \B_{\J}(u_{1}\,, \X u_{2}) = 0\,, \left(u_{1}\,, u_{2} \in \mathbb{D}^{n}\right)\right\}\,.
\end{equation*}
As explained above, we have
\begin{equation*}
\mathfrak{u} = \left\{\H \in \Mat(n)\,, \H = -\H^{*}\right\}\,, \qquad \mathfrak{p} = \left\{\H \in \Mat(n)\,, \H = \H^{*}\right\}\,,
\end{equation*}
and 
\begin{equation*}
\left[\mathfrak{u}\,, \mathfrak{u}\right] \subseteq \mathfrak{u}\,, \qquad \left[\mathfrak{u}\,, \mathfrak{p}\right] \subseteq \mathfrak{p}\,, \qquad \left[\mathfrak{p}\,, \mathfrak{p}\right] \subseteq \mathfrak{u}\,.
\end{equation*}
\end{enumerate}
\noindent The set $\mathfrak{p}$ is known as the set of Hermitian matrices. Similarly, we denote by $\U$ the real Lie group given by
\begin{equation*}
\U = \left\{g \in \GL(n)\,, \B_{\J}(g u_{1}\,, g u_{2}) = \B_{\J}(u_{1}\,, u_{2})\,, \left(u_{1}\,, u_{2} \in \mathbb{D}^{n}\right)\right\}\,.
\end{equation*}
i.e.
\begin{equation*}
\U = \left\{g \in \GL(n)\,, g^{*}g = \Id_{n}\right\}\,.    
\end{equation*}
Using the notations of \cite{KNAPP}, we have
\begin{equation*}
\U_{\J} = \begin{cases} \O(p\,, q) & \text{ if } \mathbb{D} = \mathbb{R} \\ \U(p\,, q) & \text{ if } \mathbb{D} = \mathbb{C} \\ \Sp(p\,, q) & \text{ if } \mathbb{D} = \mathbb{H} \end{cases}\,, \qquad \U = \begin{cases} \O(n) & \text{ if } \mathbb{D} = \mathbb{R} \\ \U(n) & \text{ if } \mathbb{D} = \mathbb{C} \\ \Sp(n) & \text{ if } \mathbb{D} = \mathbb{H}\end{cases}\,.
\end{equation*}
Therefore, $\U_{\J}$ is connected if and only if $\mathbb{D} \in \left\{\mathbb{C}\,, \mathbb{H}\right\}$, and compact if and only if $\J = \Id_{n}$\,.
\end{remark}

\begin{definition}

Let $\A \in \Mat(n)$\,. We say that $\A$ is positive if $\B_{\J}(\A u\,, u) > 0$ for all non-zero vector $u \in \mathbb{D}^{n}$\,.

\end{definition}

\noindent It is easy to see that if $\A$ is positive, then $0 \notin \Spec(\A)$. We denote by $\mathscr{P}$ the set of positive hermitian matrices, i.e.
\begin{equation*}
\mathscr{P} := \left\{\X \in \mathfrak{p}\,, \X \text{ is positive}\right\}\,.
\end{equation*}

\begin{notation}

If a matrix $\X \in \Mat(n)$ is positive, we write $\X > 0$\,.

\end{notation}

\noindent We denote by $\exp$ the exponential map on $\Mat(n)$, i.e. the map given by
\begin{equation*}
\Mat(n) \ni \A \to \exp(\A) := \sum\limits_{k = 0}^{\infty} \frac{\A^{k}}{k!} \in \GL(n)\,.
\end{equation*}
It is known (see \cite[Chapter~1]{BHATIA}) that the restriction of $\exp$ to $\mathfrak{p}$ is injective and such that $\exp(\mathfrak{p}) = \mathscr{P}$. In other words, 
\begin{equation*}
\exp: \mathfrak{p} \to \mathscr{P}
\end{equation*}
is bijective.

\begin{remark}

The goal is now to define a cone in $\mathfrak{p}_{\J}$ that is analogue to the cone $\mathscr{P}$ in $\mathfrak{p}$. It is natural to try to define $\mathscr{P}_{\J} \subset \mathfrak{p}_{\J}$ in direct analogy with the classical cone $\mathscr{P} \subset \mathfrak{p}$, namely as the image of $\mathfrak{p}_{\J}$ under the exponential map. However, this approach fails: the restriction of the exponential map to $\mathfrak{p}_{\J}$ is not injective, and hence cannot be bijective onto its image. Indeed, let $\X$ be the matrix in $\Mat(2)$ given by
\begin{equation*}
\X = \begin{bmatrix} 0 & 2i\pi \\ 2i\pi & 0 \end{bmatrix}\,.
\end{equation*}
For $\J = \Id_{1\,, 1}$, we have
\begin{equation*}
\J\X^{*}\J = \begin{bmatrix} 1 & 0 \\ 0 & -1 \end{bmatrix}\begin{bmatrix} 0 & -2i\pi \\ -2i\pi & 0 \end{bmatrix}\begin{bmatrix} 1 & 0 \\ 0 & -1 \end{bmatrix} = \begin{bmatrix} 0 & 2i\pi \\ 2i\pi & 0 \end{bmatrix} = \X
\end{equation*}
i.e. $\X \in \mathfrak{p}_{\J}$. Using that $\exp(\X) = \Id_{2} = \exp(0)$, we get that the exponential map is not injective on $\mathfrak{p}_{\J}$. Consequently, the exponential map cannot be used to parametrize a positive cone inside $\mathfrak{p}_{\J}$, and a different construction adapted to the $\J$-structure is required. 

\end{remark}

\noindent The following lemma is key in our paper: it shows that $\mathfrak{p}_{\J}$ can be canonically identified with the space of Hermitian matrices\,.

\begin{lemma}

The map
\begin{equation*}
\Phi_{\J}: \mathfrak{p}_{\J} \ni \X \to \J\X \in \mathfrak{p}
\end{equation*}
is well-defined and bijective\,.

\end{lemma}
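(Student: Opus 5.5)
The plan is to verify well-definedness directly from the matrix descriptions of $\mathfrak{p}_{\J}$ and $\mathfrak{p}$ given in the preceding remark, and then to exhibit an explicit inverse. The inverse will be left multiplication by $\J$ itself, which works because $\J^{2} = \Id_{n}$.

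\emph{Well-definedness.} Let $\X \in \mathfrak{p}_{\J}$, so that $\X^{*}\J = \J\X$. Because $\J$ is a real diagonal matrix with entries $\pm 1$, it satisfies $\J^{*} = \J$. This holds over $\mathbb{H}$ as well, since $\iota$ fixes real scalars and transposition fixes diagonal matrices. One also has $(\A\B)^{*} = \B^{*}\A^{*}$ for all $\A, \B \in \Mat(n)$. This identity is standard over $\mathbb{R}$ and $\mathbb{C}$. Over $\mathbb{H}$ it follows from $\overline{ab} = \overline{b}\,\overline{a}$ for quaternions, so the order reversal is compensated entrywise; this is the only point needing care in the noncommutative case. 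Hence
\begin{equation*}
(\J\X)^{*} = \X^{*}\J^{*} = \X^{*}\J = \J\X\,,
\end{equation*}
so $\J\X \in \mathfrak{p}$. Clearly $\Phi_{\J}$ is $\mathbb{R}$-linear.

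\emph{Bijectivity.} Define $\Psi: \mathfrak{p} \ni \H \mapsto \J\H \in \Mat(n)$. For $\H = \H^{*}$, the same product rule gives
\begin{equation*}
(\J\H)^{*}\J = \H^{*}\J\J = \H = \J(\J\H)\,,
\end{equation*}
using $\J^{2} = \Id_{n}$. Therefore $\J\H \in \mathfrak{p}_{\J}$. Moreover, $\Psi \circ \Phi_{\J}(\X) = \J^{2}\X = \X$ and $\Phi_{\J}\circ\Psi(\H) = \H$. So $\Psi$ is a two-sided inverse and $\Phi_{\J}$ is bijective, indeed an $\mathbb{R}$-linear isomorphism.

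There is no real obstacle in this argument. The only thing to check explicitly is the rule $(\A\B)^{*} = \B^{*}\A^{*}$ in the quaternionic case, together with $\J^{*} = \J$.
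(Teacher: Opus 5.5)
Your proof is correct and follows the paper's argument. Both check $(\J\X)^{*} = \J\X$ directly from the defining relation of $\mathfrak{p}_{\J}$, and both use $\J^{2} = \Id_{n}$ to show that left multiplication by $\J$ sends $\mathfrak{p}$ back into $\mathfrak{p}_{\J}$ and inverts $\Phi_{\J}$. The differences are cosmetic: you package injectivity and surjectivity as a single two-sided inverse, and you state explicitly the rule $(\A\B)^{*} = \B^{*}\A^{*}$ over $\mathbb{H}$, which the paper uses tacitly.
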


\begin{proof}

We first check that $\Phi_{\J}(\X)=\J\X$ maps $\mathfrak{p}_{\J}$ into $\mathfrak{p}$. Let $\H \in \mathfrak{p}_{\J}$, so $\H^{*}=\J\H\J$. Then
\begin{equation*}
\left(\J\H\right)^{*} = \H^{*}\J^{*} = \H^{*}\J = \left(\J\H\J\right)\J = \J\H\J^{2} = \J\H\,,
\end{equation*}
hence $\J\H\in\mathfrak{p}$.
 The map $\Phi_{\J}$ is injective since $\J$ is invertible. To prove surjectivity, let $\P\in\mathfrak{p}$ and set $\X : =\J\P$. Then
\begin{equation*}
\X^{*} = \left(\J\P\right)^{*} = \P^{*}\J^{*} = \P\J\,,
\qquad \J\X\J = \J\left(\J\P\right)\J = \P\J\,.
\end{equation*}
Therefore $\X^{*} = \J\X\J$, i.e. $\X \in \mathfrak{p}_{\J}$, and
\begin{equation*}
\Phi_{\J}(\X) = \J\X = \J\left(\J\P\right) = \P\,.
\end{equation*}
This proves that $\Phi_{\J}$ is surjective, hence bijective\,.

\end{proof}

\begin{remark}

The map $\Phi_{\J}: \Mat(n) \to \Mat(n)$ given by $\Phi_{\J}(\X) = \J\X$ is such that $\Phi_{\J}(\mathfrak{u}_{\J}) = \mathfrak{u}$ and $\Phi_{\J}(\mathfrak{p}_{\J}) = \mathfrak{p}$. However, the map $\Phi_{\J}$ is not a homomorphism of Lie algebras\,.

\end{remark}

\noindent In particular, we get a bijective map 
\begin{equation}
\exp \circ \Phi_{\J}: \mathfrak{p}_{\J} \to \mathscr{P}\,.
\label{FirstBijection}
\end{equation}
Let $\Gamma_{\J}: \mathscr{P} \to \Mat(n)$ be the map given by
\begin{equation*}
\Gamma_{\J}(\X) = \J\X\,, \qquad \qquad \left(\X \in \mathscr{P}\right)\,,
\end{equation*}
and let $\mathscr{P}_{\J} := \Im(\Gamma_{\J})$. By construction, the map $\Gamma_{\J}: \mathscr{P} \to \mathscr{P}_{\J}$ is bijective, and we denote by $\exp_{\J}$ the map given by
\begin{equation*}
\begin{tikzcd}[row sep=3.2em, column sep=4.2em]
\mathfrak{p}_{\J} \arrow[r, "\Phi_{\J}"] \arrow[d, "\exp_{\J}"'] 
  & \mathfrak{p} \arrow[d, "\exp"] \\
\mathscr{P}_{\J}  & \mathscr{P} \arrow[l, "\Gamma_{\J}"']
\end{tikzcd}
\end{equation*}
In particular, we have
\begin{equation}
\exp_{\J}(\X) = \J\exp(\J\X)\,, \qquad \left(\X \in \mathfrak{p}_{\J}\right)\,.
\label{JExponentialMap}
\end{equation}

\noindent We now give a description of the set $\mathscr{P}_{\J} := \exp_{\J}(\mathfrak{p}_{\J})$. We first introduce the definition of $\J$-positive hermitian matrices\,.

\begin{definition}

We say that a matrix $\H \in \mathfrak{p}_{\J}$ is $\J$-positive if $\B(\H x\,, x) > 0$ for all non-zero $x$ in $\mathbb{D}^{n}$\,.

\end{definition}

\begin{remark}

\begin{enumerate}
\item In other words, we say that $\H \in \mathfrak{p}_{\J}$ is $\J$-positive if $x^{*}\H^{*}\J x > 0$ for all non-zero $x \in \mathbb{D}^{n}$\,.
\item A matrix $\H$ is in $\mathscr{P}_{\J}$ if and only if $\J\H \in \mathscr{P}$\,.
\end{enumerate}

\label{RemarkPositiveMatrix}

\end{remark}

\noindent The following proposition gives a nice parametrization of $\J$-positive matrices\,.

\begin{proposition}

Let $\H \in \Mat(n)$\,.
\begin{enumerate}
\item The matrix $\H$ is $\J$-Hermitian (i.e. $\H^{\sharp} = \H$, equivalently $\H^{*} = \J\H\J$) if and only if, with respect to the decomposition $\mathbb{D}^{n} = \mathbb{D}^{p} \oplus \mathbb{D}^{q}$, it can be written as
\begin{equation}
\label{EquationBlockHermitian}
\H = \begin{bmatrix} \A & \B \\ -\B^{*} & \D \end{bmatrix}\,,
\end{equation}
where $\A \in \Mat(p)$ and $\D \in \Mat(q)$ are Hermitian (i.e. $\A^{*} = \A$ and $\D^{*} = \D$), and $\B \in \Mat(p \times q)$ is arbitrary\,.

\item If, in addition, $\H$ is $\J$-positive (i.e. $\J\H>0$), then $\A$ is positive, $\D$ is negative (i.e. $(-\D) > 0$), and the following Schur complement conditions hold:
\begin{equation*}
(-\D) - \B^{*}\A^{-1}\B > 0\,.
\end{equation*}
\end{enumerate}

\end{proposition}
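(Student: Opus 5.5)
Write $\H$ in block form with respect to $\mathbb{D}^{n} = \mathbb{D}^{p} \oplus \mathbb{D}^{q}$ as
\begin{equation*}
\H = \begin{bmatrix} \A & \B \\ \C & \D \end{bmatrix}\,,
\end{equation*}
and compute both sides of the defining identity $\H^{*} = \J\H\J$ blockwise. Since $\J = \diag(\Id_{p}, -\Id_{q})$, conjugating by $\J$ keeps the diagonal blocks and negates the off-diagonal ones:
\begin{equation*}
\J\H\J = \begin{bmatrix} \A & -\B \\ -\C & \D \end{bmatrix}\,, \qquad \H^{*} = \begin{bmatrix} \A^{*} & \C^{*} \\ \B^{*} & \D^{*} \end{bmatrix}\,.
\end{equation*}
Comparing blocks gives $\A^{*} = \A$, $\D^{*} = \D$, $\C^{*} = -\B$ and $\B^{*} = -\C$. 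The last two conditions are equivalent, since $\iota$ is an involution and hence $(\X^{*})^{*} = \X$ for quaternionic matrices too. This is exactly the form \eqref{EquationBlockHermitian}. The converse follows by reading the same computation backwards, which proves part (1).

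For part (2), by Remark \ref{RemarkPositiveMatrix} the hypothesis means $\J\H > 0$. Multiplying the block form by $\J$ gives
\begin{equation*}
\J\H = \begin{bmatrix} \A & \B \\ \B^{*} & -\D \end{bmatrix}\,,
\end{equation*}
which is Hermitian, consistent with the Lemma on $\Phi_{\J}$. Testing positivity on vectors $u = (x, 0)$ with $x \in \mathbb{D}^{p}$ nonzero gives $x^{*}\A x > 0$, so $\A > 0$. Testing on $u = (0, y)$ gives $y^{*}(-\D)y > 0$, so $-\D > 0$.

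For the Schur complement, $\A$ is invertible because it is positive (the paper notes that $0 \notin \Spec$ of a positive matrix). Use the congruence
\begin{equation*}
\L = \begin{bmatrix} \Id_{p} & -\A^{-1}\B \\ 0 & \Id_{q} \end{bmatrix}\,, \qquad \L^{*}(\J\H)\L = \begin{bmatrix} \A & 0 \\ 0 & (-\D) - \B^{*}\A^{-1}\B \end{bmatrix}\,.
\end{equation*}
The only computation to check is that the off-diagonal blocks vanish, using $(\A^{-1})^{*} = \A^{-1}$. Since $\L$ is invertible, $\L^{*}(\J\H)\L$ is again positive: $u^{*}\L^{*}(\J\H)\L u = (\L u)^{*}(\J\H)(\L u) > 0$ whenever $u \neq 0$. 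Restricting this positivity to vectors $(0, y)$ gives $(-\D) - \B^{*}\A^{-1}\B > 0$.

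The main obstacle is the quaternionic case, where noncommutativity requires care. Two identities must be used only in the legitimate order-reversing form:
\begin{equation*}
(\X\Y)^{*} = \Y^{*}\X^{*}\,, \qquad (\A^{-1})^{*} = (\A^{*})^{-1}\,.
\end{equation*}
Quadratic forms $u^{*}\M u$ are real for Hermitian $\M$, so the inequalities make sense. All steps are matrix identities or vector evaluations that respect these rules, so the argument goes through uniformly for $\mathbb{D} \in \{\mathbb{R}, \mathbb{C}, \mathbb{H}\}$.
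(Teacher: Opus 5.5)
Your proof is correct and follows the paper's argument. Part (1) is the same blockwise comparison of $\H^{*}$ with $\J\H\J$, and part (2) likewise reads off $\A>0$ and $-\D>0$ from the diagonal blocks of $\J\H$ before passing to the Schur complement. The only difference is that the paper cites the Schur complement criterion from Horn--Johnson, whereas you prove the direction actually needed via the explicit congruence $\L^{*}(\J\H)\L$; this makes the quaternionic case self-contained.
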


\begin{proof}
\begin{enumerate}
\item Let $\H \in \mathfrak{p}_{\J}$. We write $\H$ in block form relative to the decomposition $\mathbb{D}^{n} = \mathbb{D}^{p} \oplus \mathbb{D}^{q}$, i.e.
\begin{equation*}
\H = \begin{bmatrix} \H_{1, 1} & \H_{1, 2} \\ \H_{2, 1} & \H_{2, 2}\end{bmatrix}\,,
\end{equation*}
with $\H_{1, 1} \in \Mat(p)\,, \H_{1, 2} \in \Mat(p \times q)\,, \H_{2, 1} \in \Mat(q \times p)\,,$ and $\H_{2, 2} \in \Mat(q)$.
Then
\begin{equation*}
\H^{*} = \begin{bmatrix} \H^{*}_{1, 1} & \H^{*}_{2, 1} \\ \H^{*}_{1, 2} & \H^{*}_{2, 2}\end{bmatrix}\,,
\end{equation*}
and 
\begin{equation*}
\J\H\J = \begin{bmatrix} \Id_{p} & 0 \\ 0 & -\Id_{q}\end{bmatrix} \begin{bmatrix} \H_{1, 1} & \H_{1, 2} \\ \H_{2, 1} & \H_{2, 2}\end{bmatrix} \begin{bmatrix} \Id_{p} & 0 \\ 0 & -\Id_{q}\end{bmatrix}
= \begin{bmatrix} \H_{1, 1} & -\H_{1, 2} \\ -\H_{2, 1} & \H_{2,2}\end{bmatrix}\,.
\end{equation*}
Hence the condition $\H^{*} = \J\H\J$ is equivalent to the system of equalities
\begin{equation*}
\H^{*}_{1, 1} = \H_{1, 1}\,, \qquad \H^{*}_{2, 2} = \H_{2, 2}\,, \qquad \H^{*}_{2, 1} = -\H_{1, 2}\,, \qquad \H^{*}_{1, 2} = -\H_{2, 1}\,.
\end{equation*}
Therefore, $\H_{1, 1} = \H^{*}_{1, 1}\,, \H_{2, 2} = \H^{*}_{2, 2}\,,$ and $\H_{2, 1} = -\H^{*}_{1, 2}$. Conversely, any matrix of the form \eqref{EquationBlockHermitian} clearly satisfies $\H^{*} = \J\H\J$, proving (1)\,.
\item Assume now that $\H$ is $\J$-positive, i.e. $\J\H > 0$. Using the block form obtained in (1), we get 
\begin{equation*}
\J\H = \begin{bmatrix} \Id_{p} & 0 \\ 0 & -\Id_{q} \end{bmatrix} \begin{bmatrix} \A & \B \\ -\B^{*} & \D \end{bmatrix} = \begin{bmatrix} \A & \B \\ \B^{*} & -\D\end{bmatrix}\,.
\end{equation*}
This matrix is Hermitian. Since $\J\H>0$, its leading principal block must be positive, hence $\A > 0$. Likewise, the trailing principal block must be positive definite, hence $-\D > 0$, i.e.\ $\D < 0$. Finally, since $\A > 0$ is invertible, the Schur complement criterion for positive definiteness (see \cite{HORN}) applied to the Hermitian block matrix $\J\H$ gives
\begin{equation*}
\J\H > 0 \qquad \Longleftrightarrow \qquad \A > 0 \quad \text{ and } \quad \left(-\D\right) -\B^{*}\A^{-1}\B > 0\,.
\end{equation*}
This proves (2)\,.
\end{enumerate}

\end{proof}

\begin{proposition}

The set $\mathscr{P}_{\J}$ is the set of $\J$-positive matrices in $\mathfrak{p}_{\J}$\,.

\end{proposition}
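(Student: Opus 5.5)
We prove the two inclusions separately. Recall that by definition $\mathscr{P}_{\J} = \Im(\Gamma_{\J}) = \left\{\J\X\,, \X \in \mathscr{P}\right\}$, which by the commutative diagram also equals $\exp_{\J}(\mathfrak{p}_{\J})$. This uses the bijectivity of $\exp:\mathfrak{p}\to\mathscr{P}$ and of $\Phi_{\J}$ from the earlier Lemma. A matrix $\H \in \mathfrak{p}_{\J}$ is $\J$-positive when $\B(\H x\,, x) > 0$ for all non-zero $x$. Since $\B(u\,, v) = u^{*}\J v$, this reads $x^{*}\H^{*}\J x > 0$.

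\textbf{Inclusion $\mathscr{P}_{\J}$ into the $\J$-positive matrices.} Let $\H = \J\X$ with $\X \in \mathscr{P}$.
\begin{enumerate}
\item First, $\H \in \mathfrak{p}_{\J}$. Indeed, $\X \in \mathfrak{p}$, so the surjectivity part of the Lemma on $\Phi_{\J}$ applies: $\H^{*} = \X\J = \J(\J\X)\J = \J\H\J$.
\item Next, $\H^{*}\J = \X\J\J = \X$. Hence $x^{*}\H^{*}\J x = x^{*}\X x = \B_{\J}(\X x\,, x) > 0$ for $x \neq 0$, because $\X$ is positive.
\end{enumerate}
So $\H$ is $\J$-positive.

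\textbf{Converse inclusion.} Let $\H \in \mathfrak{p}_{\J}$ be $\J$-positive and set $\X := \J\H$.
\begin{enumerate}
\item By the Lemma, $\X = \Phi_{\J}(\H) \in \mathfrak{p}$.
\item Since $\X$ is Hermitian, $\X = \X^{*} = \H^{*}\J$. Therefore $\B_{\J}(\X x\,, x) = x^{*}\X x = x^{*}\H^{*}\J x > 0$ for $x \neq 0$, so $\X \in \mathscr{P}$.
\item Then $\H = \J\X = \Gamma_{\J}(\X) \in \mathscr{P}_{\J}$.
\end{enumerate}

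\textbf{Points needing care.} There is no serious obstacle; the work is bookkeeping with conventions. Over $\mathbb{H}$ we must make sure the identity $x^{*}\H^{*}\J x = x^{*}(\J\H)^{*}x$ is used with the correct order of non-commuting scalars. We must also check that the quantity is real. It is, because $\J\H$ is Hermitian and so $x^{*}\J\H x$ equals its own conjugate. Finally, we use the expression $\B(\H x\,, x) = (\H x)^{*}\J x$ given in Remark~\ref{RemarkPositiveMatrix}(1). This agrees with $x^{*}\J\H x$ for $\H \in \mathfrak{p}_{\J}$, which also settles the equivalence stated in Remark~\ref{RemarkPositiveMatrix}(2).
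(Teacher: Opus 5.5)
Your proof is correct and uses the same two-inclusion argument as the paper. The key observation is also the paper's: for $\H\in\mathfrak{p}_{\J}$ one has $x^{*}\H^{*}\J x = x^{*}(\J\H)x$, so $\H$ is $\J$-positive exactly when $\J\H\in\mathscr{P}$. The only difference is cosmetic: you work with $\mathscr{P}_{\J}=\Gamma_{\J}(\mathscr{P})=\J\mathscr{P}$ directly, whereas the paper phrases both inclusions through $\exp_{\J}(\mathfrak{p}_{\J})$. It checks that $\J\exp_{\J}(\H)=\exp(\J\H)>0$, and for the converse uses the bijection $\exp\circ\Phi_{\J}$ to write $\J\H=\exp(\J\X)$.
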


\begin{proof}

Let $\H \in \mathfrak{p}_{\J}$, i.e. $\H^{*}\J = \J\H$. Using that $\exp(\A)^{*} = \exp(\A^{*})$ for all $\A \in \Mat(n)$, we get
\begin{eqnarray*}
\exp_{\J}(\H)^{*}\J & = & \left(\J\exp(\J\H)\right)^{*}\J = \exp(\J\H)^{*}\J^{*}\J = \exp\left(\left(\J\H\right)^{*}\right)\J^{2} \\
& = & \exp(\H^{*}\J^{*}) = \exp(\H^{*}\J) = \exp(\J\H)
\end{eqnarray*}
and
\begin{equation*}
\J\exp_{\J}(\H) = \J\left(\J\exp(\J\H)\right) = \J^{2}\exp(\J\H) = \exp(\J\H)\,,
\end{equation*}
i.e. $\exp_{\J}(\H) \in \mathfrak{p}_{\J}$. Moreover, for all non-zero $x \in \mathbb{D}^{n}$, we have
\begin{equation*}
x^{*}\left(\J\exp_{\J}(\H)\right)x = x^{*}\exp(\J\H)x > 0
\end{equation*}
because $\J\H \in \mathfrak{p}$, i.e. $\exp(\J\H) \in \mathscr{P}$. Therefore, $\exp_{\J}(\H)$ is $\J$-positive, i.e.
\begin{equation*}
\mathscr{P}_{\J} \subseteq \left\{\X \in \mathfrak{p}_{\J}\,, \X \text{ is J-positive}\right\}\,.
\end{equation*}
Now let $\H$ be a $\J$-positive matrix in $\mathfrak{p}_{\J}$. Then $\J\H \in \mathscr{P}$. Therefore, it follows from Equation \eqref{FirstBijection} that there exists $\X \in \mathfrak{p}_{\J}$ such that $\J\H = \exp(\J\X)$. By multiplying the previous equation by $\J$ on the left, it follows that $\H = \J\exp(\J\X) = \exp_{\J}(\X)$. Hence $\H \in \exp_{\J}(\mathfrak{p}_{\J})$, and the proposition follows\,.

\end{proof}

\begin{remark}

Let $\X \in \mathfrak{p}$, i.e. $\X = \X^{*}$. As explained above, we have $\J\X \in \mathfrak{p}_{\J}$. One can easily see that $\X\J$ is also in $\mathfrak{p}_{\J}$. Indeed,
\begin{equation*}
\left(\X\J\right)^{\sharp} = \J\X^{\sharp} = \J\left(\J\X^{*}\J\right) = \X^{*}\J = \X\J\,,
\end{equation*}
i.e. $\X\J \in \mathfrak{p}_{\J}$. Moreover, if $\X > 0$, then $\X\J$ is $\J$-positive. Indeed, for all $x \in \mathbb{D}^{n}$, we have
\begin{equation*}
x^{*}\J(\X\J)x = \left(\J x\right)^{*}\X\left(\J x\right) > 0\,,
\end{equation*}
i.e. $\X\J \in \mathscr{P}_{\J}$. Therefore, as a set, we have
\begin{equation*}
\mathscr{P}_{\J} = \J\mathscr{P} = \mathscr{P}\J\,.
\end{equation*}

\label{RemarkFebruary22}

\end{remark}

\noindent The cone $\mathscr{P}_{\J}$ is not a subgroup of $\GL(n)$. Indeed, it is not stable under multiplication. However, the following lemma shows that the inverse of an element in $\mathscr{P}_{\J}$ still lies in $\mathscr{P}_{\J}$\,.  

\begin{lemma}

\begin{enumerate}
\item If $\H \in \mathscr{P}_{\J}$, then $\H^{-1} \in \mathscr{P}_{\J}$\,.
\item If $\H = \exp_{\J}(\X)$ with $\X \in \mathfrak{p}_{\J}$, then in general $\H^{-1} \neq \exp_{\J}(-\X)$. More precisely,
\begin{equation}
\label{EquationInverseExponential}
\exp_{\J}(\X)^{-1} = \exp(-\J\X)\J\,,
\end{equation}
and $\exp_{\J}(\X)^{-1} = \exp_{\J}(-\X)$ holds if and only if $\exp(-\J\X)$ commutes with $\J$\,.
\end{enumerate}

\label{InversePJ}
\end{lemma}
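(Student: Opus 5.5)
For part (1) the plan is to work directly from the characterization in Remark \ref{RemarkPositiveMatrix}: $\H \in \mathscr{P}_{\J}$ if and only if $\J\H \in \mathscr{P}$. If $\H \in \mathscr{P}_{\J}$, then $\P := \J\H$ lies in $\mathscr{P}$, and positive matrices are invertible. Hence $\H = \J\P$ is invertible, with $\H^{-1} = \P^{-1}\J$.

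Since the inverse of a positive Hermitian matrix is positive Hermitian, $\P^{-1} \in \mathscr{P}$. This follows, for instance, from $\P^{-1} = \exp(-\log \P)$ and the bijectivity of $\exp: \mathfrak{p} \to \mathscr{P}$. Remark \ref{RemarkFebruary22} then gives $\P^{-1}\J \in \mathscr{P}\J = \mathscr{P}_{\J}$, which proves (1). Alternatively, one can check directly that $\J\H^{-1} = \J\P^{-1}\J = \J^{*}\P^{-1}\J$ is a congruence of $\P^{-1}$ by the invertible matrix $\J$, and hence is positive.

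For part (2), take $\H = \exp_{\J}(\X) = \J\exp(\J\X)$ and invert, using $\J^{-1} = \J$:
\begin{equation*}
\exp_{\J}(\X)^{-1} = \exp(\J\X)^{-1}\J^{-1} = \exp(-\J\X)\J .
\end{equation*}
This is \eqref{EquationInverseExponential}. On the other hand, $\exp_{\J}(-\X) = \J\exp(-\J\X)$. So the equality $\exp_{\J}(\X)^{-1} = \exp_{\J}(-\X)$ reads $\exp(-\J\X)\J = \J\exp(-\J\X)$, which is exactly the statement that $\exp(-\J\X)$ commutes with $\J$.

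To justify the words ``in general'', I would exhibit one example where the commutation fails. Take $p = q = 1$, $\mathbb{D} = \mathbb{R}$, and $\X = \J\S$ with $\S = \begin{bmatrix} 0 & 1 \\ 1 & 0 \end{bmatrix}$. Then $\exp(-\S) = \cosh(1)\Id_{2} - \sinh(1)\S$, which does not commute with $\J$ because $\S\J = -\J\S \neq \J\S$.

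There is no real obstacle here. The only point needing care is the invertibility of $\H$ and the positivity of $\P^{-1}$, both of which are standard facts about positive Hermitian matrices that are valid over $\mathbb{D} = \mathbb{H}$ as well, via the exponential bijection already recalled.
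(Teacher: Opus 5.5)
Your proposal is correct and follows the paper's proof. In part (1) you write $\H = \J\P$ with $\P \in \mathscr{P}$ and observe that $\J\H^{-1} = \J\P^{-1}\J$ is a congruence of $\P^{-1}$, which is exactly the paper's computation $x^{*}\J\H^{-1}x = (\J x)^{*}\P^{-1}(\J x) > 0$; in part (2) you compare $\exp(-\J\X)\J$ with $\exp_{\J}(-\X) = \J\exp(-\J\X)$ just as the paper does, and your real counterexample for ``in general'' plays the role of the complex Example the paper places right after the lemma.
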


\begin{proof}
\begin{enumerate}
\item Let $\H \in \mathscr{P}_{\J}$. By definition, $\H \in \mathfrak{p}_{\J}$ and $\J\H > 0$. Since $\H^{\sharp} = \H$, taking inverses yields
\begin{equation*}
\left(\H^{-1}\right)^{\sharp} = \left(\H^{\sharp}\right)^{-1} = \H^{-1}\,,
\end{equation*}
so $\H^{-1} \in \mathfrak p_{\J}$. Using the map $\Phi_{\J}$, we can write $\H = \J\P$, with $\P \in \mathscr{P}$. In particular, 
\begin{equation*}
\H^{-1} = \P^{-1}\J\,,
\end{equation*}
i.e. $\J\H^{-1} = \J\left(\P^{-1}\J\right)=\J\P^{-1}\J$. Since $\P^{-1} > 0$, hence for all non-zero $x \in \mathbb{D}^{n}$, we get
\begin{equation*}
x^{*}\J\H^{-1}x = x^{*}\J\P^{-1}\J x = \left(\J x\right)^{*}\P^{-1}(\J x) >0\,, 
\end{equation*}
i.e. $\H^{-1} \in \mathscr{P}_{\J}$\,.
\item Let $\H = \exp_{\J}(\X)=\J\exp(\J\X)$, with $\X \in \mathfrak{p}_{\J}$. Then
\begin{equation*}
\H^{-1} = \left(\J\exp(\J\X)\right)^{-1} = \exp(\J\X)^{-1}\J^{-1}
=\exp(-\J\X)\J\,,
\end{equation*}
which proves \eqref{EquationInverseExponential}. On the other hand,
\begin{equation*}
\exp_{\J}(-\X)=\J\exp(\J(-\X))=\J\exp(-\J\X)\,.
\end{equation*}
Hence $(\exp_{\J}(\X))^{-1} = \exp_{\J}(-\X)$ if and only if
\begin{equation*}
\exp(-\J\X)\J = \J\exp(-\J\X)\,,
\end{equation*}
i.e. if and only if $\exp(-\J\X)$ commutes with $\J$\,. 
\end{enumerate}

\end{proof}

\begin{example}

We give an explicit example showing that $\exp_{\J}(\X)^{-1} \neq \exp_{\J}(-\X)$. Let $p = q = 1$, and let $\X$ be the matrix in $\Mat(2)$ given by
\begin{equation*}
\X = \begin{bmatrix} 0 & i \\ i & 0\end{bmatrix}\,.
\end{equation*}
Using that
\begin{equation*}
\J\X = \begin{bmatrix} 1 & 0 \\ 0 & -1 \end{bmatrix}\begin{bmatrix} 0 & i \\ i & 0\end{bmatrix} = \begin{bmatrix} 0 & i \\ -i & 0\end{bmatrix}\,,
\end{equation*}
we get that $\J\X \in \mathfrak{p}$, so $\X \in \mathfrak{p}_{\J}$. Let $\K := \J\X$. One can see that $\K^{2} = \Id_{2}$, hence
\begin{equation*}
\exp(-\K) = \begin{bmatrix} \cosh(1) & -i\sinh(1)\\ i\sinh(1) & \cosh(1) \end{bmatrix}\,.
\end{equation*}
Using Equation \eqref{EquationInverseExponential}, we obtain
\begin{equation*}
\exp_{\J}(\X)^{-1} = \exp(-\K)\J = \begin{bmatrix} \cosh(1) & i\sinh(1)\\ i\sinh(1) & -\cosh(1) \end{bmatrix}\,,
\end{equation*}
whereas
\begin{equation*}
\exp_{\J}(-\X) = \J\exp(-\K) = \begin{bmatrix} \cosh(1) & -i\sinh(1)\\
-i\sinh(1) & -\cosh(1) \end{bmatrix}\,.
\end{equation*}
These matrices are different, which shows that $\exp_{\J}(\X)^{-1}\neq \exp_{\J}(-\X)$ in general\,.

\end{example}

\begin{remark}

If $\H \in \mathfrak{p}$, then $\Spec(\H) \subseteq \mathbb{R}$. However, for a matrix $\X \in \mathfrak{p}_{\J}$, we do not get, in general, that $\Spec(\X) \subseteq \mathbb{R}$. Indeed, let $\X = \begin{bmatrix} 0 & 1 \\ -1 & 0 \end{bmatrix} \in \Mat(2)$. We have
\begin{equation*}
\X^{*}\J = \begin{bmatrix} 0 & 1 \\ 1 & 0 \end{bmatrix} = \J\X\,,
\end{equation*}
i.e. $\X \in \mathfrak{p}_{\J}$. However, $\Spec(\X) = \left\{\pm i\right\}$\,.

\end{remark}

\begin{remark}

\noindent We finish this section with a remark. We denote by $\bullet$ the binary operation on $\Mat(n)$ given by
\begin{equation}
\A \bullet \B := \A\J\B\,, \qquad \left(\A\,,\B \in \Mat(n)\right)\,.
\label{JProduct}
\end{equation}
This operation $\bullet$ is associative. Indeed, for all $\A\,, \B\,, \C \in \Mat(n)$, we have
\begin{equation*}
\left(\A \bullet \B\right) \bullet \C = \left(\A\J\B\right)\J\C = \A\J\left(\B\J\C\right) = \A \bullet \left(\B \bullet \C\right)\,.
\end{equation*}
The neutral element for $\bullet$ is precisely $\J$. Indeed, for all $\A \in \Mat(n)$,
\begin{equation*}
\J \bullet \A = \J\left(\J\A\right) = \J^{2}\A = \A \qquad \text{and} \qquad \A \bullet \J = \left(\A\J\right)\J = \A\,.
\end{equation*}
An element $\A$ is invertible for the $\J$-product if and only if it is invertible as a matrix, and we have
\begin{equation*}
\A \bullet \left(\J\A^{-1}\J\right) = \A\J\left(\J\A^{-1}\J\right) = \A\J^{2}\A^{-1}\J = \A\A^{-1}\J = \J\,,
\end{equation*}
i.e. the inverse of $\A$ with respect to $\bullet$ is $\J\A^{-1}\J$. 

\noindent Moreover, for all $\A\,, \B \in \Mat(n)$, we have
\begin{equation*}
\left(\A \bullet \B\right)^{\sharp} = \J\left(\A\J\B\right)^{*}\J = \J\B^{*}\J\A^{*}\J = \left(\J\B^{*}\J\right)\J\left(\J\A^{*}\J\right) = \B^{\sharp} \bullet \A^{\sharp}\,.
\end{equation*}

\noindent Let $\left[\cdot\,, \cdot\right]$ be the binary operation on $\Mat(n)$ given by
\begin{equation*}
\left[\X\,, \Y\right]_{\J} = \X \bullet \Y - \Y \bullet \X\,, \qquad \left(\X\,, \Y \in \Mat(n)\right)\,.
\end{equation*}
One can see that the map
\begin{equation*}
\widetilde{\Phi_{\J}}: \left(\Mat(n)\,, \left[\cdot\,, \cdot\right]\right) \ni \X \to \J\X \in \left(\Mat(n)\,, \left[\cdot\,, \cdot\right]_{\J}\right)
\end{equation*}
is an isomorphism of Lie algebras\,.

\noindent The map
\begin{equation*}
\exp_{\J}: \mathfrak{p}_{\J} \to \mathscr{P}_{\J}
\end{equation*}
is exactly the exponential map associated with the $\J$-product. Indeed, for all $\X\,, \Y \in \mathfrak{p}_{\J}$, we get 
\begin{equation*}
\exp_{\J}(\X) \bullet \exp_{\J}(\Y) = \exp_{\J}\left(\J \cdot \BCH_{\J}(\J\X\,, \J\Y)\right)\,,
\end{equation*}
where $\BCH_{\J}$ is the standard Baker-Campbell-Hausdorff formula (see \cite{KNAPP}) where the Lie bracket is replaced by $\left[\cdot\,, \cdot\right]_{\J}$.

\noindent Finally, one can see that for all $g \in \GL(n)$, there exists a unique pair $\left(k\,, p\right) \in \U \times \mathscr{P}_{\J}$ such that $g = k \bullet p$. Indeed, using the classical polar decomposition of $\GL(n)$ (see \cite{BHATIA}), there exists a unique pair $\left(k\,, \widetilde{p}\right) \in \U \times \mathscr{P}$ such that $g = k\widetilde{p}$. Therefore, using that $\mathscr{P}_{\J} = \J\mathscr{P}$ and $\J^{2} = \Id_{n}$, we get
\begin{equation*}
g = k\widetilde{p} = k \J^{2}\widetilde{p} = k\J\left(\J\widetilde{p}\right) = k \bullet p\,, 
\end{equation*}
with $p = \J\widetilde{p} \in \mathscr{P}_{\J}$\,.

\end{remark}

\section{Square root on $\mathscr{P}_{\J}$ and properties}

\label{SectionTwo}

Let $\exp_{\J}: \mathfrak{p}_{\J} \to \mathscr{P}_{\J}$ be the $\J$-exponential map defined in Equation \eqref{JExponentialMap}. As explained in Section \ref{SectionOne}, the map $\exp_{\J}$ is bijective. Let $\log_{\J} := \exp^{-1}_{\J}: \mathscr{P}_{\J} \to \mathfrak{p}_{\J}$ be the inverse of $\exp_{\J}$. In particular, we have
\begin{equation*}
\log_{\J}(\X) = \J\log(\J\X)\,, \qquad \qquad \left(\X \in \mathscr{P}_{\J}\right)\,,
\end{equation*}
where $\log: \mathscr{P} \to \mathfrak{p}$ is the inverse of $\exp: \mathfrak{p} \to \mathscr{P}$\,.

\begin{definition}

For all $t \in \mathbb{R}$ and $\X \in \mathscr{P}_{\J}$, we denote by $\X^{t}_{\J}$ the matrix in $\mathscr{P}_{\J}$ given by
\begin{equation*}
\X^{t}_{\J} = \exp_{\J}\left(t\log_{\J}(\X)\right)\,.
\end{equation*}
If $t = \frac{1}{2}$, the matrix $\X^{\frac{1}{2}}_{\J}$ is the $\J$-square root of $\X$\,.

\end{definition}

\begin{remark}

For all $\A \in \mathscr{P}$ and $t \in \mathbb{R}$, we denote by $\A^{t}$ the matrix in $\mathscr{P}$
given by
\begin{equation*}
\A^{t} = \exp(t\log(\A))\,.
\end{equation*}
We have the following properties (see \cite{BHATIA}): for all $\A\,, \B \in \mathscr{P}$ and $s\,, t \in \mathbb{R}$:
\begin{enumerate}
\item $\A^{s}\A^{t} = \A^{s+t}$ and $\left(\A^{s}\right)^{t} = \A^{st}$\,.
\item If $\A$ and $\B$ commute in $\mathscr{P}$, then $\A\B \in \mathscr{P}$ and 
\begin{equation*}
\left(\A\B\right)^{s} = \A^{s}\B^{s}\,.
\end{equation*}
\item If $\C \in \U$, then
\begin{equation*}
\left(\C\A\C^{*}\right)^{t} = \C\A^{t}\C^{*}\,.
\end{equation*}
\end{enumerate}
\label{RemarkSquareRootInP}

\end{remark}

\noindent We now prove some interesting properties for our matrices $\X^{t}_{\J}$\,.

\begin{proposition}

\begin{enumerate}
\item For all $\X \in \mathscr{P}_{\J}$ and $t \in \mathbb{R}$, we have $\X^{t}_{\J} = \J(\J\X)^{t}$. In particular, $\X^{\frac{1}{2}}_{\J} = \J(\J\X)^{\frac{1}{2}}$ and $(\X^{\frac{1}{2}}_{\J})^{2}_{\J} = \X$\,.
\item For all $s\,, t\in\mathbb{R}$ and $\X \in \mathscr{P}_{\J}$, we have
\begin{equation*}
\X^{0}_{\J} = \J\,, \qquad \X^{1}_{\J} = \X\,, \qquad (\X^{t}_{\J})^{s}_{\J} = \X^{ts}_{\J}\,, \qquad \log_{\J}(\X^{t}_{\J}) = t\log_{\J}(\X)\,.
\end{equation*}
\item For all $\X \in \mathscr{P}_{\J}$, we have
\begin{equation*}
(\X^{-1})^{t}_{\J} = (\X^{t}_{\J})^{-1}\,,
\end{equation*}
In particular, $(\X^{-1})^{\frac{1}{2}}_{\J} = (\X^{\frac{1}{2}}_{\J})^{-1}$\,.
\item For all $\X \in \mathscr{P}_{\J}$ and $s,\alpha \in \mathbb{R}$, we get that 
\begin{equation*}
\X^{s}_{\J}=\X^{\alpha s}_{\J} \bullet \X^{(1-\alpha) s}_{\J} \qquad \text{ and } \qquad\X^{s}_{\J} \bullet \X^{-s}_{\J}=\J\,.
\end{equation*}
\end{enumerate}

\label{PropositionSquareRoot}

\end{proposition}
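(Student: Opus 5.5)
The whole proof reduces to item (1), which transports each statement to the classical cone $\mathscr{P}$, where Remark \ref{RemarkSquareRootInP} applies. For (1), I unfold the definitions $\log_{\J}(\X) = \J\log(\J\X)$ and $\exp_{\J}(\Y) = \J\exp(\J\Y)$. Using $\J^{2} = \Id_{n}$, this gives
\begin{equation*}
\X^{t}_{\J} = \J\exp\left(\J\cdot t\J\log(\J\X)\right) = \J\exp\left(t\log(\J\X)\right) = \J(\J\X)^{t}\,,
\end{equation*}
since $\J\X \in \mathscr{P}$ by Remark \ref{RemarkPositiveMatrix}. The identity $(\X^{\frac{1}{2}}_{\J})^{2}_{\J} = \X$ is the special case $t = \frac{1}{2}$, $s = 2$ of (2), so I prove (2) next.

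For (2), I work in the form $\X^{t}_{\J} = \J(\J\X)^{t}$. Then $\X^{0}_{\J} = \J\Id_{n} = \J$ and $\X^{1}_{\J} = \J\J\X = \X$. Next, $\J\X^{t}_{\J} = (\J\X)^{t}$, so
\begin{equation*}
(\X^{t}_{\J})^{s}_{\J} = \J\left((\J\X)^{t}\right)^{s} = \J(\J\X)^{ts} = \X^{ts}_{\J}
\end{equation*}
by Remark \ref{RemarkSquareRootInP}(1). Finally, $\log_{\J}(\X^{t}_{\J}) = \J\log((\J\X)^{t}) = t\J\log(\J\X) = t\log_{\J}(\X)$. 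Alternatively, this last identity follows directly from bijectivity of $\exp_{\J}$.

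Item (3) needs the most care, because inversion does not commute with $\J$. Write $\P = \J\X \in \mathscr{P}$. Then $\X^{-1} = \P^{-1}\J$, so $\J\X^{-1} = \J\P^{-1}\J$. Since $\J \in \U$ and $\J^{*} = \J$, Remark \ref{RemarkSquareRootInP}(3) gives
\begin{equation*}
(\X^{-1})^{t}_{\J} = \J(\J\P^{-1}\J)^{t} = \J\J\P^{-t}\J = \P^{-t}\J\,.
\end{equation*}
On the other side, $(\X^{t}_{\J})^{-1} = (\J\P^{t})^{-1} = \P^{-t}\J$. The two agree, and the square-root statement is the case $t = \frac{1}{2}$.

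For (4), I compute the product directly:
\begin{equation*}
\X^{\alpha s}_{\J} \bullet \X^{(1-\alpha)s}_{\J} = \J\P^{\alpha s}\J\J\P^{(1-\alpha)s} = \J\P^{s} = \X^{s}_{\J}\,,
\end{equation*}
using Remark \ref{RemarkSquareRootInP}(1). Similarly, $\X^{s}_{\J} \bullet \X^{-s}_{\J} = \J\P^{s}\P^{-s} = \J$. The only real obstacle in the proposition is keeping track of where $\J$ sits, which matters in (3); everything else is bookkeeping with $\J^{2} = \Id_{n}$.
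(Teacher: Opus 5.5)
Your proposal is correct and follows the paper's proof essentially step by step. Both arguments reduce everything to $\X^{t}_{\J} = \J(\J\X)^{t}$ and then apply Remark~\ref{RemarkSquareRootInP} to $\P = \J\X \in \mathscr{P}$, including the same use of $\J \in \U$, $\J^{*} = \J$ in (3). The only cosmetic difference is that you obtain $(\X^{\frac{1}{2}}_{\J})^{2}_{\J} = \X$ as the case $t = \frac{1}{2}$, $s = 2$ of (2), whereas the paper checks it by a direct computation in (1); this introduces no circularity.
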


\begin{proof}

\begin{enumerate}
\item Using that $\exp_{\J}(\H) = \J\exp(\J\H)$ and $\log_{\J}(\X) = \J\log(\J\X)$, we get
\begin{equation*}
\X^{t}_{\J} = \exp_{\J}\left(t\log_{\J}(\X)\right) = \J\exp\left(t\log(\J\X)\right) = \J(\J\X)^{t}\,.
\end{equation*}
Moreover, we get
\begin{equation*}
(\X^{\frac{1}{2}}_{\J})^{2}_{\J} = \J\left(\J\X^{\frac{1}{2}}_{\J}\right)^{2} = \J\left(\J^{2}(\J\X)^{\frac{1}{2}}\right)^{2} = \J\left(\J\X\right)^{\frac{1}{2}}\left(\J\X\right)^{\frac{1}{2}} = \J(\J\X) = \J^{2}\X = \X\,.
\end{equation*}
\item From (1), we get that for all $\X \in \mathscr{P}_{\J}$, $\X^{0}_{\J} = \J(\J\X)^{0} = \J$ and $\X^{1}_{\J} = \J(\J\X)^{1} = \J^{2}\X = \X$. Moreover, for all $s\,, t \in \mathbb{R}$,
\begin{equation*}
\left(\X^{t}_{\J}\right)^{s}_{\J} = \J\left(\J\X^{t}_{\J}\right)^{s} = \J\left(\J^{2}\left(\J\X\right)^{t}\right)^{s} = \J\left(\left(\J\X\right)^{t}\right)^{s} = \J(\J\X)^{ts} = \X^{ts}_{\J}\,,
\end{equation*}
and 
\begin{eqnarray*}
\log_{\J}(\X^{t}_{\J}) = \J\log(\J\X^{t}_{\J}) & = & \J\log(\J^{2}(\J\X)^{t}) = \J\log\left(\exp(t\log(\J\X))\right) \\
& = & \J\left(t\log(\J\X)\right) = t\log_{\J}(\X)\,.
\end{eqnarray*}
\item Let $\X \in \mathscr{P}_{\J}$ and let $\P := \J\X \in \mathscr{P}$. Then $\X = \J\P$ and
\begin{equation*}
\X^{-1} = \left(\J\P\right)^{-1} = \P^{-1}\J\,.
\end{equation*}
Hence $\J\X^{-1} = \J(\P^{-1}\J) = \J\P^{-1}\J$. Since $\J$ is unitary (i.e. $\J^{*} = \J = \J^{-1}$), it follows from Remark \ref{RemarkSquareRootInP} that
\begin{equation*}
\left(\J\P^{-1}\J\right)^t = \J\left(\P^{-1}\right)^{t}\J = \J\P^{-t}\J\,.
\end{equation*}
It follows that
\begin{equation*}
\left(\X^{-1}\right)^{t}_{\J} = \J\left(\J\X^{-1}\right)^{t} =
\J\left(\J\P^{-t}\J\right) = \P^{-t}\J\,.
\end{equation*}
On the other hand, from (1) we have $\X^{t}_{\J} = \J\left(\J\X\right)^{t} = \J\P^{t}$, so
\begin{equation*}
\left(\X^{t}_{\J}\right)^{-1} = \left(\J\P^{t}\right)^{-1} = \left(\P^{t}\right)^{-1}\J = \P^{-t}\J\,,
\end{equation*}
therefore
\begin{equation*}
\left(\X^{-1}\right)^{t}_{\J} = \left(\X^{t}_{\J}\right)^{-1}\,.
\end{equation*}
\item Using that $\X^{s}_{\J} = \J(\J\X)^{s}$, we get
\begin{eqnarray*}
\X^{\alpha s}_{\J} \bullet \X^{(1-\alpha)s}_{\J} & = & \J (\J\X)^{\alpha s}\J^{2}(\J\X)^{(1-\alpha)s} = \J\left(\J\X\right)^{\alpha s}\left(\J\X\right)^{(1-\alpha)s} \\
& = & \J\left(\J\X\right)^{\alpha s + (1-\alpha)s} = \J(\J\X)^{s} = \X^{s}_{\J}\,,
\end{eqnarray*}
and
\begin{equation*}
\X^{s}_{\J}\bullet\X^{-s}_{\J} = \J(\J\X)^{s}\J^{2}(\J\X)^{-s} = \J\left(\J\X\right)^{s-s} = \J\left(\J\X\right)^{0} = \J\,.
\end{equation*}
\end{enumerate}

\end{proof}

\begin{proposition}

We have
\begin{enumerate}
\item For all $g \in \GL(n)$ and $\X \in \mathscr{P}_{\J}$, $g\X g^{\sharp} \in \mathscr{P}_{\J}$\,. 
\item The corresponding action of $\GL(n)$ on $\mathscr{P}_{\J}$ is transitive\,.
\item If $g \in \K_{\J} := \U_{\J} \cap \U$ (i.e. $g^{\sharp}g = \Id_{n}$ and $g^{*}g = \Id_{n}$), then for all $t \in \mathbb{R}$, we get
\begin{equation*}
\left(g\X g^\sharp\right)^{t}_{\J} = g\X^{t}_{\J}g^{\sharp}\,.
\end{equation*}
In particular, we get $(g\X g^{\sharp})^{\frac{1}{2}}_{\J} = g\X^{\frac{1}{2}}_{\J}g^{\sharp}$\,.
\item Let $\X\,, \Y \in \mathscr{P}_{\J}$ such that $\X \bullet \Y = \Y \bullet \X$. Then for all $t \in \mathbb{R}$, we get
\begin{equation*}
\left(\X \bullet \Y\right)^{t}_{\J} = \X^{t}_{\J} \bullet \Y^{t}_{\J}\,.
\end{equation*}
\end{enumerate}

\label{PropositionPropertiesSRJ}

\end{proposition}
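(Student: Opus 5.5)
The plan is to reduce every item to a statement about the classical cone $\mathscr{P}$ through the identity $\X = \J\P$ with $\P := \J\X \in \mathscr{P}$. Then use the formula $\X^{t}_{\J} = \J(\J\X)^{t}$ from Proposition \ref{PropositionSquareRoot}, together with the classical facts of Remark \ref{RemarkSquareRootInP}.

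For (1), let $g \in \GL(n)$ and $\X = \J\P$ with $\P > 0$. I would compute
\begin{equation*}
\J\left(g\X g^{\sharp}\right) = \J g\J\P\J g^{*}\J = \left(\J g\J\right)\P\left(\J g\J\right)^{*}.
\end{equation*}
Since $h := \J g\J$ is invertible, $h\P h^{*}$ is Hermitian and positive: for $x \neq 0$ we have $x^{*}h\P h^{*}x = (h^{*}x)^{*}\P(h^{*}x) > 0$. Hence $g\X g^{\sharp} \in \mathscr{P}_{\J}$ by Remark \ref{RemarkPositiveMatrix}.

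For (2), it suffices to reach the base point $\J$, since $\J \in \mathscr{P}_{\J}$ (it is $\X^{0}_{\J}$). Given $\X = \J\P$, I would look for $g$ with $g\J g^{\sharp} = \X$. From the computation in (1), $\J(g\J g^{\sharp}) = h h^{*}$ with $h = \J g \J$. So I would take $h := \P^{\frac{1}{2}}$, i.e. $g := \J\P^{\frac{1}{2}}\J$, which gives $\J(g\J g^{\sharp}) = \P$, hence $g\J g^{\sharp} = \X$. Transitivity then follows by composing with inverses; the action property $(g_{1}g_{2})\X(g_{1}g_{2})^{\sharp} = g_{1}(g_{2}\X g_{2}^{\sharp})g_{1}^{\sharp}$ holds because $(g_1g_2)^\sharp = g_2^\sharp g_1^\sharp$.

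For (3), let $g \in \U_{\J}\cap\U$. Then $g^{-1} = g^{*} = g^{\sharp} = \J g^{*}\J$, so $g^{*}\J = \J g^{*}$, and hence $g$ commutes with $\J$. Consequently $\J g\X g^{\sharp} = g(\J\X)g^{*}$ with $g$ unitary. By Remark \ref{RemarkSquareRootInP}(3), $(g\P g^{*})^{t} = g\P^{t}g^{*}$. Therefore
\begin{equation*}
\left(g\X g^{\sharp}\right)^{t}_{\J} = \J g\P^{t}g^{*} = g\J\P^{t}\J g^{*}\J = g\X^{t}_{\J}g^{\sharp},
\end{equation*}
again using that $g$ commutes with $\J$. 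The main obstacle here is checking that membership in $\K_{\J}$ forces $g$ to commute with $\J$; this is the step that makes the unitary invariance from Remark \ref{RemarkSquareRootInP} applicable.

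For (4), write $\X = \J\P$ and $\Y = \J\Q$. Then $\X\bullet\Y = \J\P\J\J\Q = \J\P\Q$, so the hypothesis $\X \bullet \Y = \Y \bullet \X$ means $\P\Q = \Q\P$. By Remark \ref{RemarkSquareRootInP}(2), $\P\Q \in \mathscr{P}$, so $\X\bullet\Y = \J\P\Q \in \mathscr{P}_{\J}$, and $(\P\Q)^{t} = \P^{t}\Q^{t}$. Hence
\begin{equation*}
\left(\X\bullet\Y\right)^{t}_{\J} = \J\P^{t}\Q^{t} = \J\P^{t}\J\J\Q^{t} = \X^{t}_{\J}\bullet\Y^{t}_{\J}.
\end{equation*}
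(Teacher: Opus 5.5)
Your proposal is correct and follows essentially the same route as the paper. Each item is reduced to the classical cone via $\J\X \in \mathscr{P}$: congruence positivity for (1), the factorization $\P = hh^{*}$ with $h = \P^{\frac{1}{2}}$ for (2), the identity $\J(g\X g^{\sharp}) = g(\J\X)g^{*}$ together with unitary invariance of real powers for (3), and the commuting-powers rule for (4). You also make explicit two points the paper uses only implicitly: that $g \in \K_{\J}$ commutes with $\J$, and that $\X \bullet \Y$ lies in $\mathscr{P}_{\J}$, so $(\X \bullet \Y)^{t}_{\J}$ is defined.
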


\begin{proof}

\begin{enumerate}
\item Let $g \in \GL(n)$ and $\X \in \mathscr{P}_{\J}$. Using that $\X^{\sharp} = \X$, we get that
\begin{equation*}
\left(g \X g^{\sharp}\right)^{\sharp} = \left(g^{\sharp}\right)^{\sharp} \X^{\sharp} g^{\sharp} = g \X g^{\sharp}\,,
\end{equation*}
i.e. $g\X g^{\sharp} \in \mathfrak{p}_{\J}$. Moreover, using that $\X \in \mathscr{P}_{\J}$, it follows that for all $x \in \mathbb{D}^{n}$ that
\begin{equation*}
\B_{\J}(g\X g^{\sharp}x\,, x) = x^{*}\left(\J g\X \J g^{*} \J\right)x = \left(\J g^{*}\J x\right)^{*} \underbrace{\J\X}_{> 0} \left(\J g^{*}\J x\right)\,,
\end{equation*}
i.e. $g\X g^{\sharp}$ is $\J$-positive. Therefore $g\X g^{\sharp} \in \mathscr{P}_{\J}\,.$
\item  The action of $\GL(n)$ on $\mathscr{P}$ is transitive (see \cite{BHATIA} and Appendix \ref{AppendixA} for the case $\mathbb{D} = \mathbb{H}$). In particular, for all $\P \in \mathscr{P}$, there exists $g \in \GL(n)$ such that $\P = gg^{*}$, i.e. $\mathscr{P} = \left\{gg^{*}\,, g \in \GL(n)\right\}$. Using Remark \ref{RemarkFebruary22}, we have $\mathscr{P}_{\J} = \J\mathscr{P} = \mathscr{P}\J$. For all $g \in \GL(n)$, we get
\begin{equation*}
g\J g^{\sharp} = g\J\left(\J g^{*}\J\right) = g\J^{2}g^{*}\J = \underbrace{gg^{*}}_{\in \mathscr{P}}\J\,,
\end{equation*}
i.e. $\mathscr{P}_{\J} \subseteq \left\{g\J g^{\sharp}\,, g \in \GL(n)\right\} = \mathscr{P}_{\J}$, i.e. $\GL(n) \curvearrowright \mathscr{P}_{\J}$ is transitive\,.
\item Let $g \in \U \cap \U_{\J}$. From the definition of $\X^{t}_{\J}$, we obtain $(g\X g^\sharp)^{t}_{\J} = \J\left(\J(g\X g^{\sharp})\right)^{t}$. Moreover, using that $g \in \U_{\J}$, we have $g^{\sharp}g = \Id_{n}$, i.e. $\left(g^{-1}\right)^{*}\J = \J g$. Therefore $g^{*}\J = \J g^{-1}$ and
\begin{equation*}
\J(g\X g^{\sharp}) = \J g\X\J g^{*}\J = \left(g^{-1}\right)^{*}\J\X\J^{2}g^{-1} = \left(g^{-1}\right)^{*}\J\X g^{-1}\,.
\end{equation*}
Using that $\J\X \in \mathscr{P}$ and $g \in \U$, it follows that
\begin{equation*}
\left(\J(g\X g^{\sharp})\right)^{t} = \left(g^{-1}\right)^{*}(\J\X)^{t}g^{-1}\,.
\end{equation*}
Multiplying the previous equation by $\J$ on the left, we obtain
\begin{equation*}
(g\X g^{\sharp})^{t}_{\J} = \J \left(g^{-1}\right)^{*} (\J\X)^{t}g^{-1}\,.
\end{equation*}
Finally, it follows from $\J \left(g^{-1}\right)^{*} = g\J$ and $g^{\sharp} = g^{-1}$ that
\begin{equation*}
(g\X g^{\sharp})^{t}_{\J} = g\J(\J\X)^{t}g^{-1} = g\X^{t}_{\J}g^{-1} = g\X^{t}_{\J}g^{\sharp}\,,
\end{equation*}
which proves the desired property\,.
\item Using that $\X\J\Y = \Y\J\X$, it follows that $(\J\X)(\J\Y) = (\J\Y)(\J\X)$. In particular, it follows from Remark \ref{RemarkSquareRootInP} that $\left((\J\X)(\J\Y)\right)^{t} = \left(\J\X\right)^{t}\left(\J\Y\right)^{t}$.

\noindent We get
\begin{equation*}
(\X \bullet \Y)^{t}_{\J} = \J\left(\J(\X\J\Y)\right)^{t} = \J\left((\J\X)(\J\Y)\right)^{t}
= \J\left(\J\X\right)^{t}\left(\J\Y\right)^{t}
\end{equation*}
and 
\begin{equation*}
\X^{t}_{\J} \bullet \Y^{t}_{\J} = \J\left(\J\X\right)^{t}\J^{2}\left(\J\Y\right)^{t} = \J\left(\J\X\right)^{t}\left(\J\Y\right)^{t}\,,
\end{equation*}
so the result follows\,.
\end{enumerate}

\end{proof}

\begin{remark}

The group $\K_{\J}$ defined in Proposition \ref{PropositionPropertiesSRJ} is the maximal compact subgroup of $\U_{\J}$. More precisely, we have
\begin{equation*}
\K_{\J} := \begin{cases} \O(p) \times \O(q) & \text{ if } \U_{\J} = \O(p\,, q) \\ \U(p) \times \U(q) & \text{ if } \U_{\J} = \U(p\,, q) \\ \Sp(p) \times \Sp(q) & \text{ if } \U_{\J} = \Sp(p\,, q) \end{cases}\,.
\end{equation*}

\end{remark}

\begin{remark}

The computations in this section show that $\mathscr{P}_{\J}$ is the natural analogue of the cone of positive definite Hermitian matrices, with the identity matrix replaced by $\J$ and the usual product replaced by $\bullet$\,.

\noindent It is easy to see that this yields a $*$-algebra structure on $\Mat(n)$. Notice, however, that the self-adjoint operators associated to the involution $\sharp$, i.e. elements of $\mathfrak{p}_{\J}$, do not have a real spectrum. Therefore, this negates the possibility of using functional calculus, as it would be the case in a $\C^{*}$-algebra. In the latter case, the geometric means are well-understood\,.

\end{remark}

\section{Loewner order on the cone $\mathscr{P}_{\J}$}

\label{SectionThree}

We start this section by recalling the definition of Loewner's order on $\mathscr{P}$. First of all, we denote by $\mathscr{P}_{0}$ the set of positive semi-definite Hermitian matrices in $\mathfrak{p}$, i.e.
\begin{equation*}
\mathscr{P}_{0} := \left\{\X \in \mathfrak{p}\,, \B_{\J}(\X u\,, u) \geq 0\,, \left(u \in \mathbb{D}^{n}\right)\right\}\,.
\end{equation*}
In particular, $\mathscr{P} \subseteq \mathscr{P}_{0}$, and if $\X \in \mathscr{P}_{0}$, we write $\X \geq 0$\,.

\noindent The Loewner order on $\mathscr{P}$ is the partial order defined by
\begin{equation*}
\A \preceq \B \qquad \Longleftrightarrow \qquad \B-\A \in \mathscr{P}_{0}\,,
\end{equation*}
that is $\B-\A$ is a positive semi-definite Hermitian matrix. Equivalently, $\A \preceq \B$ if and only if $v^{*}\left(\B-\A\right)v \geq 0$ for all non-zero $v\in\mathbb{D}^{n}$.

\begin{remark}

\noindent This order is closed under congruence transformations. Indeed, if $\A \preceq \B$, then for all $\M \in \Mat(n)$, we get
\begin{equation*}
\M^{*}\A\M \preceq \M^{*}\B\M\,.
\end{equation*}
Moreover, for all $t \in \left[0\,, 1\right]$, we have
\begin{equation*}
\A^{t} \preceq \B^{t}\,.
\end{equation*}

\label{LoewnerOrderP}

\end{remark}

\noindent We now defined a partial order on the cone $\mathscr{P}_{\J}$ by using $\left(\mathscr{P}\,, \preceq\right)$ and the map $\mathscr{P}_{\J} \ni \X \to \J\X \in \mathscr{P}$\,.

\begin{definition}

For all $\X\,, \Y \in \mathscr{P}_{\J}$, we say that $\X \preceq_{\J} \Y$ if and only if $\J\X \preceq \J\Y$\,.

\end{definition}

\noindent This order is precisely the pullback of the Loewner order on $\mathscr{P}$
through the linear isomorphism $\mathscr{P}_{\J} \ni \X \mapsto \J\X \in \mathscr{P}$\,.

\noindent It is easy to see that $\preceq_{\J}$ defines a partial order on $\mathscr{P}_{\J}$. The next lemmas give an analogue of the results given in Remark \ref{LoewnerOrderP} on our cone $\mathscr{P}_{\J}$\,.

\begin{lemma}

If $\X\,, \Y \in \mathscr{P}_{\J}$ are such that $\X \preceq_{\J} \Y$, then for all $t \in \left[0\,, 1\right]$, we get
\begin{equation*}
\X^{t}_{\J} \preceq_{\J} \Y^{t}_{\J}\,.
\end{equation*}

\label{LemmaMonoticity}

\end{lemma}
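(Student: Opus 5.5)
The plan is to reduce everything to the classical Loewner--Heinz inequality on $\mathscr{P}$ through the identification $\X \mapsto \J\X$. I will first set $\P := \J\X$ and $\Q := \J\Y$. By Remark \ref{RemarkPositiveMatrix} both lie in $\mathscr{P}$. By the definition of $\preceq_{\J}$, the hypothesis $\X \preceq_{\J} \Y$ says exactly that $\P \preceq \Q$.

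Next, Proposition \ref{PropositionSquareRoot}(1) gives $\X^{t}_{\J} = \J(\J\X)^{t} = \J\P^{t}$ and $\Y^{t}_{\J} = \J\Q^{t}$. Multiplying on the left by $\J$ and using $\J^{2} = \Id_{n}$, we get $\J\X^{t}_{\J} = \P^{t}$ and $\J\Y^{t}_{\J} = \Q^{t}$. Both belong to $\mathscr{P}$, and $\X^{t}_{\J}, \Y^{t}_{\J} \in \mathscr{P}_{\J}$, so the order $\preceq_{\J}$ applies to them. The conclusion $\X^{t}_{\J} \preceq_{\J} \Y^{t}_{\J}$ is therefore equivalent to $\P^{t} \preceq \Q^{t}$.

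This last inequality is the operator monotonicity of $s \mapsto s^{t}$ for $t \in [0,1]$, which is recorded in Remark \ref{LoewnerOrderP}. Invoking it finishes the proof.

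The only step that needs care is the use of Remark \ref{LoewnerOrderP} when $\mathbb{D} = \mathbb{H}$, since the classical Loewner--Heinz theorem is usually stated over $\mathbb{C}$. I would take the remark as given, as the paper does. If a justification were needed, I would pass to the complex representation $\mathbb{H}^{n} \cong \mathbb{C}^{2n}$ of Appendix \ref{AppendixA}. This embedding is a real-linear $*$-homomorphism that preserves positivity and commutes with the exponential, and hence with real powers. The quaternionic inequality then follows from the complex one. Everything else is a one-line unwinding of the definitions.
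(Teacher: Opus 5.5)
Your proposal is correct and follows essentially the same route as the paper's proof. You translate the hypothesis to $\J\X \preceq \J\Y$ in $\mathscr{P}$, apply the Loewner--Heinz property from Remark \ref{LoewnerOrderP}, and use $\J\X^{t}_{\J} = (\J\X)^{t}$ (via $\J^{2} = \Id_{n}$) to return to $\preceq_{\J}$. Your note on justifying Loewner--Heinz for $\mathbb{D} = \mathbb{H}$ through the embedding $\Psi$ into $\Mat(2n\,, \mathbb{C})$ is a sound supplement that the paper leaves implicit.
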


\begin{proof}

Let $\X \preceq_{\J} \Y$, i.e. $\J\X \preceq \J\Y$. Since $\J\X$ and $\J\Y$ are both in $\mathscr{P}$, it follows from Remark \ref{LoewnerOrderP} that for all $t \in \left[0\,, 1\right]$, we have
\begin{equation*}
(\J\X)^{t} \preceq (\J\Y)^{t}\,.
\end{equation*}
From $\J^{2} = \Id$, we get that $(\J\X)^{t} = \J^{2}(\J\X)^{t} = \J\X^{t}_{\J}$, so
\begin{equation*}
\J\X^{t}_{\J} \preceq \J\Y^{t}_{\J}\,,
\end{equation*}
which is precisely the statement that
\begin{equation*}
\X^{t}_{\J} \preceq_{\J} \Y^{t}_{\J}\,.
\end{equation*}

\end{proof}

\begin{remark}

The restriction $t \in \left[0\,, 1\right]$ in Lemma \ref{LemmaMonoticity} is necessary since the map $\A \mapsto \A^{t}$ is operator monotone only in this range\,.

\end{remark}

\begin{lemma}

If $\X\,, \Y \in \mathscr{P}_{\J}$ are such that $\X \preceq_{\J} \Y$, then for all $\C \in \Mat(n)$, we have
\begin{equation*}
\C^{\sharp}\X\C \preceq_{\J} \C^{\sharp}\Y\C\,.
\end{equation*}    

\end{lemma}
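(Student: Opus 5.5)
The plan is to reduce the statement to the congruence invariance of the classical Loewner order recorded in Remark \ref{LoewnerOrderP}. We transport everything through the map $\X \mapsto \J\X$, which is how $\preceq_{\J}$ is defined.

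First, compute $\J\left(\C^{\sharp}\X\C\right)$. Since $\C^{\sharp} = \J\C^{*}\J$ and $\J^{2} = \Id_{n}$, we get
\begin{equation*}
\J\C^{\sharp}\X\C = \J\J\C^{*}\J\X\C = \C^{*}\left(\J\X\right)\C\,,
\end{equation*}
and likewise $\J\C^{\sharp}\Y\C = \C^{*}(\J\Y)\C$. The hypothesis $\X \preceq_{\J} \Y$ means $\J\X \preceq \J\Y$, that is, $\J\Y - \J\X \in \mathscr{P}_{0}$. Remark \ref{LoewnerOrderP}, applied with $\M = \C$, then gives $\C^{*}(\J\X)\C \preceq \C^{*}(\J\Y)\C$. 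Concretely, for every $v$,
\begin{equation*}
v^{*}\C^{*}(\J\Y - \J\X)\C v = (\C v)^{*}(\J\Y-\J\X)(\C v) \geq 0\,.
\end{equation*}
By the identity above, this is exactly $\J(\C^{\sharp}\X\C) \preceq \J(\C^{\sharp}\Y\C)$, i.e. $\C^{\sharp}\X\C \preceq_{\J} \C^{\sharp}\Y\C$.

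The only point requiring care is that $\preceq_{\J}$ was defined on $\mathscr{P}_{\J}$, so we should check that $\C^{\sharp}\X\C$ and $\C^{\sharp}\Y\C$ lie there. When $\C \in \GL(n)$, apply Proposition \ref{PropositionPropertiesSRJ}(1) with $g = \C^{\sharp}$. This $g$ is invertible, and $g^{\sharp} = (\C^{\sharp})^{\sharp} = \C$, so $\C^{\sharp}\X\C = g\X g^{\sharp} \in \mathscr{P}_{\J}$, and similarly for $\Y$.

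For singular $\C$, the matrices $\C^{\sharp}\X\C$ are still $\J$-Hermitian, since $(\C^{\sharp}\X\C)^{\sharp} = \C^{\sharp}\X^{\sharp}\C = \C^{\sharp}\X\C$. However, $\J(\C^{\sharp}\X\C) = \C^{*}(\J\X)\C$ is then only positive semi-definite. In that case I would read $\preceq_{\J}$ through its defining condition, namely that the difference $\J\C^{\sharp}\Y\C - \J\C^{\sharp}\X\C$ lies in $\mathscr{P}_{0}$, which is what the computation proves. This matches how Remark \ref{LoewnerOrderP} is stated for arbitrary $\M \in \Mat(n)$.

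I expect this domain issue to be the only obstacle; the rest is a one-line identity.
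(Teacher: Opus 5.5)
Your proposal is correct and follows essentially the same route as the paper: use the identity $\J(\C^{\sharp}\X\C)=\C^{*}(\J\X)\C$, then apply the congruence invariance of the Loewner order from Remark \ref{LoewnerOrderP}. Your extra check that $\C^{\sharp}\X\C\in\mathscr{P}_{\J}$ for invertible $\C$ (via Proposition \ref{PropositionPropertiesSRJ}(1) with $g=\C^{\sharp}$), and your reading of $\preceq_{\J}$ through its defining condition when $\C$ is singular, address a domain subtlety that the paper's proof does not discuss.
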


\begin{proof}

Assume $\X \preceq_{\J} \Y$. Then $\J\X \preceq \J\Y$ in $\mathscr{P}$. It follows from Remark \ref{LoewnerOrderP} that for all $\C \in \Mat(n)$, we have
\begin{equation*}
\C^{*}(\J\X)\C \preceq \C^{*}(\J\Y)\C\,.
\end{equation*}
Using that $\C^{\sharp} = \J^{-1}\C^{*}\J$, we get
\begin{equation*}
\C^{*}(\J\X)\C = \J(\C^{\sharp}\X\C)\,, \qquad \C^{*}(\J\Y)\C = \J(\C^{\sharp}\Y\C)\,.
\end{equation*}
Therefore,
\begin{equation*}
\J(\C^{\sharp}\X\C) \preceq \J(\C^{\sharp}\Y\C)\,.
\end{equation*}
By definition of $\preceq_{\J}$, this is equivalent to
\begin{equation*}
\C^{\sharp}\X\C \preceq_{\J} \C^{\sharp}\Y\C\,.
\end{equation*}

\end{proof}

\noindent The following lemma follows from the definition of the Loewner order. We record it for later use.

\begin{lemma}

If $\X\,, \Y \in \mathscr{P}_{\J}$ are such that $\X \preceq_{\J} \Y$, then 
\begin{equation*}
\X^{-1} \succeq_{\J} \Y^{-1}\,.
\end{equation*}

\label{LemmaInverse}

\end{lemma}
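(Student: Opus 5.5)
The plan is to transport the statement to the classical cone $\mathscr{P}$ through the map $\X \mapsto \J\X$, and then invoke the classical fact that inversion reverses the Loewner order on $\mathscr{P}$.

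First, I will compute $\J$ times the inverses. Write $\P := \J\X$ and $\Q := \J\Y$, both in $\mathscr{P}$, so that the hypothesis reads $\P \preceq \Q$. As in the proof of Lemma \ref{InversePJ}, $\X^{-1} = \P^{-1}\J$, hence
\begin{equation*}
\J\X^{-1} = \J\P^{-1}\J\,, \qquad \J\Y^{-1} = \J\Q^{-1}\J\,.
\end{equation*}
Lemma \ref{InversePJ} already guarantees that $\X^{-1}$ and $\Y^{-1}$ lie in $\mathscr{P}_{\J}$, so the comparison $\preceq_{\J}$ between them makes sense.

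Second, I will show $\Q^{-1} \preceq \P^{-1}$ in $\mathscr{P}$. Congruence by $\Q^{-\frac{1}{2}}$ (Remark \ref{LoewnerOrderP}) turns $\P \preceq \Q$ into $\C := \Q^{-\frac{1}{2}}\P\Q^{-\frac{1}{2}} \preceq \Id_{n}$. Since $\C \in \mathscr{P}$ is diagonalizable by a unitary with positive real eigenvalues, all of them lie in $(0\,,1]$. Hence $\C^{-1} \succeq \Id_{n}$, which means $\Q^{\frac{1}{2}}\P^{-1}\Q^{\frac{1}{2}} \succeq \Id_{n}$. Congruence by $\Q^{-\frac{1}{2}}$ again gives $\P^{-1} \succeq \Q^{-1}$. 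For $\mathbb{D} = \mathbb{H}$ this uses the spectral theorem for quaternionic Hermitian matrices, which the paper relies on through Appendix \ref{AppendixA}. Alternatively, one can cite operator monotonicity of $t \mapsto -t^{-1}$ from \cite{BHATIA}.

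Third, I will conjugate by $\J$. Applying Remark \ref{LoewnerOrderP} with $\M = \J = \J^{*}$ yields
\begin{equation*}
\J\Q^{-1}\J \preceq \J\P^{-1}\J\,,
\end{equation*}
that is, $\J\Y^{-1} \preceq \J\X^{-1}$. By the definition of $\preceq_{\J}$, this is exactly $\Y^{-1} \preceq_{\J} \X^{-1}$.

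The only nontrivial step is the classical order reversal for inversion in the second step. The main point of care is not to drop the $\J$ conjugation: $\J\X^{-1}$ equals $\J(\J\X)^{-1}\J$, not $(\J\X)^{-1}$. This is harmless because congruence by the unitary $\J$ preserves $\preceq$.
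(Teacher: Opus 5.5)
Your proof is correct and takes the route the paper intends. The paper records this lemma without proof, saying only that it follows from the definition of $\preceq_{\J}$: transport to $\mathscr{P}$ via $\X \mapsto \J\X$ and use that inversion reverses the Loewner order. Your argument supplies those details, including the observation that $\J\X^{-1} = \J(\J\X)^{-1}\J$ rather than $(\J\X)^{-1}$, which you correctly handle by congruence with $\J = \J^{*}$.
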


\section{Geodesics on the cone $\mathscr{P}_{\J}$}

\label{SectionFour}

We start this section by recalling some classical results on the cone
$\mathscr{P}$ of positive Hermitian matrices (see \cite{BHATIA} in the case $\mathbb{D} \in \left\{\mathbb{R}\,, \mathbb{C}\right\}$ and Appendix \ref{AppendixA} for $\mathbb{D} = \mathbb{H}$). For each $\P \in \mathscr{P}$, we identify the tangent space $\T_{\P}\mathscr{P}$ with $\mathfrak{p}$. The Riemannian metric on $\mathscr{P}$ is defined by
\begin{equation}
\langle \U\,, \V\rangle_{\P} = \widetilde{\tr}\left(\P^{-1}\U\P^{-1}\V\right)\,,
\qquad \left(\U\,, \V \in \T_{\P}\left(\mathscr{P}\right) \cong \mathfrak{p}\right)\,.
\label{RiemannianMetricP}
\end{equation}
with $\widetilde{\tr}: \mathbb{D} \to \mathbb{R}$ given by
\begin{equation*}
\widetilde{\tr}(\X) = \begin{cases} \tr(\X) & \text{ if } \mathbb{D} \in \left\{\mathbb{R}\,, \mathbb{C}\right\} \\ \trd(\X) & \text{ if } \mathbb{D} = \mathbb{H} \end{cases}
\end{equation*}
where $\trd$ is the reduced trace defined in Equation \ref{ReducedTrace}\,.

\noindent As explained in \cite{BHATIA} and Theorem \ref{MainTheorem},the unique geodesic joining $\P$ and $\Q$ in $\mathscr{P}$ is
\begin{equation}
\gamma(t) = \P^{\frac{1}{2}}\left(\P^{-\frac{1}{2}}\Q\P^{-\frac{1}{2}}\right)^{t} \P^{\frac{1}{2}}\,, \qquad \left(t \in \left[0\,, 1\right]\right)\,.
\label{GeodesicInP}
\end{equation}

\noindent For all $g \in \GL(n)$ and $\X \in \mathscr{P}$, we have $g\X g^{*} \in \mathscr{P}$. For all $\Q \in \GL(n)$, we denote by $\zeta_{\Q}$ the map
\begin{equation*}
\zeta_{\Q}: \mathscr{P} \ni \P \longmapsto \Q\P\Q^{*} \in \mathscr{P}\,.
\end{equation*}
For all $\P \in \mathscr{P}$, we have
\begin{equation*}
\langle (d\zeta_{\Q})_{\P}(\U)\,, (d\zeta_{\Q})_{\P}(\V)\rangle_{\zeta_{\Q}(\P)}
= \langle \U\,, \V\rangle_{\P}\,, \qquad \left(\U\,, \V \in \mathfrak{p} = \T_{\P}(\mathscr{P})\right)\,.
\end{equation*}
i.e. the Riemannian form is $\GL(n)$-invariant. 

\begin{remark}

The explicit form of the geodesics on $\mathscr{P}$ can be obtained using the Levi-Civita connection associated with the $\langle\cdot\,, \cdot\rangle$ defined in Equation \eqref{RiemannianMetricP}. 

\noindent As explained in \cite{LEE} (in the case $\mathbb{D} \in \left\{\mathbb{R}\,, \mathbb{C}\right\}$ - the proof in the quaternionic case is similar), the Levi-Civita connection on $\mathscr{P}$ is given by
\begin{equation*}
\nabla_{\U}(\V) = \D_{\U}(\V) - \frac{1}{2}\left(\U\P^{-1}\V + \V\P^{-1}\U\right)\,,
\end{equation*}
where $\D_{\U}(\V)$ denotes the directional derivative of the vector field $\V$
in the direction $\U$. A smooth curve $\gamma : \I \to \mathscr{P}$ is a geodesic if and only if it satisfies the geodesic equation
\begin{equation*}
\nabla_{\dot{\gamma}(t)}(\dot{\gamma}(t)) = 0\,,
\end{equation*}
which gives the second order differential equation
\begin{equation}
\ddot{\gamma}(t) = \dot{\gamma}(t)\gamma(t)^{-1}\dot{\gamma}(t)\,.
\label{ODEP}
\end{equation}
The existence and uniqueness of solutions of \eqref{ODEP}  follows from standard ODE theory on manifolds. The last step is to show that the curve $\gamma$ given in Equation \eqref{GeodesicInP} satisfies \eqref{ODEP}. Let $\L = \log\left(\P^{-\frac{1}{2}}\Q\P^{-\frac{1}{2}}\right) \in \mathfrak{p}$, i.e. $\gamma(t) = \P^{\frac{1}{2}}\exp(t\L)\P^{\frac{1}{2}}$. Since $\L$ is constant and $\frac{d}{dt}\exp(t\L)=\exp(t\L)\L$, we get:
\begin{equation*}
\dot{\gamma}(t) = \P^{\frac{1}{2}}\left(\frac{d}{dt}\exp(t\L)\right)\P^{\frac{1}{2}} = \P^{\frac{1}{2}}\exp(t\L)\L\P^{\frac{1}{2}}\,,
\end{equation*}
\begin{equation*}
\ddot{\gamma}(t) = \P^{\frac{1}{2}}\left(\frac{d}{dt}\big(\exp(t\L)\L\right)\P^{\frac{1}{2}}
= \P^{\frac{1}2}\exp(t\L)\L^{2}\P^{\frac{1}{2}}\,,
\end{equation*}
and
\begin{equation*}
\gamma(t)^{-1} =\left(\P^{\frac{1}{2}}\exp(t\L)\P^{\frac{1}{2}}\right)^{-1} = \P^{-\frac{1}{2}}\exp(-t\L)\P^{-\frac{1}{2}}\,.
\end{equation*}
Therefore
\begin{eqnarray*}
\dot{\gamma}(t)\gamma(t)^{-1}\dot{\gamma}(t) & = & \left(\P^{\frac{1}{2}}\exp(t\L)\L\P^{\frac{1}{2}}\right) \left(\P^{-\frac{1}{2}}\exp(-t\L)\P^{-\frac{1}{2}}\right) \left(\P^{\frac{1}{2}}\exp(t\L)\L\P^{\frac{1}{2}}\right) \\
& = & \P^{\frac{1}{2}}\exp(t\L)\L\exp(-t\L)\exp(t\L)\L\P^{\frac{1}{2}} \\
& = & \P^{\frac{1}{2}}\exp(t\L)\L^{2}\P^{\frac{1}{2}} = \ddot{\gamma}(t)\,,
\end{eqnarray*}
where we used $\exp(-t\L)\exp(t\L) = \Id_{n}$ and the fact that $\L$ commutes with $\exp(t\L)$\,.

\end{remark}

\noindent We now construct a Riemannian metric on $\mathscr{P}_{\J}$ by using the metric $\langle\cdot\,, \cdot\rangle$ on $\mathscr{P}$. Let $\Phi_{\J}: \mathscr{P}_{\J} \to \mathscr{P}$ be the map $\Phi_{\J}(\X) = \J\X$. Let $\omega = \Phi^{*}_{\J}(\langle\cdot\,, \cdot\rangle)$ be the pull-back of the Riemannian metric to $\mathscr{P}_{\J}$. By definition, for all $\P \in \mathscr{P}_{\J}$, we have
\begin{equation*}
\omega_{\P}(\U\,, \V) = \langle \J\U\,, \J\V\rangle_{\J\P}\,, \qquad \left(\U\,, \V \in \mathfrak{p}_{\J}\right)\,. 
\end{equation*}
In particular, we get
\begin{eqnarray*}
\omega_{\P}(\U\,, \V) & = & \langle\J\U\,, \J\V\rangle_{\J\P} = \widetilde{\tr}\left((\J\P)^{-1}\J\U(\J\P)^{-1}\J\V\right) \\
& = & \widetilde{\tr}(\P^{-1}\J^{-1}\J\U\P^{-1}\J^{-1}\J\V) = \widetilde{\tr}(\P^{-1}\U\P^{-1}\V)\,,
\end{eqnarray*}
i.e.
\begin{equation*}
\omega_{\P}(\U\,, \V) = \widetilde{\tr}(\P^{-1}\U\P^{-1}\V)\,, \qquad \qquad \left(\P \in \mathscr{P}_{\J}\,, \U\,, \V \in \mathfrak{p}_{\J}\right)\,.
\end{equation*}

\noindent The group $\GL(n)$ acts on $\mathscr{P}_{\J}$ by $g \cdot \P = g\P g^{\sharp}$ (see Proposition \ref{PropositionPropertiesSRJ}). The form $\omega$ is $\GL(n)$-invariant, i.e.
\begin{equation*}
\omega_{g\X g^{\sharp}}\left(g\U g^{\sharp}\,, g\V g^{\sharp}\right) = \omega_{\X}(\U\,, \V) \qquad \qquad \left(\X \in \mathscr{P}_{\J}\,, \U\,, \V \in \mathfrak{p}_{\J}\right)\,.
\end{equation*}
Indeed,
\begin{eqnarray*}
& &\omega_{g\X g^{\sharp}}\left(g\U g^{\sharp}\,, g\V g^{\sharp}\right)
= \widetilde{\tr}\left((g\X g^{\sharp})^{-1}g\U g^{\sharp}(g\X g^{\sharp})^{-1}g\V g^{\sharp}\right) \\ 
& = & \widetilde{\tr}((g^{\sharp})^{-1}\X^{-1}g^{-1}g\U g^{\sharp}(g^{\sharp})^{-1}\X^{-1} g^{-1}g\V g^{\sharp}) = \widetilde{\tr}((g^{\sharp})^{-1}\X^{-1}\U\X^{-1}\V g^{\sharp}) \\ 
& = & \widetilde{\tr}(\X^{-1}\U\X^{-1}\V) = \omega_{\X}(\U\,, \V)\,.
\end{eqnarray*}

\noindent Moreover, the form is by construction positive. Indeed, $\langle\cdot,, \cdot\rangle$ is positive and for all $\U \in \mathfrak{p}_{\J}$ non-zero and $\P \in \mathscr{P}_{\J}$, we have
\begin{equation*}
\omega_{\P}(\U\,, \U) = \langle\J\U\,, \J\U\rangle_{\J\P} > 0\,.
\end{equation*}

\noindent We now give an analogue of Equation \eqref{GeodesicInP} on the cone $\mathscr{P}_{\J}$.

\begin{theo}
Let $\A,\B\in \mathscr{P}_{\J}$. The map $\gamma: \left[0\,, 1\right] \to \mathscr{P}_{\J}$ given by
\begin{equation}
\gamma(t) := \A^{\frac{1}{2}}_{\J} \bullet \left(\A^{-\frac{1}{2}}_{\J} \bullet \B \bullet \A^{-\frac{1}{2}}_{\J}\right)^{t}_{\J} \bullet \A^{\frac{1}{2}}_{\J}\,, \qquad \left(t \in \left[0\,, 1\right]\right)
\label{GeodesicInPJ}
\end{equation}
is the equation of the geodesic between $\A$ and $\B$ in $\mathscr{P}_{\J}$ with respect to $\omega$ (in particular, $\gamma(0) = \A$ and $\gamma(1) = \B$)\,.

\label{TheoremGeodesicsPJ}

\end{theo}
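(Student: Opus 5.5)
The plan is to transport everything through the isometry $\Phi_{\J}:\mathscr{P}_{\J}\to\mathscr{P}$, $\X\mapsto\J\X$. Since $\omega=\Phi_{\J}^{*}\langle\cdot\,,\cdot\rangle$ by definition, $\Phi_{\J}$ is an isometry of Riemannian manifolds. Hence a curve $\gamma$ in $\mathscr{P}_{\J}$ is a geodesic for $\omega$ if and only if $\J\gamma$ is a geodesic in $\mathscr{P}$. It therefore suffices to show two things:
\begin{equation*}
\J\gamma(t)=\P^{\frac{1}{2}}\left(\P^{-\frac{1}{2}}\Q\P^{-\frac{1}{2}}\right)^{t}\P^{\frac{1}{2}}\,,
\end{equation*}
where $\P:=\J\A\in\mathscr{P}$ and $\Q:=\J\B\in\mathscr{P}$. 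Then we invoke Equation \eqref{GeodesicInP}, which gives the unique geodesic in $\mathscr{P}$ from $\P$ to $\Q$. The endpoint conditions $\gamma(0)=\A$ and $\gamma(1)=\B$ then follow automatically, or can be checked directly using Proposition \ref{PropositionSquareRoot}(2) and (4).

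The computation rests on Proposition \ref{PropositionSquareRoot}(1), namely $\X^{s}_{\J}=\J(\J\X)^{s}$ for $\X\in\mathscr{P}_{\J}$. This gives $\A^{\pm\frac{1}{2}}_{\J}=\J\P^{\pm\frac{1}{2}}$. For $s=-\tfrac12$ one should note that $\A^{-\frac12}_{\J}$ is read as $\exp_{\J}(-\tfrac12\log_{\J}\A)$, which agrees with $(\A^{\frac12}_{\J})^{-1}$ only up to the subtleties of Lemma \ref{InversePJ}. We will use the definition via $\X^{t}_{\J}$ with $t=-\tfrac12$, since that is what the formula means.

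First compute the inner term:
\begin{equation*}
\C:=\A^{-\frac{1}{2}}_{\J}\bullet\B\bullet\A^{-\frac{1}{2}}_{\J}=\J\P^{-\frac{1}{2}}\J\B\J\J\P^{-\frac{1}{2}}=\J\P^{-\frac{1}{2}}\Q\P^{-\frac{1}{2}}\,.
\end{equation*}
Thus $\J\C=\P^{-\frac12}\Q\P^{-\frac12}\in\mathscr{P}$, so $\C\in\mathscr{P}_{\J}$ and the $\J$-power is defined. Again by Proposition \ref{PropositionSquareRoot}(1), $\C^{t}_{\J}=\J(\P^{-\frac12}\Q\P^{-\frac12})^{t}$. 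Next assemble the outer product:
\begin{equation*}
\gamma(t)=\J\P^{\frac12}\,\J\,\J(\P^{-\frac12}\Q\P^{-\frac12})^{t}\,\J\,\J\P^{\frac12}=\J\P^{\frac12}(\P^{-\frac12}\Q\P^{-\frac12})^{t}\P^{\frac12}\,.
\end{equation*}
Multiplying by $\J$ gives exactly the classical geodesic. This also shows $\gamma(t)\in\mathscr{P}_{\J}$ for all $t$, since $\mathscr{P}_{\J}=\J\mathscr{P}$.

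The main point needing care is not the algebra but the justification that geodesics correspond under $\Phi_{\J}$. Here $\Phi_{\J}$ is the restriction of a linear isomorphism $\mathfrak{p}_{\J}\to\mathfrak{p}$ mapping the open set $\mathscr{P}_{\J}$ diffeomorphically onto $\mathscr{P}$, with differential $\U\mapsto\J\U$. Pullback metrics make it an isometry, and isometries carry Levi-Civita connections and hence geodesics to one another. I would also remark, for completeness, that the uniqueness of the geodesic between $\A$ and $\B$ transfers from the uniqueness in $\mathscr{P}$ (Equation \eqref{GeodesicInP} and the ODE \eqref{ODEP}). Alternatively, one can directly verify that $\gamma$ satisfies $\ddot\gamma=\dot\gamma\gamma^{-1}\dot\gamma$, since $\J\gamma$ satisfies \eqref{ODEP} and the factors of $\J$ cancel: $(\J\dot\gamma)(\J\gamma)^{-1}(\J\dot\gamma)=\J\dot\gamma\gamma^{-1}\dot\gamma$.
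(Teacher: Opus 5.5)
Your proposal is correct and follows essentially the same route as the paper: use $\X^{t}_{\J}=\J(\J\X)^{t}$ to show $\gamma(t)=\J\delta(t)$, where $\delta$ is the classical geodesic from $\J\A$ to $\J\B$ in $\mathscr{P}$, and conclude because the isometry $\Phi_{\J}$ carries geodesics to geodesics. Your reading of $\A^{-\frac{1}{2}}_{\J}$ as $\J(\J\A)^{-\frac{1}{2}}$ (the $\bullet$-inverse of $\A^{\frac{1}{2}}_{\J}$, not its matrix inverse) matches the paper's usage, and your closing ODE check is the content of the remark that follows the paper's proof.
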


\noindent Before proving Theorem \ref{TheoremGeodesicsPJ}, we recall the following lemma\,.

\begin{lemma}

Let $\left(\M\,, g\right)$ and $\left(\N\,, h\right)$ be two smooth Riemannian manifolds, and let $\Phi: \M \to \N$ be an isometry (i.e. a diffeomorphism such that $g = \Phi^{*}h$). Then $\Phi$ maps geodesics in $\M$ to geodesics in $\N$. In other words, $\gamma$ is a geodesic in $\M$ if and only if $\Phi \circ \gamma$ is a geodesic in $\N$\,.

\label{LemmaRiemannianManifolds}

\end{lemma}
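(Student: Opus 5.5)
The plan is to show that $\Phi$ carries the Levi-Civita connection of $(\M, g)$ to that of $(\N, h)$, and then read off the geodesic equation on both sides. Let $\nabla^{\M}$ and $\nabla^{\N}$ denote these connections. Vector fields on $\N$ are transported to $\M$ by $\Phi^{*}\Y := d\Phi^{-1}\circ \Y \circ \Phi$, which makes sense because $\Phi$ is a diffeomorphism. With this, define a connection $\widetilde{\nabla}$ on $\N$ by
\begin{equation*}
\widetilde{\nabla}_{\X}\Y := \Phi_{*}\left(\nabla^{\M}_{\Phi^{*}\X}\Phi^{*}\Y\right)\,.
\end{equation*}

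The first step is to check that $\widetilde{\nabla}$ is an affine connection: it is $C^{\infty}$-linear in $\X$ and satisfies the Leibniz rule in $\Y$. This follows because $\Phi^{*}(f\Y) = (f\circ\Phi)\Phi^{*}\Y$ and $(\Phi^{*}\X)(f\circ \Phi) = (\X f)\circ\Phi$.

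The second step is to check that $\widetilde{\nabla}$ is torsion-free and compatible with $h$. Torsion-freeness uses the naturality of the Lie bracket, $\Phi_{*}[\X', \Y'] = [\Phi_{*}\X', \Phi_{*}\Y']$. Metric compatibility uses $g = \Phi^{*}h$, which gives
\begin{equation*}
h(\X, \Y)\circ\Phi = g(\Phi^{*}\X, \Phi^{*}\Y)\,,
\end{equation*}
together with the compatibility of $\nabla^{\M}$ with $g$. By the uniqueness part of the fundamental theorem of Riemannian geometry, it follows that $\widetilde{\nabla} = \nabla^{\N}$.

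The third step passes to curves. Let $\D^{\M}_{t}$ and $\D^{\N}_{t}$ be the covariant derivatives along curves induced by the two connections. For a curve $\gamma$ in $\M$ and a vector field $\V$ along $\gamma$, I will show that $d\Phi(\D^{\M}_{t}\V) = \D^{\N}_{t}(d\Phi\circ \V)$. Both sides are characterised by the same axioms: linearity, the Leibniz rule, and agreement with the connection on extendible fields. Hence equality holds on extendible fields by the second step, and then in general by working in a local frame. Applying this with $\V = \dot\gamma$ and using $(\Phi\circ\gamma)^{\cdot} = d\Phi(\dot\gamma)$, I obtain
\begin{equation*}
\D^{\N}_{t}(\Phi\circ\gamma)^{\cdot} = d\Phi\left(\D^{\M}_{t}\dot\gamma\right)\,.
\end{equation*}
Since $d\Phi$ is injective at every point, the left side vanishes exactly when the right side does. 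So $\gamma$ is a geodesic if and only if $\Phi\circ\gamma$ is one.

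The main technical point is the uniqueness argument in the second step, specifically the bookkeeping in the metric-compatibility computation. As an alternative route that avoids connections, I would note that $\Phi$ preserves the energy $\frac{1}{2}\int |\dot\gamma|^{2}\,dt$ of every curve. It also gives a bijection between variations with fixed endpoints in $\M$ and in $\N$. Since geodesics are exactly the critical points of the energy, they correspond under $\Phi$. In the present application, $\Phi = \Phi_{\J}$ is the restriction of a linear map, and $\omega = \Phi_{\J}^{*}\langle\cdot\,, \cdot\rangle$ holds by construction, so the lemma applies directly.
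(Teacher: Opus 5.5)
Your proof is correct and is essentially the standard argument behind the paper's citation of Lee; the paper gives no argument of its own. That argument pushes the Levi-Civita connection forward along $\Phi$, identifies it with the Levi-Civita connection of $h$ by torsion-freeness, metric compatibility and uniqueness, transfers this to covariant derivatives along curves to get $D_{t}(\Phi\circ\gamma)^{\cdot} = d\Phi(D_{t}\dot{\gamma})$, and concludes from the injectivity of $d\Phi$, exactly as you do.
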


\begin{proof}

This is a standard result in Riemannian geometry; see \cite[Proposition~5.4]{LEE}\,.

\end{proof}

\begin{proof}[Proof of Theorem \ref{TheoremGeodesicsPJ}]

Let $\delta$ be the equation of the geodesic between $\J\A$ and $\J\B$ in $\mathscr{P}$. In particular, using Equation \eqref{GeodesicInP}, we have
\begin{equation*}
\delta(t) = (\J\A)^{\frac{1}{2}}\left((\J\A)^{-\frac{1}{2}}(\J\B)(\J\A)^{-\frac{1}{2}}\right)^{t} (\J\A)^{\frac{1}{2}}\,.
\end{equation*}
According to Lemma \ref{LemmaRiemannianManifolds}, it is enough to prove that $\gamma(t) = \J\delta(t)$ for all $t \in \left[0\,, 1\right]$. From Proposition \ref{PropositionSquareRoot}, we have
\begin{equation*}
\X^{t}_{\J} = \J(\J\X)^{t}\,, \qquad \quad \left(\X \in \mathscr{P}_{\J}\,, t \in \mathbb{R}\right)\,.
\end{equation*}
Therefore, $\A^{\frac{1}{2}}_{\J} = \J(\J\A)^{\frac{1}{2}}$ and $\A^{-\frac{1}{2}}_{\J} = \J(\J\A)^{-\frac{1}{2}}$. Moreover,
\begin{eqnarray*}
\A^{-\frac{1}{2}}_{\J} \bullet \B \bullet \A^{-\frac{1}{2}}_{\J} & = & \left(\J(\J\A)^{-\frac{1}{2}}\right)(\J\B\J)\left(\J(\J\A)^{-\frac{1}{2}}\right) = \J(\J\A)^{-\frac{1}{2}}\left(\J\B\J\J\right)(\J\A)^{-\frac{1}{2}} \\
& = & \J(\J\A)^{-\frac{1}{2}}(\J\B)(\J\A)^{-\frac{1}{2}}\,.
\end{eqnarray*}
Then
\begin{eqnarray*}
\gamma(t) & = & \A^{\frac{1}{2}}_{\J}\bullet\left(\A^{-\frac{1}{2}}_{\J}\bullet\B\bullet\A^{-\frac{1}{2}}_{\J}\right)^{t}_{\J}\bullet\A^{\frac{1}{2}}_{\J} \\ 
& = & \J(\J\A)^{\frac{1}{2}}\J^{2}\left(\J^{2}(\J\A)^{-\frac{1}{2}}(\J\B)(\J\A)^{-\frac{1}{2}}\right)^{t}\J\left(\J(\J\A)^{\frac{1}{2}}\right) \\ 
& = & \J(\J\A)^{\frac{1}{2}}\left((\J\A)^{-\frac{1}{2}}(\J\B)(\J\A)^{-\frac{1}{2}}\right)^{t}(\J\A)^{\frac{1}{2}} \\
& = & \J\delta(t)\,,
\end{eqnarray*}
and the theorem follows\,.

\end{proof}

\begin{remark}

In particular, one can see that the map $\gamma$ given in Equation \eqref{GeodesicInPJ} satisfies the ODE given in Equation \eqref{ODEP}. Indeed, using that $\gamma(t) = \J\delta(t)$ and that $\ddot{\delta}(t) = \dot{\delta}(t)\delta(t)^{-1}\dot{\delta}(t)$, we get that 
$\dot{\gamma}(t) = \J\dot{\delta}(t)$, $\gamma(t)^{-1} = (\J\delta(t))^{-1} = \delta(t)^{-1}\J$. Then

\begin{equation*}
\dot{\gamma}(t)\gamma(t)^{-1}\dot{\gamma}(t) = \left(\J\dot{\delta}(t)\right)\left(\delta(t)^{-1}\J\right)\left(\J\dot{\delta}(t)\right) = \J\dot{\delta}(t)\delta(t)^{-1}\dot{\delta}(t) = \J\ddot{\delta}(t) = \ddot{\gamma}(t)\,,
\end{equation*}
which is exactly the ODE given in Equation \eqref{ODEP}\,.

\end{remark}

\section{Geometric mean on the cone $\mathscr{P}_{\J}$}

\label{SectionFive}

We start this section by recalling the main results of \cite{KuboAndo1980} concerning the geometric mean on $\mathscr{P}$ (note that the results of \cite{KuboAndo1980} are for $\mathbb{D} \in \left\{\mathbb{R}\,, \mathbb{C}\right\}$; an analogue in the quaternionic case can be found in Appendix \ref{AppendixA}). For two matrices $\A\,, \B \in \mathscr{P}$ and $t \in \left[0\,, 1\right]$, we denote by $\A \sharp_{t} \B$ the matrix in $\Mat(n \times n)$ given by
\begin{equation*}
\A \sharp_{t} \B = \A^{\frac{1}{2}}\left(\A^{-\frac{1}{2}}\B\A^{-\frac{1}{2}}\right)^{t}\A^{\frac{1}{2}}\,,
\end{equation*}
and let $\A \sharp \B = \A \sharp_{\frac{1}{2}} \B$. As explained in Equation \eqref{GeodesicInP}, $\A \sharp_{t} \B \in \mathscr{P}$ for all $t \in \left[0\,, 1\right]$. 

\begin{remark}
    
Here are some nice properties satisfied by $\sharp_{t}$ on $\mathscr{P}$\,:
\begin{enumerate}
\item For all $\A\,, \B \in \mathscr{P}$ and $t \in \left(0\,, 1\right)$, $\A \sharp_{t} \A = \A$ and $\A \sharp_{t} \B = \A$ if and only if $\A = \B$\,.
\item For all $\A\,, \B \in \mathscr{P}$, $\A \sharp \B$ is the unique solution of Riccati's equation
\begin{equation*}
\X\A^{-1}\X = \B\,,
\end{equation*}
\item For all $\A\,, \B \in \mathscr{P}$ and $\C \in \GL(n)$, we have
\begin{equation*}
\left(\C^{*}\A\C\right) \sharp \left(\C^{*}\B\C\right) = \C^{*}\left(\A \sharp \B\right)\C\,.
\end{equation*}
\item For all $\A_{1}\,, \A_{2}\,, \B_{1}\,, \B_{2} \in \mathscr{P}$ such that $\A_{1} \preceq \A_{2}$ and $\B_{1} \preceq \B_{2}$, we get
\begin{equation*}
\A_{1} \sharp \B_{1} \preceq \A_{2} \sharp \B_{2}\,.
\end{equation*}
\item For all $\A\,, \B \in \mathscr{P}$, then $\left(\A \sharp \B\right)^{-1} = \A^{-1} \sharp \B^{-1}$\,.
\end{enumerate}

\label{RemarkSharp}

\end{remark}

\noindent The goal of this section is to give an analogue of the geometric mean on our cone $\mathscr{P}_{\J}$. We start with a definition\,.

\begin{definition}
    
Let $\A\,, \B \in \mathscr{P}_{\J}$. For all $t \in \left[0\,, 1\right]$, we denote by $\A \sharp^{\J}_{t} \B$ the element in $\Mat(n \times n)$ given by
\begin{equation*}
\A \sharp^{\J}_{t} \B := \A^{\frac{1}{2}}_{\J}\bullet\left(\A^{-\frac{1}{2}}_{\J}\bullet\B\bullet\A^{-\frac{1}{2}}_{\J}\right)^{t}_{\J}\bullet\A^{\frac{1}{2}}_{\J}\,,
\end{equation*}
and let $\A \sharp^{\J} \B = \A \sharp^{\frac{1}{2}}_{\J} \B$\,.

\label{DefinitionJGeometricMean}

\end{definition}

\begin{remark}

\begin{enumerate}
\item As explained in Theorem \ref{TheoremGeodesicsPJ}, for all $\A\,, \B \in \mathscr{P}_{\J}$, we get $\A \sharp^{\J}_{t} \B \in \mathscr{P}_{\J}$ for all $t \in \left[0\,, 1\right]$\,.
\item It follows from the proof of Theorem \ref{TheoremGeodesicsPJ} that for all $\A\,, \B \in \mathscr{P}_{\J}$, we have
\begin{equation*}
\A \sharp^{\J}_{t} \B = \Phi^{-1}_{\J}\left(\Phi_{\J}(\A) \sharp_{t} \Phi_{\J}(\B)\right)\,, \qquad \qquad \left(t \in \left[0\,, 1\right]\right)\,.
\end{equation*}
\end{enumerate}

\label{RemarkGeometricMeanJ}

\end{remark}

\begin{proposition}[Riccati Equation]
Let $\A\,, \B \in \mathscr{P}_{\J}$. Then $\A \sharp^{\J} \B$ is the unique solution on $\mathscr{P}_{\J}$ of the equation
\begin{equation}
\label{RicattiEquation}
\X\A^{-1}\X = \B\,.
\end{equation}

\label{PropositionRicatti}

\end{proposition}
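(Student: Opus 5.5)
The plan is to transport the equation \eqref{RicattiEquation} to the classical cone $\mathscr{P}$ through the bijection $\Phi_{\J}: \mathscr{P}_{\J} \ni \X \mapsto \J\X \in \mathscr{P}$. There the classical Riccati characterization of Remark \ref{RemarkSharp}(2) applies, and we then pull the answer back.

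First, write $\A = \J\P$ and $\B = \J\Q$ with $\P := \J\A$ and $\Q := \J\B$ in $\mathscr{P}$. Any candidate $\X \in \mathscr{P}_{\J}$ is likewise written $\X = \J\Y$ with $\Y := \J\X \in \mathscr{P}$; this uses Remark \ref{RemarkPositiveMatrix} and the fact that $\Phi_{\J}$ restricts to a bijection $\mathscr{P}_{\J} \to \mathscr{P}$. As in the proof of Lemma \ref{InversePJ}, $\A^{-1} = \P^{-1}\J$. Then
\begin{equation*}
\X\A^{-1}\X = \J\Y\P^{-1}\J\J\Y = \J\left(\Y\P^{-1}\Y\right)\,.
\end{equation*}
Since $\J$ is invertible, $\X\A^{-1}\X = \B$ holds if and only if $\Y\P^{-1}\Y = \Q$. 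Hence $\X \in \mathscr{P}_{\J}$ solves \eqref{RicattiEquation} exactly when $\Y = \J\X \in \mathscr{P}$ solves the classical Riccati equation $\Y\P^{-1}\Y = \Q$.

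Second, invoke Remark \ref{RemarkSharp}(2): the unique solution in $\mathscr{P}$ is $\Y = \P \sharp \Q$. For completeness, and to cover $\mathbb{D} = \mathbb{H}$ (where one may otherwise appeal to Appendix \ref{AppendixA}), I would recall the short argument. Setting $\Z := \P^{-\frac{1}{2}}\Y\P^{-\frac{1}{2}} \in \mathscr{P}$, the equation becomes $\Z^{2} = \P^{-\frac{1}{2}}\Q\P^{-\frac{1}{2}}$. Uniqueness of positive square roots, which follows from the bijectivity of $\exp: \mathfrak{p} \to \mathscr{P}$, then forces $\Z = (\P^{-\frac{1}{2}}\Q\P^{-\frac{1}{2}})^{\frac{1}{2}}$, i.e. $\Y = \P \sharp \Q$.

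Finally, pull back. By Remark \ref{RemarkGeometricMeanJ}(2), $\A \sharp^{\J} \B = \Phi^{-1}_{\J}(\P \sharp \Q) = \J(\P \sharp \Q)$, so the unique solution $\X = \J\Y$ in $\mathscr{P}_{\J}$ is exactly $\A \sharp^{\J} \B$. Existence and uniqueness both transfer because $\Phi_{\J}$ is a bijection between the two cones. The only delicate point is the inverse computation $\A^{-1} = \P^{-1}\J$, in particular not mistaking it for $\J\P^{-1}$; with that in hand the statement reduces entirely to the classical case, so I do not expect a genuine obstacle.
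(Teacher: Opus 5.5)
Your proof is correct, but it takes a different route from the paper. You transport the equation itself. The identity $\X\A^{-1}\X = \J\left(\Y\P^{-1}\Y\right)$, with $\Y = \J\X$ and $\P = \J\A$, shows that $\Phi_{\J}$ maps the $\J$-Riccati equation on $\mathscr{P}_{\J}$ exactly onto the classical one on $\mathscr{P}$. Existence and uniqueness then come directly from the classical case, and Remark \ref{RemarkGeometricMeanJ}(2) identifies the solution with $\A \sharp^{\J} \B$.

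The paper never leaves $\mathscr{P}_{\J}$; it reruns the classical argument inside the $\bullet$-algebra.
\begin{itemize}
\item For existence, it cancels directly using the identities of Proposition \ref{PropositionSquareRoot}: $\A^{-1} = \J\A^{-1}_{\J}\J$, $\A^{-1}_{\J} = \A^{-\frac{1}{2}}_{\J}\bullet\A^{-\frac{1}{2}}_{\J}$ and $\A^{\frac{1}{2}}_{\J}\bullet\A^{-\frac{1}{2}}_{\J} = \J$.
\item For uniqueness, it rewrites the equation as $\left(\A^{-\frac{1}{2}}_{\J}\J\X\J\A^{-\frac{1}{2}}_{\J}\right)^{2}_{\J} = \A^{-\frac{1}{2}}_{\J}\J\B\J\A^{-\frac{1}{2}}_{\J}$, takes $\J$-square roots, and undoes the congruence $\Y \mapsto \C\Y\C^{\sharp}$ with $\C = \A^{\frac{1}{2}}_{\J}\J$.
\end{itemize}

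The paper's version shows that the $\J$-calculus can carry the Pusz--Woronowicz argument verbatim. It needs no appeal to the identity $\gamma = \J\delta$ from the proof of Theorem \ref{TheoremGeodesicsPJ}.

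Your version is shorter and uses the same transport strategy the paper itself uses in Proposition \ref{PropertiesGeometric}. It also makes explicit two facts that the paper's uniqueness step uses tacitly:
\begin{itemize}
\item The conjugated unknown lies in the cone. In your proof this is $\Z \in \mathscr{P}$; in the paper it requires Proposition \ref{PropositionPropertiesSRJ}(1) before Proposition \ref{PropositionSquareRoot}(4) and the $\J$-square root can be applied.
\item Uniqueness of square roots rests on injectivity of $\exp$ on $\mathfrak{p}$, which you justify uniformly, including $\mathbb{D} = \mathbb{H}$.
\end{itemize}
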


\begin{proof}

Firstly, we establish that $\A \sharp^{\J} \B$ is indeed a solution to \eqref{RicattiEquation}. Using Proposition \ref{PropositionSquareRoot}, we get that $\A^{1}_{\J}\J\A^{-1}_{\J} = \J$ (i.e. $\A^{-1} = \J\A^{-1}_{\J}\J$) and $\A^{-1}_{\J} = \A^{-\frac{1}{2}}_{\J}\J\A^{-\frac{1}{2}}_{\J}$. Therefore
\begin{equation*}
\X\A^{-1}\X=\X\J\A_{\J}^{-1}\J\X=\X\J \A^{-\frac{1}{2}}_{\J}\J\A^{-\frac{1}{2}}_{\J}\J\X\,.
\end{equation*}
Thus
\begin{eqnarray*}
& &\left(\A \sharp^{\J} \B\right)\A^{-1}\left(\A \sharp^{\J} \B\right) = \left(\A \sharp^{\J} \B\right)\J\A^{-\frac{1}{2}}_{\J}\J\A^{-\frac{1}{2}}_{\J}\J\left(\A \sharp^{\J} \B\right) \\
& = & \A_{\J}^{\frac{1}{2}}\J\left(\A^{-\frac{1}{2}}_{\J}\J\B\J\A^{-\frac{1}{2}}_{\J}\right)^{\frac{1}{2}}_{\J}\J\left(\A^{-\frac{1}{2}}_{\J}\J\B\J \A^{-\frac{1}{2}}_{\J}\right)^{\frac{1}{2}}_{\J}\J\A^{\frac{1}{2}}_{\J} = \A^{\frac{1}{2}}_{\J}\J\left(\A^{-\frac{1}{2}}_{\J}\J\B\J\A^{-\frac{1}{2}}_{\J}\right)\J\A^{\frac{1}{2}}_{\J} \\
& = & \left(\A^{\frac{1}{2}}_{\J}\J\A^{-\frac{1}{2}}_{\J}\right)\J\B\J\left(\A^{-\frac{1}{2}}_{\J}\J\A^{\frac{1}{2}}_{\J}\right) = \J^{2}\B\J^{2} = \B\,,
\end{eqnarray*}
where the last two equalities follows from Proposition \ref{PropositionSquareRoot}\,. 

\noindent We now prove the unicity of the solution. Assume that $\X$ is such that
\begin{equation*}
\X\J \A^{-\frac{1}{2}}_{\J}\J \A^{-\frac{1}{2}}_{\J}\J\X=\B\,.
\end{equation*}
Then
\begin{equation*}
\left(\A^{-\frac{1}{2}}_{\J}\J\X\J\A^{-\frac{1}{2}}_{\J}\right)\J \left(\A^{-\frac{1}{2}}_{\J}\J\X\J\A^{-\frac{1}{2}}_{\J}\right) = \A^{-\frac{1}{2}}_{\J}\J\B\J\A^{-\frac{1}{2}}_{\J}\,.
\end{equation*}
Using Proposition \ref{PropositionSquareRoot} part (4), we have that
\begin{equation*}
\left(\A^{-\frac{1}{2}}_{\J}\J\X\J\A^{-\frac{1}{2}}_{\J}\right)^{2}_{\J} = \A^{-\frac{1}{2}}_{\J}\J\B\J\A^{-\frac{1}{2}}_{\J}\,.
\end{equation*}
Now, we take the $\J$-square root on both sides to obtain
\begin{equation*}
\A^{-\frac{1}{2}}_{\J}\J\X\J \A^{-\frac{1}{2}}_{\J}=(\A^{-\frac{1}{2}}_{\J}\J\B\J \A^{-\frac{1}{2}}_{\J})_{\J}^{\frac{1}{2}}\,.
\end{equation*}
To solve for $\X$ while staying inside $\mathscr{P}_{\J}$, we apply the congruence map $\Y \to \C\Y\C^{\sharp}$ with $\C = \A^{\frac{1}{2}}_{\J}\J$, which preserves $\mathscr{P}_{\J}$. Finally
\begin{equation*}
\J^{2}\X\J^{2}= \A \sharp^{\J} \B\,,
\end{equation*}
i.e. $\X = \A \sharp^{\J} \B$\,.
\end{proof}

\noindent We now prove some properties of $\sharp^{\J}$\,.

\begin{lemma}

For all $\A\,, \B \in \mathscr{P}_{\J}$, we obtain
\begin{enumerate}
\item $\A \sharp^{\J} \B = \B \sharp^{\J} \A$\,,
\item $\left(\A \sharp^{\J} \B\right)^{-1} = \A^{-1} \sharp^{\J} \B^{-1}$\,.
\end{enumerate}
    
\end{lemma}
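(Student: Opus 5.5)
The plan is to transport both identities to the classical cone $\mathscr{P}$ through $\Phi_{\J}$, using Remark \ref{RemarkGeometricMeanJ}(2):
\begin{equation*}
\A \sharp^{\J} \B = \J\left((\J\A) \sharp (\J\B)\right)\,, \qquad \left(\A\,, \B \in \mathscr{P}_{\J}\right)\,.
\end{equation*}
Both statements will then reduce to known properties of the classical mean $\sharp$ on $\mathscr{P}$.

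\textbf{Part (1).} I would use the symmetry $\P \sharp \Q = \Q \sharp \P$ of the classical geometric mean. The cleanest justification goes through the Riccati characterization in Remark \ref{RemarkSharp}(2). Set $\M = \P \sharp \Q$. From $\M\P^{-1}\M = \Q$ we get $\M^{-1} = \M^{-1}\Q\M^{-1}\P^{-1}$ after rearranging. This gives $\M\Q^{-1}\M = \P$, so $\M$ also solves the Riccati equation attached to $(\Q\,, \P)$. By uniqueness, $\M = \Q \sharp \P$.

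Applying this with $\P = \J\A$ and $\Q = \J\B$ and multiplying by $\J$ yields (1).

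As an alternative staying inside $\mathscr{P}_{\J}$, I could use Proposition \ref{PropositionRicatti}. Let $\X = \A \sharp^{\J} \B$, so $\X\A^{-1}\X = \B$. Inverting gives $\X^{-1}\A\X^{-1} = \B^{-1}$. Hence $\A = \X\B^{-1}\X$, so $\X$ solves the Riccati equation for the pair $(\B\,, \A)$. Uniqueness in Proposition \ref{PropositionRicatti} then gives $\X = \B \sharp^{\J} \A$. I would present this version, since it only needs results already proved in the paper.

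\textbf{Part (2).} I would again use the Riccati characterization. Let $\X = \A \sharp^{\J} \B \in \mathscr{P}_{\J}$. By Lemma \ref{InversePJ}, the matrices $\X^{-1}$, $\A^{-1}$ and $\B^{-1}$ all lie in $\mathscr{P}_{\J}$. Inverting $\X\A^{-1}\X = \B$ gives $\X^{-1}\A\X^{-1} = \B^{-1}$, that is,
\begin{equation*}
\X^{-1}\left(\A^{-1}\right)^{-1}\X^{-1} = \B^{-1}\,.
\end{equation*}
So $\X^{-1}$ is a solution in $\mathscr{P}_{\J}$ of the Riccati equation for the pair $(\A^{-1}\,, \B^{-1})$. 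By the uniqueness in Proposition \ref{PropositionRicatti}, $\X^{-1} = \A^{-1} \sharp^{\J} \B^{-1}$.

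\textbf{Main obstacle.} The only delicate point is making sure uniqueness is applied inside $\mathscr{P}_{\J}$. This requires checking that the candidate solutions ($\X$ in part (1), $\X^{-1}$ in part (2)) belong to $\mathscr{P}_{\J}$. Membership follows from Remark \ref{RemarkGeometricMeanJ}(1) together with Lemma \ref{InversePJ}.

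If one does not want to rely on the uniqueness argument of Proposition \ref{PropositionRicatti}, there is a fallback via $\Phi_{\J}$. Note that $\J\A^{-1} = \J(\J\P)^{-1} = \J\P^{-1}\J$ with $\P = \J\A$. Then use congruence invariance by the unitary $\J$ (Remark \ref{RemarkSharp}(3)) together with the classical identity $(\P \sharp \Q)^{-1} = \P^{-1} \sharp \Q^{-1}$ (Remark \ref{RemarkSharp}(5)). This gives
\begin{equation*}
\A^{-1} \sharp^{\J} \B^{-1} = \J\J(\P^{-1} \sharp \Q^{-1})\J = (\P \sharp \Q)^{-1}\J = \left(\J(\P \sharp \Q)\right)^{-1}\,,
\end{equation*}
as required.
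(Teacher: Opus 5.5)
Your proof is correct. Part (2) is exactly the paper's argument: invert the Riccati equation, note that $\X^{-1}\in\mathscr{P}_{\J}$ solves it for the pair $(\A^{-1},\B^{-1})$, and invoke the uniqueness in Proposition~\ref{PropositionRicatti}.

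For part (1) you use the same tool, Riccati uniqueness in $\mathscr{P}_{\J}$, but with a different candidate, and the difference matters. You observe that $\X^{-1}\A\X^{-1}=\B^{-1}$ already gives $\A=\X\B^{-1}\X$. So $\X$ itself solves the Riccati equation for $(\B,\A)$, and uniqueness finishes.

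The paper instead sets $\Y=\B\X^{-1}\B$ and asserts $\Y\B^{-1}\Y=\A$. That identity is false in general. Multiplying $\X^{-1}\A\X^{-1}=\B^{-1}$ on both sides by $\B$ only gives $\B\X^{-1}\A\X^{-1}\B=\B$. Concretely, for $\A=a\J$ and $\B=b\J$ with $a\neq b$, one gets $\Y\B^{-1}\Y=(b^{2}/a)\J\neq\A$. So the paper's argument for (1) does not go through as written, while yours does.

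Your $\Phi_{\J}$ fallbacks are also correct: classical symmetry for (1), and congruence by $\J$ plus $(\P\sharp\Q)^{-1}=\P^{-1}\sharp\Q^{-1}$ for (2). They mirror how the paper proves Proposition~\ref{PropertiesGeometric}. Their advantage is that they depend only on classical facts, not on the $\J$-Riccati uniqueness.

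One small slip: in your classical symmetry argument the rearranged identity should read $\P^{-1}=\M^{-1}\Q\M^{-1}$, not $\M^{-1}=\M^{-1}\Q\M^{-1}\P^{-1}$. Your conclusion $\M\Q^{-1}\M=\P$ is right, and you do not rely on that version anyway.
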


\begin{proof}

\begin{enumerate}
\item Taking inverses in \eqref{RicattiEquation} yields
\begin{equation*}
\X^{-1}\A\X^{-1} = \B^{-1}\,.
\end{equation*}
Multiplying on the left and right by $\B$ gives
\begin{equation*}
\left(\B\X^{-1}\B\right)\B^{-1}\left(\B\X^{-1}\B\right) = \A\,.
\end{equation*}
Set $\Y := \B\X^{-1}\B$. Since $\mathscr{P}_{\J}$ is stable under inversion and congruence, we have $\Y \in \mathscr{P}_{\J}$. Hence $\Y$ is a solution in $\mathscr{P}_{\J}$ of the Riccati equation $\Z\B^{-1}\Z = \A$. By uniqueness of the solution in $\mathscr{P}_{\J}$, it follows that
\begin{equation*}
\B\sharp^{\J}\A = \Y = \B\X^{-1}\B\,.
\end{equation*}
Applying the same argument with $\A$ and $\B$ interchanged shows that $\X = \A\left(\B \sharp^{\J}\A\right)^{-1}\A$, and therefore
\begin{equation*}
\A \sharp^{\J} \B = \B \sharp^{\J} \A\,.
\end{equation*}
\item Again starting from Equation \eqref{RicattiEquation}, taking inverses gives
\begin{equation*}
\X^{-1}\A\X^{-1} = \B^{-1}\,.
\end{equation*}
Equivalently,
\begin{equation*}
\X^{-1}\left(\A^{-1}\right)^{-1}\X^{-1} = \B^{-1}\,.
\end{equation*}
Thus $\X^{-1} \in \mathscr{P}_{\J}$ is a solution of the Riccati equation associated with the pair $\left(\A^{-1}\,, \B^{-1}\right)$. By uniqueness of the solution in $\mathscr{P}_{\J}$, we conclude that
\begin{equation*}
\A^{-1} \sharp^{\J} \B^{-1} = \X^{-1} = \left(\A \sharp^{\J} \B\right)^{-1}\,.
\end{equation*}
\end{enumerate}
\noindent This completes the proof\,.

\end{proof}

\begin{lemma}

Let $\A, \B \in \mathscr{P}_{\J}$. Then
\begin{equation*}
\A \sharp^{\J} \B = \max\left\{\X \in \mathscr{P}_{\J}\,, \begin{bmatrix} \J\A & \J\X \\ \J\X & \J\B \end{bmatrix} \succeq 0\right\}\,,
\end{equation*}
where the maximum is understood to be with respect to the $\J$-Loewner order $\preceq_{\J}$\,.
    
\end{lemma}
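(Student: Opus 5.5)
The plan is to transport the statement to the classical cone $\mathscr{P}$ through $\Phi_{\J}$ and then invoke the classical extremal characterization of the geometric mean (Ando's theorem). Write $\P := \J\A$, $\Q := \J\B$, and for $\X \in \mathscr{P}_{\J}$ write $\Z := \J\X \in \mathscr{P}$. The block matrix in the statement is then
\begin{equation*}
\begin{bmatrix} \P & \Z \\ \Z & \Q \end{bmatrix}\,,
\end{equation*}
which is Hermitian because $\Z^{*} = \Z$. The set over which we maximize is therefore exactly $\Phi_{\J}^{-1}$ of
\begin{equation*}
\mathscr{S} := \left\{\Z \in \mathscr{P}\,, \begin{bmatrix} \P & \Z \\ \Z & \Q \end{bmatrix} \succeq 0\right\}\,.
\end{equation*}
By definition, $\preceq_{\J}$ is the pullback of $\preceq$ under $\Phi_{\J}$. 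By Remark \ref{RemarkGeometricMeanJ}, we also have $\A \sharp^{\J} \B = \J(\P \sharp \Q)$. It therefore suffices to prove that $\P \sharp \Q = \max \mathscr{S}$ with respect to the Loewner order on $\mathscr{P}$.

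For the classical statement I would use the Schur complement criterion already quoted in the proof of the block parametrization of $\J$-positive matrices. Since $\P > 0$, the block matrix is positive semidefinite if and only if
\begin{equation*}
\Q - \Z\P^{-1}\Z \succeq 0\,.
\end{equation*}
The first step is membership. Take $\Z = \P \sharp \Q$. The classical Riccati identity (Remark \ref{RemarkSharp}(2)) gives $\Z\P^{-1}\Z = \Q$, so the Schur complement vanishes and $\P \sharp \Q \in \mathscr{S}$.

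The second step, maximality, is the main obstacle. Suppose $\Z\P^{-1}\Z \preceq \Q$. I would congruence by $\P^{-\frac{1}{2}}$, which preserves $\preceq$ by Remark \ref{LoewnerOrderP}, and set $\W := \P^{-\frac{1}{2}}\Z\P^{-\frac{1}{2}} \in \mathscr{P}$. The condition becomes
\begin{equation*}
\W^{2} \preceq \P^{-\frac{1}{2}}\Q\P^{-\frac{1}{2}}\,.
\end{equation*}
Operator monotonicity of $t \mapsto t^{\frac{1}{2}}$ (again Remark \ref{LoewnerOrderP}, with $t = \frac{1}{2}$) then yields
\begin{equation*}
\W \preceq \left(\P^{-\frac{1}{2}}\Q\P^{-\frac{1}{2}}\right)^{\frac{1}{2}}\,.
\end{equation*}
Conjugating back by $\P^{\frac{1}{2}}$ gives $\Z \preceq \P \sharp \Q$. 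In the quaternionic case I would rely on the analogues in Appendix \ref{AppendixA} for the Schur complement and the monotonicity of the square root.

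Finally, apply $\J$ to pull everything back. Since $\J(\P \sharp \Q) = \A \sharp^{\J} \B$ lies in the admissible set and $\X \preceq_{\J} \A \sharp^{\J} \B$ for every admissible $\X$, the maximum exists and equals $\A \sharp^{\J} \B$.
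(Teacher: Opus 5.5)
Your proposal is correct and follows essentially the same route as the paper. The Schur complement of the block matrix vanishes at the geometric mean, which gives membership. For maximality, you apply the congruence by $(\J\A)^{-\frac{1}{2}}$, use operator monotonicity of the square root, and undo the congruence to obtain $\J\X \preceq (\J\A)\sharp(\J\B)$. The differences are only presentational: you transport to $\mathscr{P}$ up front, and you note explicitly that $(\J\A)^{-\frac{1}{2}}(\J\X)(\J\A)^{-\frac{1}{2}}$ is positive, which is what lets you identify the square root of its square with itself; the paper leaves that point implicit.
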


\begin{proof}

Notice that the Schur complement of 
\begin{equation*}
\begin{bmatrix} \J\A & \J\X \\ \J\X & \J\B \end{bmatrix}
\end{equation*}
is $\J\B-\left(\J\X\right)\left(\J\A\right)^{-1}\left(\J\X\right) = \J\left(\B - \X\A^{-1}\X\right)$. When $\X = \A \sharp^{\J} \B$, the Schur complement is zero. Since $\J\A > 0$, it holds that 
\begin{equation*}
\begin{bmatrix} \J\A & \J\X \\ \J\X & \J\B \end{bmatrix} \succeq 0\,.
\end{equation*}
On the other hand, if $\X\A^{-1}\X \preceq_{\J} \B$, then 
\begin{equation*}
\J\B \succeq \left(\J\X\right)\left(\J\A\right)^{-1}\left(\J\X\right)\,.
\end{equation*}
This implies that 
\begin{equation*}
\left(\J\A\right)^{-\frac{1}{2}}\left(\J\B\right)\left(\J\A\right)^{-\frac{1}{2}} \succeq \left(\left(\J\A\right)^{-\frac{1}{2}}\left(\J\X\right)\left(\J\A\right)^{-\frac{1}{2}}\right)^{2}\,.
\end{equation*}
By the operator monotonicity of the square root and multiplying on both sides by $\left(\J\A\right)^{\frac{1}{2}}$, this shows that $\J\X \preceq \left(\J\A\right) \sharp \left(\J\B\right) $. Thus $\X \preceq_{\J} \A \sharp^{\J} \B$\,.

\end{proof}

\begin{proposition}
\label{PropertiesGeometric}
Let $t \in \left(0\,, 1\right)$\,.
\begin{enumerate}
\item For all $\A \in \mathscr{P}_{\J}$, we have $\A \sharp^{\J}_{t} \A = \A$. Moreover, we have $\A \sharp^{\J}_{t} \B = \A$ if and only if $\A = \B$\,.
\item For all $\A\,, \B \in \mathscr{P}_{\J}$ and $\alpha, \beta >0$, we have
\begin{equation*}
\left(\alpha\A\right) \sharp^{\J}_{t} \left(\beta\B\right) = \alpha^{1-t}\beta^{t} \A \sharp^{\J}_{t} \B\,.
\end{equation*}
\item For all $\A\,, \B \in \mathscr{P}_{\J}$, we have
\begin{equation*}
\A \sharp^{\J}_{t} \B = \B \sharp_{1-t}^{\J} \A\,.
\end{equation*}
\item If $\A\,, \B\,, \C\,, \D \in \mathscr{P}_{\J}$ are such that $\A \preceq_{\J} \C$ and $\B \preceq_{\J} \D$, then
\begin{equation*}
\A \sharp^{\J}_{t} \B \preceq_{\J} \C \sharp^{\J}_{t} \D\,.
\end{equation*}
\end{enumerate}

\end{proposition}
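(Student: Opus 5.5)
The plan is to reduce every item to the corresponding classical statement on $\mathscr{P}$ via the identity of Remark \ref{RemarkGeometricMeanJ}(2):
\begin{equation*}
\A \sharp^{\J}_{t} \B = \J\left((\J\A) \sharp_{t} (\J\B)\right)\,, \qquad \left(\A\,, \B \in \mathscr{P}_{\J}\,, t \in \left[0\,, 1\right]\right)\,.
\end{equation*}
This identity appears in the proof of Theorem \ref{TheoremGeodesicsPJ}, where $\gamma(t) = \J\delta(t)$. Once it is in place, each property of $\sharp^{\J}_{t}$ comes from the matching property of $\sharp_{t}$ on $\mathscr{P}$, applied to $\P := \J\A$ and $\Q := \J\B$. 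We then multiply by $\J$ on the left and use that $\Phi_{\J}$ is a bijection and $\preceq_{\J}$ is the pullback of $\preceq$.

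\textbf{Item (1).} We have $\A \sharp^{\J}_{t} \A = \J(\P \sharp_{t} \P) = \J\P = \A$ by Remark \ref{RemarkSharp}(1). Conversely, suppose $\A \sharp^{\J}_{t} \B = \A$. Multiplying by $\J$ gives $\P \sharp_{t} \Q = \P$, so $\P = \Q$ by the same remark, and hence $\A = \B$. I would justify that remark directly rather than cite it. From $\P^{\frac{1}{2}}(\P^{-\frac{1}{2}}\Q\P^{-\frac{1}{2}})^{t}\P^{\frac{1}{2}} = \P$ we get $(\P^{-\frac{1}{2}}\Q\P^{-\frac{1}{2}})^{t} = \Id_{n}$. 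Taking $\log$, which is injective on $\mathscr{P}$, gives $t\log(\P^{-\frac{1}{2}}\Q\P^{-\frac{1}{2}}) = 0$. Since $t \neq 0$, this forces $\P^{-\frac{1}{2}}\Q\P^{-\frac{1}{2}} = \Id_{n}$.

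\textbf{Item (2).} Since $\J(\alpha\A) = \alpha\P$, it suffices to show $(\alpha\P) \sharp_{t} (\beta\Q) = \alpha^{1-t}\beta^{t}\,\P \sharp_{t} \Q$. This follows from $(\alpha\P)^{\pm\frac{1}{2}} = \alpha^{\pm\frac{1}{2}}\P^{\pm\frac{1}{2}}$ and $(c\,\M)^{t} = c^{t}\M^{t}$ for $c > 0$. The latter holds by Remark \ref{RemarkSquareRootInP}(2), since $c\Id_{n}$ commutes with $\M$.

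\textbf{Item (3).} We need $\P \sharp_{t} \Q = \Q \sharp_{1-t} \P$, which I expect to be the main obstacle, since it is not recorded in the paper. I would prove it through the ODE characterization of geodesics. Both sides are curves in $t$ solving \eqref{ODEP}, because reversing time preserves the geodesic equation. They share the endpoints $\P$ at $t = 0$ and $\Q$ at $t = 1$, so they coincide by uniqueness of geodesics in $\mathscr{P}$, as stated before \eqref{GeodesicInP}. Alternatively, there is a direct algebraic route. Writing $\M = \P^{-\frac{1}{2}}\Q^{\frac{1}{2}}$, one has $\P^{-\frac{1}{2}}\Q\P^{-\frac{1}{2}} = \M\M^{*}$ and $\Q^{-\frac{1}{2}}\P\Q^{-\frac{1}{2}} = (\M^{*}\M)^{-1}$. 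With a polar decomposition $\M = \U(\M^{*}\M)^{\frac{1}{2}}$, the identity reduces to Remark \ref{RemarkSquareRootInP}(3). Multiplying by $\J$ then transfers the identity to $\mathscr{P}_{\J}$.

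\textbf{Item (4).} The hypotheses give $\J\A \preceq \J\C$ and $\J\B \preceq \J\D$. By monotonicity of $\sharp_{t}$ in both arguments (Remark \ref{RemarkSharp}(4) states the $t = \frac{1}{2}$ case; the general case follows the Kubo--Ando theory), we get $(\J\A) \sharp_{t} (\J\B) \preceq (\J\C) \sharp_{t} (\J\D)$. By the displayed identity, this is exactly $\J(\A \sharp^{\J}_{t} \B) \preceq \J(\C \sharp^{\J}_{t} \D)$, i.e. the claim in the order $\preceq_{\J}$.

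If the weighted monotonicity must be proved here, I would first fix $\B$ and vary the first argument, then use item (3) to swap arguments. For fixed $\P$, monotonicity in the second argument follows from congruence invariance (Remark \ref{LoewnerOrderP}) together with operator monotonicity of $\X \mapsto \X^{t}$ for $t \in \left[0\,, 1\right]$.
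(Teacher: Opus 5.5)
Your proposal is correct and follows the paper's route. Everything is transported through $\J(\A \sharp^{\J}_{t} \B) = (\J\A)\sharp_{t}(\J\B)$, and (3) comes from time-reversal plus uniqueness of geodesics (run by the paper in $\mathscr{P}_{\J}$ and by you in $\mathscr{P}$, which is equivalent since $\Phi_{\J}$ is an isometry); your handling of (4) is more careful than the paper's, which cites Remark~\ref{RemarkSharp}(4) although that is stated only for $t=\frac{1}{2}$, and your argument via congruence, operator monotonicity of $\X\mapsto\X^{t}$, and the swap from (3) correctly fills this in.
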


\begin{proof}

Recall that the weighted $\J$-geometric mean is defined by 
\begin{equation*}
\A \sharp^{\J}_{t} \B := \J^{-1}\left(\left(\J\A\right) \sharp_{t} \left(\J\B\right)\right)\,, \qquad \left(\A\,, \B \in \mathscr{P}_{\J}\right)\,,
\end{equation*}
where $\sharp_{t}$ denotes the usual weighted geometric mean on $\mathscr{P}$. Since $\J^{-1} = \J$, this is equivalent to
\begin{equation}
\label{EquationPullBackProof}
\J\left(\A \sharp^{\J}_{t} \B\right) = \left(\J\A\right) \sharp_{t} \left(\J\B\right)\,.
\end{equation}
We prove each item by transporting the corresponding standard property from
$\mathscr{P}$ to $\mathscr{P}_{\J}$ via \eqref{EquationPullBackProof}\,.

\begin{enumerate}
\item Let $\A\in\mathscr{P}_{\J}$. Using Equation \eqref{EquationPullBackProof} and Remark \ref{RemarkSharp}, we get that
\begin{equation*}
\J\left(\A \sharp^{\J}_{t} \A\right) = \left(\J\A\right) \sharp_{t} \left(\J\A\right) = \J\A\,,
\end{equation*}
hence $\A \sharp^{\J}_{t} \A = \A$\,.

\noindent Now assume $\A \sharp^{\J}_{t} \B = \A$. Multiplying by $\J$ and using \eqref{EquationPullBackProof} gives
\begin{equation*}
\left(\J\A\right) \sharp_{t} \left(\J\B\right) = \J\A\,.
\end{equation*}
Again, it follows from Remark \ref{RemarkSharp} that $\J\A = \J\B$, i.e. $\A = \B$\,.
\item Let $\alpha\,, \beta > 0$ and $\A\,, \B \in \mathscr{P}_{\J}$. First note that
\begin{equation*}
\left(\alpha\A\right)^{\frac{1}{2}}_{\J} = \alpha^{\frac{1}{2}}\A^{\frac{1}{2}}_{\J}\,, \qquad \left(\alpha\A\right)^{-\frac{1}{2}}_{\J} =\alpha^{-\frac{1}{2}}\A^{-\frac{1}{2}}_{\J}\,.
\label{EquationPowerProof}
\end{equation*}
We get
\begin{eqnarray*}
\left(\alpha\A\right) \sharp^{\J}_{t} \left(\beta\B\right) & = &\left(\alpha\A\right)^{\frac{1}{2}}_{\J}\J\left(\left(\alpha\A\right)^{-\frac{1}{2}}_{\J}\J\left(\beta\B\right)\J\left(\alpha\A\right)^{-\frac{1}{2}}_{\J}\right)^{t}_{\J}\J\left(\alpha\A\right)^{\frac{1}{2}}_{\J} \\
& = & \alpha^{\frac{1}{2}}\A^{\frac{1}{2}}_{\J}\J\left(\alpha^{-\frac{1}{2}}\A^{-\frac{1}{2}}_{\J}\left(\beta\J\B\right)\alpha^{-\frac{1}{2}}\A^{-\frac{1}{2}}_{\J}\right)^{t}_{\J}\J\alpha^{\frac{1}{2}}\A^{\frac{1}{2}}_{\J} \\
& = & \alpha^{\frac{1}{2}}\A^{\frac{1}{2}}_{\J}\J\left(\tfrac{\beta}{\alpha}\left(\A^{-\frac{1}{2}}_{\J}\J\B\J\A^{-\frac{1}{2}}_{\J}\right)\right)^{t}_{\J}\J\alpha^{\frac{1}{2}}\A^{\frac{1}{2}}_{\J}\,.
\end{eqnarray*}
Using that $\left(\lambda \A\right)^{t}_{\J} = \lambda^{t}\A^{t}_{\J}$, we get
\begin{equation*}
\left(\tfrac{\beta}{\alpha}\left(\A^{-\frac{1}{2}}_{\J}\J\B\J\A^{-\frac{1}{2}}_{\J}\right)\right)^{t}_{\J} = \left(\tfrac{\beta}{\alpha}\right)^{t}\left(\A^{-\frac{1}{2}}_{\J}\J\B\J\A^{-\frac{1}{2}}_{\J}\right)^{t}_{\J}\,.
\end{equation*}
Therefore,
\begin{equation*}
\left(\alpha\A\right) \sharp^{\J}_{t} \left(\beta\B\right) = \alpha^{\frac{1}{2}}\left(\tfrac{\beta}{\alpha}\right)^{t}\alpha^{\frac{1}{2}}
\A^{\frac{1}{2}}_{\J}\J\left(\A^{-\frac{1}{2}}_{\J}\J\B\J\A^{-\frac{1}{2}}_{\J}\right)^{t}_{\J}\J\A^{\frac{1}{2}}_{\J} = \alpha^{1-t}\beta^{t}\left(\A \sharp^{\J}_{t} \B\right)\,,
\end{equation*}
which proves (2)\,.
\item Consider the curve
\begin{equation*}
\gamma(t) := \A \sharp^{\J}_{t} \B\,, \qquad \qquad \left(t \in \left[0\,, 1\right]\right)\,.
\end{equation*}
By construction, $\gamma(0) = \A$ and $\gamma(1) = \B$. Let $\widetilde{\gamma}(t) := \gamma(1-t)$. Then $\widetilde{\gamma}(0) = \B$ and $\widetilde{\gamma}(1) = \A$.
Moreover, $\widetilde\gamma$ satisfies the same geodesic equation as $\gamma$. By the uniqueness of the geodesic in $\mathscr{P}_{\J}$ joining two given endpoints, it follows that $\widetilde{\gamma}$ coincides with the geodesic $t \mapsto \B \sharp^{\J}_{t} \A$. Hence, for every $t \in \left[0\,, 1\right]$,
\begin{equation*}
\B \sharp^{\J}_{t} \A = \widetilde{\gamma}(t) = \gamma(1-t) = \A \sharp^{1-t}_{\J} \B\,,
\end{equation*}
which is equivalent (after replacing $t$ by $1-t$) to
\begin{equation*}
\A \sharp^{\J}_{t} \B = \B \sharp^{1-t}_{\J} \A\,.
\end{equation*}
This proves (3)\,.
\item Assume $\A \preceq_{\J} \C$ and $\B \preceq_{\J} \D$. By definition of $\preceq_{\J}$, this is equivalent to $\J\A \preceq \J\C$ and $\J\B\preceq \J\D$ in the usual Loewner order on $\mathscr{P}$. Using Remark \ref{RemarkSharp}, it follows that
\begin{equation*}
\left(\J\A\right) \sharp_{t} \left(\J\B\right) \preceq \left(\J\C\right) \sharp_{t} \left(\J\D\right)\,.
\end{equation*}
Therefore
\begin{equation*}
\J\left(\A \sharp^{\J}_{t} \B\right) \preceq \J\left(\C \sharp^{\J}_{t} \D\right)\,,
\end{equation*}
which is exactly $\A \sharp^{\J}_{t} \B \preceq_{\J} \C \sharp^{\J}_{t} \D$\,.
\end{enumerate}

\end{proof}

\begin{proposition}

\begin{enumerate}
\item For all $g \in \K_{\J}$, $t\in \left[0\,, 1\right]$ and $\A\,, \B \in \mathscr{P}_{\J}$, we have
\begin{equation*}
\left(g\A g^{\sharp}\right) \sharp^{\J}_{t} \left(g\B g^{\sharp}\right) = g\left(\A \sharp^{\J}_{t} \B\right)g^{\sharp}\,.
\end{equation*}
\item For all $\A\,, \B\,, \C\,, \D \in \mathscr{P}_{\J}$ and $t\,, s \in \left[0\,, 1\right]$, we have
\begin{equation*}
\left(1-s\right)\A \sharp^{\J}_{t} \C + s \B \sharp^{\J}_{t} \D \preceq_{\J} \left(\left(1-s\right)\A + s \B\right) \sharp^{\J}_{t} \left(\left(1-s\right)\C + s \D\right)\,.
\end{equation*}
\item For all $\A\,, \B \in \mathscr{P}_{\J}$ and $t\,, s\,, u \in \left[0\,, 1\right]$,
\begin{equation*}
\left(\A \sharp^{\J}_{t} \B\right) \sharp_{u}^{\J} \left(\A \sharp_{s}^{\J} \B\right) = \A \sharp_{(1-u)t+us}^{\J} \B\,.
\end{equation*}
\item For all $\A\,, \B \in \mathscr{P}_{\J}$ and $t \in \left[0\,, 1\right]$, we get
\begin{equation*}
[(1-t)\A_{\J}^{-1}+t\B_{\J}^{-1}]^{-1}\preceq_{\J} \A \sharp_{t}^{\J} \B \preceq_{\J} (1-t) \A + t \B\,.
\end{equation*}

\end{enumerate}

\end{proposition}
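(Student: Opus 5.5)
The plan is to prove all four items the same way as Proposition \ref{PropertiesGeometric}: transport each one through the identity \eqref{EquationPullBackProof},
\begin{equation*}
\J\left(\A \sharp^{\J}_{t} \B\right) = \left(\J\A\right) \sharp_{t} \left(\J\B\right)\,,
\end{equation*}
which comes from Remark \ref{RemarkGeometricMeanJ}. Throughout, write $\P = \J\A$, $\Q = \J\B$, $\R = \J\C$, $\S = \J\D \in \mathscr{P}$, and recall that $\X \preceq_{\J} \Y$ means exactly $\J\X \preceq \J\Y$. Each item then reduces to the corresponding classical statement on $\mathscr{P}$. In the quaternionic case we rely on the analogues collected in Appendix \ref{AppendixA}.

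For (1), I would first show that $g \in \K_{\J}$ commutes with $\J$. From $g^{*}g = \Id_{n}$ and $g^{*}\J g = \J$ we get $\J g = (g^{*})^{-1}\J = g\J$. Hence $g^{\sharp} = \J g^{*}\J = g^{*}$ and $\J g\A g^{\sharp} = g\P g^{*}$. The claim then reduces to $(g\P g^{*}) \sharp_{t} (g\Q g^{*}) = g(\P \sharp_{t} \Q)g^{*}$. Remark \ref{RemarkSharp} states this congruence invariance only for $t = \frac{1}{2}$. For general $t$ I would argue as follows. The map $\zeta_{g}$ is an isometry of $\mathscr{P}$, so by Lemma \ref{LemmaRiemannianManifolds} it sends the geodesic $t \mapsto \P \sharp_{t} \Q$ to the geodesic joining $g\P g^{*}$ and $g\Q g^{*}$. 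By uniqueness of geodesics, that image is $t \mapsto (g\P g^{*}) \sharp_{t} (g\Q g^{*})$.

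For (2), I read $(1-s)\A \sharp^{\J}_{t} \C$ as $(1-s)(\A \sharp^{\J}_{t} \C)$. Since $\Phi_{\J}$ is linear, multiplying both sides by $\J$ turns the inequality into the classical joint concavity of $\sharp_{t}$:
\begin{equation*}
(1-s)\,\P \sharp_{t} \R + s\, \Q \sharp_{t} \S \preceq ((1-s)\P + s\Q) \sharp_{t} ((1-s)\R + s\S)\,.
\end{equation*}
This is due to Kubo--Ando \cite{KuboAndo1980}.

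For (3), write $\P \sharp_{t} \Q = \P^{\frac{1}{2}}\exp(t\L)\P^{\frac{1}{2}}$ with $\L = \log(\P^{-\frac{1}{2}}\Q\P^{-\frac{1}{2}})$. Set $\M_{t} = \P \sharp_{t} \Q$ and $\M_{s} = \P \sharp_{s} \Q$. Direct computation gives $\M_{t}^{-\frac{1}{2}}\M_{s}\M_{t}^{-\frac{1}{2}} = \exp((s-t)\L)$ up to a unitary conjugation. Rather than track that unitary, I would use uniqueness of geodesics: $u \mapsto \P \sharp_{(1-u)t+us} \Q$ is an affine reparametrization of a geodesic, so it is itself a geodesic, and it joins $\M_{t}$ to $\M_{s}$. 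Therefore it equals $u \mapsto \M_{t} \sharp_{u} \M_{s}$. Applying $\J$ then gives the $\J$-version.

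For (4), the arithmetic side is immediate by linearity: $\J((1-t)\A + t\B) = (1-t)\P + t\Q$. The harmonic side is where I expect the main obstacle, because the notation must be read correctly. With the ordinary inverse, $\A^{-1} = \P^{-1}\J$, so
\begin{equation*}
(1-t)\A^{-1} + t\B^{-1} = \left((1-t)\P^{-1} + t\Q^{-1}\right)\J\,,
\end{equation*}
whose inverse multiplied by $\J$ is exactly $((1-t)\P^{-1} + t\Q^{-1})^{-1}$. The claim then follows from the classical harmonic--geometric--arithmetic inequality $((1-t)\P^{-1} + t\Q^{-1})^{-1} \preceq \P \sharp_{t} \Q \preceq (1-t)\P + t\Q$. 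If instead $\A^{-1}_{\J}$ is read as the $\J$-power $\J\P^{-1}$, the left side becomes $\J$ times a harmonic mean of $\J\P\J$ and $\J\Q\J$, which need not compare. So I would adopt the ordinary-inverse reading, which agrees with Lemma \ref{LemmaInverse}.
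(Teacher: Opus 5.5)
Your proof is correct. For (2)--(4) it is the paper's argument: transport through $\J(\A \sharp^{\J}_{t} \B) = (\J\A)\sharp_{t}(\J\B)$ and invoke the classical fact on $\mathscr{P}$. For (3) the paper simply cites Kubo--Ando, whereas you supply the geodesic-reparametrization argument yourself.

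The genuine difference is in (1). The paper stays inside the $\J$-calculus. It substitutes $(g\X g^{\sharp})^{t}_{\J} = g\X^{t}_{\J}g^{\sharp}$ from Proposition~\ref{PropositionPropertiesSRJ}(3) into the definition and cancels the inner factors. Its display writes the $\bullet$-products as ordinary products; with the $\J$'s restored, the cancellation is $g^{\sharp}\J g = \J g^{*}g = \J$. You instead use $g\J = \J g$, hence $g^{\sharp} = g^{*}$, to reduce to classical congruence invariance. For unitary $g$ that invariance already follows from Remark~\ref{RemarkSquareRootInP}(3), with no appeal to geodesic uniqueness. The isometry argument you chose actually buys more, since it gives classical invariance for every $h \in \GL(n)$. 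Because $\J(g\Y g^{\sharp}) = (\J g\J)(\J\Y)(\J g\J)^{*}$ for every matrix $\Y$, the identity in (1) then holds for all $g \in \GL(n)$. The restriction to $\K_{\J}$ is needed only for the $\J$-power identity the paper relies on.

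Your care in (4) is justified. Note, though, that reading both inverses in the $\J$-sense $\X \mapsto \J\X^{-1}\J$ yields the same matrix $\J\bigl((1-t)\P^{-1}+t\Q^{-1}\bigr)^{-1}$ as your ordinary-inverse reading; this is what $\A^{-1}_{\J}$ means in the paper. Only the mixed reading fails, already at $t=0$.
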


\begin{proof}

\begin{enumerate}
\item Let $g \in \K_{\J}$ and let $\widetilde{\A} = g\A g^{\sharp}$ and $\widetilde{\B} = g\B g^{\sharp}$.Using Proposition \ref{PropositionPropertiesSRJ}, for every $\X \in \mathscr{P}_{\J}$ and $t \in \mathbb{R}$, we have $(g\X g^{\sharp})^{t}_{\J} = g\X^{t}_{\J}g^{\sharp}$. In particular,
\begin{equation*}
\widetilde{\A}^{\pm\frac{1}{2}}_{\J} = g\A^{\pm\frac{1}{2}}_{\J}g^{\sharp}\,.
\end{equation*}
By definition of $\sharp^{\J}_{t}$ (see \ref{DefinitionJGeometricMean}), we have
\begin{equation*}
\widetilde{\A} \sharp^{\J}_{t} \widetilde{\B} =
\widetilde{\A}^{\frac{1}{2}}_{\J} \left(
\widetilde{\A}^{-\frac{1}{2}}_{\J}\widetilde{\B}
\widetilde{\A}^{-\frac{1}{2}}_{\J}\right)^{t}_{\J} \widetilde{\A}^{\frac{1}{2}}_{\J}\,.
\end{equation*}
Using that $g^{\sharp}g = \Id_{n}$, we obtain
\begin{equation*}
\widetilde{\A}^{-\frac{1}{2}}_{\J}\widetilde{\B}
\widetilde{\A}^{-\frac{1}{2}}_{\J} = \left(g\A^{-\frac{1}{2}}_{\J}g^{\sharp}\right)\left(g\B g^{\sharp}\right)
\left(g\A^{-\frac{1}{2}}_{\J}g^{\sharp}\right) = g\left(\A^{-\frac{1}{2}}_{\J}\B\A^{-\frac{1}{2}}_{\J}\right)g^{\sharp}\,.
\end{equation*}
Finally, using that
\begin{equation*}
\left( \widetilde{\A}^{-\frac{1}{2}}_{\J}\widetilde{\B}
\widetilde{\A}^{-\frac{1}{2}}_{\J}\right)^{t}_{\J} =
g\left(\A^{-\frac{1}{2}}_{\J}\B\A^{-\frac{1}{2}}_{\J}\right)^{t}_{\J}g^{\sharp}\,,
\end{equation*}
it follows that
\begin{eqnarray*}
\widetilde{\A}\sharp^{\J}_{t}\widetilde{\B} & = & \left(g\A^{\frac{1}{2}}_{\J}g^{\sharp}\right) \left(
g(\A^{-\frac{1}{2}}_{\J}\B\A^{-\frac{1}{2}}_{\J})^{t}_{\J}g^{\sharp}
\right) \left(g\A^{\frac{1}{2}}_{\J}g^{\sharp}\right) \\
& = & g\A^{\frac{1}{2}}_{\J}\left(\A^{-\frac{1}{2}}_{\J}\B\A^{-\frac{1}{2}}_{\J}\right)^{t}_{\J} \A^{\frac{1}{2}}_{\J}g^{\sharp} \\
& = & g\left(\A \sharp^{\J}_{t} \B\right)g^{\sharp}\,.
\end{eqnarray*}
\item Using Remark \ref{RemarkGeometricMeanJ}, we get
\begin{equation*}
\J\left(\A \sharp^{\J}_{t} \C\right) = \left(\J\A\right) \sharp_{t} \left(\J\C\right)\,, \qquad \J\left(\B \sharp^{\J}_{t} \D\right)=\left(\J\B\right) \sharp_{t} \left(\J\D\right)\,.
\end{equation*}
Hence,
\begin{equation*}
\J\left(\left(1-s\right)\A \sharp^{\J}_{t} \C + s\B \sharp^{\J}_{t} \D\right) = (1-s)\J\A \sharp_{t} \J\C + s\J\B \sharp_{t} \J\D\,.
\end{equation*}
Similarly, we get
\begin{equation*}
\J\left(\left(\left(1-s\right)\A + s\B\right) \sharp^{\J}_{t} \left(\left(1-s\right)\C + s\D\right)\right) = \left(\left(1-s\right)\J\A + s\J\B\right) \sharp_{t} \left(\left(1-s\right)\J\C + s\J\D\right)\,.
\end{equation*}

\noindent Using the concavity of the geometric mean on $\mathscr{P}$ (see \cite{KuboAndo1980}), we have
\begin{equation*}
(1-s)\J\A \sharp_{t} \J\C + s\J\B \sharp_{t} \J\D
\preceq \left(\left(1-s\right)\J\A + s\J\B\right) \sharp_{t} \left(\left(1-s\right)\J\C\ +s\J\D\right)\,.
\end{equation*}
Combining the previous identities, we obtain
\begin{equation*}
\J\left(\left(1-s\right)\A \sharp^{\J}_{t} \C + s\B \sharp^{\J}_{t} \D\right) \preceq 
\J\left(\left(\left(1-s\right)\A + s\B\right) \sharp^{\J}_{t} \left(\left(1-s\right)\C + s\D\right)\right)\,,
\end{equation*}
i.e.
\begin{equation*}
(1-s)\A \sharp^{\J}_{t} \C + s\B \sharp^{\J}_{t} \D \preceq_{\J} 
\left(\left(1-s\right)\A + s\B\right) \sharp^{\J}_{t} \left(\left(1-s\right)\C + s\D\right)\,.
\end{equation*}
\item Follows from \cite{KuboAndo1980} using similar arguments as in $(2)$\,.
\item This follows from the corresponding inequality for positive definite matrices.
\end{enumerate}
\end{proof}

\begin{remark}

A remark is in order. Considering that for positive definite matrices $\X\,, \Y \in \mathscr{P}$ that commute, we have 
\begin{equation}
\X \sharp_{t} \Y = \X^{1-t} \Y^{t}\,, \qquad \qquad \left(t \in \left[0\,, 1\right]\right)\,.
\label{LastEquation}
\end{equation}
it is natural to ask whether two $\J$-positive hermitian matrices that commute satisfy such an equation. That is, if $\A\,, \B \in \mathscr{P}_{\J}$ such that $\A\B = \B\A$, is it true that
$\A \sharp^{\J}_{t} \B = \A^{1-t}_{\J}\B^{t}_{\J}$?

\noindent Let $\J = \diag(1\,, -1)$ (i.e. $p = q = 1$). One  easy to see that the matrices
\begin{equation*}
\A = \begin{bmatrix} 2 & 1 \\ -1 & -2 \end{bmatrix} \qquad \text{ and } \qquad \B = \begin{bmatrix} 3 & 1 \\ -1 & -1 \end{bmatrix}
\end{equation*}
are both $\J$-positive matrices in $\mathscr{P}_{\J}$. Moreover, $\A$ and $\B$ commute in $\Mat(2 \times 2)$. However, we get
\begin{equation*}
\A \sharp^{\J} \B - \A^{\frac{1}{2}}_{\J}\B^{\frac{1}{2}}_{\J} \approx \begin{bmatrix} 0.263207 & 0.768429 \\ -0.857469 & -2.50336 \end{bmatrix}\,.
\end{equation*}
This is one key property in which the geometric mean of $\J$-positive matrices differs from the geometric mean of positive definite matrices. 

\label{LastRemark}

\end{remark}

\noindent The right analogue of Equation \eqref{LastEquation} on $\mathscr{P}_{\J}$ is given in the following lemma and is consistent with the algebraic structure provided by $\bullet$\,.

\begin{lemma}

For all $\A\,, \B \in \mathscr{P}_{\J}$ such that $\A\bullet\B = \B\bullet\A$, we have that
\begin{equation*}
\A \sharp^{\J}_{t} \B = \A^{1-t}_{\J}\bullet\B^{t}_{\J}\,.
\end{equation*}

\end{lemma}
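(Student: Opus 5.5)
The plan is to transport everything to the classical cone $\mathscr{P}$ via $\Phi_{\J}$ and use the classical identity \eqref{LastEquation} for commuting positive definite matrices. Set $\P := \J\A$ and $\Q := \J\B$, both in $\mathscr{P}$. The first step is to translate the hypothesis. Since $\A \bullet \B = \A\J\B$ and $\B\bullet\A = \B\J\A$, multiplying $\A\J\B = \B\J\A$ on the left by $\J$ gives $(\J\A)(\J\B) = (\J\B)(\J\A)$. Hence $\A\bullet\B = \B\bullet\A$ is equivalent to $\P\Q = \Q\P$; this is exactly the observation already used in the proof of Proposition \ref{PropositionPropertiesSRJ} (4).

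Next, by Remark \ref{RemarkGeometricMeanJ} (equivalently Equation \eqref{EquationPullBackProof}), we have $\J\left(\A \sharp^{\J}_{t} \B\right) = \P \sharp_{t} \Q$. Since $\P$ and $\Q$ commute in $\mathscr{P}$, Equation \eqref{LastEquation} gives $\P \sharp_{t} \Q = \P^{1-t}\Q^{t}$. If one wants this justified rather than quoted: $\P^{-\frac{1}{2}}$ commutes with $\Q$, since powers of commuting positive matrices commute (functional calculus / simultaneous diagonalization, or the quaternionic analogue from Appendix \ref{AppendixA}). Therefore $\P^{-\frac{1}{2}}\Q\P^{-\frac{1}{2}} = \P^{-1}\Q$, and by Remark \ref{RemarkSquareRootInP} (2) its $t$-th power is $\P^{-t}\Q^{t}$. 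Conjugating by $\P^{\frac{1}{2}}$ then yields $\P^{1-t}\Q^{t}$.

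Finally, I would compute the right-hand side using Proposition \ref{PropositionSquareRoot} (1):
\begin{equation*}
\A^{1-t}_{\J}\bullet\B^{t}_{\J} = \J\P^{1-t}\J\J\Q^{t} = \J\P^{1-t}\Q^{t}.
\end{equation*}
Multiplying the identity $\J(\A \sharp^{\J}_{t} \B) = \P^{1-t}\Q^{t}$ by $\J$ on the left and using $\J^{2} = \Id_{n}$ then gives $\A \sharp^{\J}_{t} \B = \A^{1-t}_{\J}\bullet\B^{t}_{\J}$.

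The only step needing care is the commutation of real powers $\P^{s}$ and $\Q^{r}$ for commuting $\P, \Q \in \mathscr{P}$, particularly in the quaternionic case. I would handle it by noting that $\P^{s} = \exp(s\log\P)$ and that $\log\P$ is a polynomial in $\P$ (interpolating on the finite real spectrum), so it commutes with anything commuting with $\P$. This argument also works over $\mathbb{H}$, since the polynomial has real coefficients.
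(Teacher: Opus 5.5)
Your proposal is correct and follows the paper's proof essentially step for step. You set $\P = \J\A$ and $\Q = \J\B$, turn $\bullet$-commutation into $\P\Q = \Q\P$, apply $\P \sharp_{t} \Q = \P^{1-t}\Q^{t}$ through the pullback identity $\J(\A \sharp^{\J}_{t} \B) = \P \sharp_{t} \Q$, and insert $\J^{2} = \Id_{n}$ to recognize $\A^{1-t}_{\J}\bullet\B^{t}_{\J}$. The one addition is your justification that real powers of commuting positive matrices commute (via $\log\P$ being a real polynomial in $\P$), which the paper simply quotes and which usefully covers the quaternionic case.
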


\begin{proof}

Let $\A\,,\B \in \mathscr{P}_{\J}$ such that that $\A\J\B = \B\J\A$. Let $\P := \J\A$ and $\Q := \J\B \in \mathscr{P}$. Since $\J^{2} = \Id_{n}$, the commutation relation implies
\begin{equation*}
\P\Q = \left(\J\A\right)\left(\J\B\right) = \J\left(\A\J\B\right) = \J\left(\B\J\A\right) = \left(\J\B\right)\left(\J\A\right) = \Q\P\,.
\end{equation*}
Hence $\P$ and $\Q$ commute. Therefore, it follows from Remark \ref{LastRemark} that for all $t \in \left[0\,, 1\right]$
\begin{equation*}
\P \sharp_{t} \Q = \P^{1-t} \Q^{t}\,.
\end{equation*}
Hence
\begin{equation*}
\A \sharp^{\J}_{t} \B = \J^{-1} \P^{1-t} \Q^{t} = \J^{-1} \left(\J\A\right)^{1-t} \left(\J\B\right)^{t}\,.
\end{equation*}
Inserting $\J^{2} = \Id_{n}$ between the two factors and using that $\X^{t}_{\J} := \J\left(\J\X\right)^{t}$, we obtain
\begin{equation*}
\J^{-1} \left(\J\A\right)^{1-t} \left(\J\B\right)^{t} = \left(\J\left(\J\A\right)^{1-t}\right)\J\left(\J\left(\J\B\right)^{t}\right) = \A^{1-t}_{\J}\J\B^{t}_{\J}\,,
\end{equation*}
and the lemma follows\,.

\end{proof}

\noindent Another important property satisfied by the weighted geometric means is the celebrated Ando-Hiai inequality \cite{AndoHiai1994}, that states that for $\A, \B \in \mathscr{P}$
\begin{equation}
\label{Ando-Hiai}
\A \sharp_{t} \B \preceq \Id_{n} \implies \A^{r} \sharp_{t} \B^{r} \preceq \Id_{n}\,, \qquad \left(r \geq 1\right)\,. 
\end{equation}

\begin{proposition}[$\J$-Ando-Hiai Inequality] 
\label{JAndoHiai}
For all $\A\,, \B \in \mathscr{P}_{\J}$ and $r\geq 1$, we have that
\begin{equation*}
\A \sharp^{\J}_{t} \B \preceq_{\J} \J \implies \A^{r}_{\J} \sharp^{\J}_{t} \B^{r}_{\J} \preceq_{\J} \J \,.
\end{equation*}

\end{proposition}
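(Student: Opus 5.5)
The plan is to transport the classical Ando--Hiai inequality \eqref{Ando-Hiai} through the identification $\Phi_{\J}$, exactly as was done for the monotonicity and concavity properties above. I will use three dictionary facts already established in the paper.

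The first is Remark \ref{RemarkGeometricMeanJ}, in the form \eqref{EquationPullBackProof}:
\begin{equation*}
\J\left(\A \sharp^{\J}_{t} \B\right) = \left(\J\A\right) \sharp_{t} \left(\J\B\right)\,, \qquad \left(\A\,, \B \in \mathscr{P}_{\J}\right)\,.
\end{equation*}
The second is Proposition \ref{PropositionSquareRoot}(1), which gives $\X^{r}_{\J} = \J(\J\X)^{r}$. Hence $\J\X^{r}_{\J} = (\J\X)^{r}$ for every real $r$, in particular for $r \geq 1$. The third is the definition of $\preceq_{\J}$: $\X \preceq_{\J} \Y$ if and only if $\J\X \preceq \J\Y$.

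One small point must be checked first. The statement writes $\X \preceq_{\J} \J$, while $\preceq_{\J}$ was defined on $\mathscr{P}_{\J}$. Since $\J\cdot\J = \Id_{n} \in \mathscr{P}$, we have $\J \in \mathscr{P}_{\J}$ (indeed $\J = \X^{0}_{\J}$), so the comparison makes sense. Under the dictionary it reads $\J\X \preceq \Id_{n}$.

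With these in hand, the argument is short. Set $\P := \J\A$ and $\Q := \J\B$; both lie in $\mathscr{P}$. The hypothesis $\A \sharp^{\J}_{t} \B \preceq_{\J} \J$ means $\J(\A \sharp^{\J}_{t} \B) \preceq \J\J = \Id_{n}$, that is, $\P \sharp_{t} \Q \preceq \Id_{n}$. Applying the classical Ando--Hiai inequality \eqref{Ando-Hiai} gives $\P^{r} \sharp_{t} \Q^{r} \preceq \Id_{n}$ for all $r \geq 1$.

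To return to $\mathscr{P}_{\J}$, note that $\A^{r}_{\J}\,, \B^{r}_{\J} \in \mathscr{P}_{\J}$. Applying \eqref{EquationPullBackProof} to them and then the second dictionary fact yields
\begin{equation*}
\J\left(\A^{r}_{\J} \sharp^{\J}_{t} \B^{r}_{\J}\right) = \left(\J\A^{r}_{\J}\right) \sharp_{t} \left(\J\B^{r}_{\J}\right) = \P^{r} \sharp_{t} \Q^{r} \preceq \Id_{n} = \J\J\,.
\end{equation*}
By definition of $\preceq_{\J}$, this says $\A^{r}_{\J} \sharp^{\J}_{t} \B^{r}_{\J} \preceq_{\J} \J$, which is the claim.

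There is no genuine obstacle, since everything reduces to bookkeeping with $\J^{2} = \Id_{n}$. The only point needing care is the quaternionic case, where \eqref{Ando-Hiai} is not covered by \cite{AndoHiai1994}. I would handle this through the standard embedding $\Mat(n\times n,\mathbb{H}) \hookrightarrow \Mat(2n \times 2n,\mathbb{C})$ of Appendix \ref{AppendixA}. That embedding is a real-algebra homomorphism which intertwines $*$, preserves positivity and the Loewner order, and commutes with real powers and hence with $\sharp_{t}$. Under it the quaternionic Ando--Hiai inequality follows from the complex one. I would state this reduction explicitly and then run the argument above verbatim for all three choices of $\mathbb{D}$.
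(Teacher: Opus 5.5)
Your proof is correct and follows the paper's argument: you pass to $\mathscr{P}$ via $\J(\A\sharp^{\J}_{t}\B) = (\J\A)\sharp_{t}(\J\B)$, apply the classical Ando--Hiai inequality, and return via $\J\X^{r}_{\J} = (\J\X)^{r}$. Your extra checks cover points the paper leaves implicit: that $\J \in \mathscr{P}_{\J}$, and that the quaternionic case reduces to the complex one through $\Psi$, which requires $\Psi$ to both preserve and reflect the Loewner order.
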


\begin{proof}

Notice that $\A \sharp_t^{\J} \B \preceq_{\J} \J$ if and only if $\J\A \sharp_t^{\J} \B \preceq \Id_{n}$. However,
\begin{equation*}
\J\A \sharp_t^{\J} \B= (\J\A)\sharp_t (\J \B)\preceq \Id_{n}\,,
\end{equation*}
implies
\begin{equation*}
\left(\J\A\right)^{r} \sharp_{t} \left(\J\B\right)^{r}\preceq \Id_{n}\,,
\end{equation*}
by \eqref{Ando-Hiai}. Therefore, $\left[\J(\A)^{r}_{\J}\right] \sharp_{t} \left[\J(\B)^{r}_{\J}\right] \preceq \Id_{n}$ or equivalently $\J(\A_{\J}^r\sharp_t \B_{\J}^r) \preceq \Id_{n}$. This yields the desired result in the $\J$-order\,.

\end{proof}

\noindent We conclude this article with a consequence of the $\J$-Ando-Hiai inequality, that is the $\J$-Furuta inequality. The proof is an adaptation to this setting of one of the proofs in \cite{FujiiKamei2006}.

\begin{proposition}[$\J$-Furuta Inequality]
For all $\A\,, \B \in \mathscr{P}_{\J}$, $p \geq 0$ and $r \geq 1$, we have that $0\prec_{\J}\B\preceq_{\J} \A$ implies
\begin{equation*}
\left(\A^{\frac{r}{2}}_{\J} \bullet \B^{p}_{\J} \bullet \A^{\frac{r}{2}}_{\J}\right)_{\J}^{\frac{r}{r+p}} \preceq_{\J} \A^{r}_{\J}\,.
\end{equation*}

\end{proposition}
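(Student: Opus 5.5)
The plan is to transport everything to the classical cone $\mathscr{P}$ through $\Phi_{\J}$, as in the proofs of Proposition \ref{JAndoHiai} and Proposition \ref{PropertiesGeometric}, and then invoke the classical Furuta inequality. The paper points to the proof of \cite{FujiiKamei2006}, which derives it from Ando--Hiai. Set $\P := \J\A$ and $\Q := \J\B$, both in $\mathscr{P}$. The hypothesis $\B \preceq_{\J} \A$ means exactly $\Q \preceq \P$. The hypothesis $0 \prec_{\J} \B$ only restates $\B \in \mathscr{P}_{\J}$.

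The first step is to rewrite the left-hand side. By Proposition \ref{PropositionSquareRoot}(1) we have $\A^{\frac{r}{2}}_{\J} = \J\P^{\frac{r}{2}}$ and $\B^{p}_{\J} = \J\Q^{p}$. Hence
\begin{equation*}
\A^{\frac{r}{2}}_{\J} \bullet \B^{p}_{\J} \bullet \A^{\frac{r}{2}}_{\J} = \J\P^{\frac{r}{2}}\J^{2}\Q^{p}\J^{2}\P^{\frac{r}{2}}\cdot\J^{0} = \J\left(\P^{\frac{r}{2}}\Q^{p}\P^{\frac{r}{2}}\right).
\end{equation*}
The inner matrix $\P^{\frac{r}{2}}\Q^{p}\P^{\frac{r}{2}}$ lies in $\mathscr{P}$, so the whole expression lies in $\mathscr{P}_{\J}$. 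Applying Proposition \ref{PropositionSquareRoot}(1) again gives
\begin{equation*}
\left(\A^{\frac{r}{2}}_{\J} \bullet \B^{p}_{\J} \bullet \A^{\frac{r}{2}}_{\J}\right)^{\frac{r}{r+p}}_{\J} = \J\left(\P^{\frac{r}{2}}\Q^{p}\P^{\frac{r}{2}}\right)^{\frac{r}{r+p}}.
\end{equation*}
On the other side, $\A^{r}_{\J} = \J\P^{r}$. So, by the definition of $\preceq_{\J}$, the claim reduces to the classical inequality $\left(\P^{\frac{r}{2}}\Q^{p}\P^{\frac{r}{2}}\right)^{\frac{r}{r+p}} \preceq \P^{r}$ whenever $\Q \preceq \P$ in $\mathscr{P}$.

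The second step is to prove this classical inequality, following the Fujii--Kamei route through Ando--Hiai. Set $\X := \P^{-\frac{r}{2}}\Q^{p}\P^{-\frac{r}{2}}$, conjugated appropriately, and write the target as a geometric mean: the inequality is equivalent to
\begin{equation*}
\P^{-r}\,\sharp_{\frac{r}{r+p}}\,\Q^{p} \preceq \Id_{n}.
\end{equation*}
To see this, note that $\P^{-r}\sharp_{s}\Q^{p} = \P^{-\frac{r}{2}}\left(\P^{\frac{r}{2}}\Q^{p}\P^{\frac{r}{2}}\right)^{s}\P^{-\frac{r}{2}}$, and that congruence by $\P^{\frac{r}{2}}$ preserves $\preceq$. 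For the base case $r=1$ with $p \ge 1$, we use $\Q \preceq \P$, which gives $\P^{-1} \preceq \Q^{-1}$ by Lemma \ref{LemmaInverse} in its classical form. Monotonicity of the mean then gives $\P^{-1}\sharp_{\frac{1}{1+p}}\Q^{p} \preceq \Q^{-1}\sharp_{\frac{1}{1+p}}\Q^{p} = \Q^{-\frac{p}{1+p}+\frac{p}{1+p}} = \Id_{n}$. The general exponents then come from the Ando--Hiai inequality \eqref{Ando-Hiai}, applied to suitable powers together with Löwner--Heinz to move between exponents.

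The case $0 \le p < 1$ is immediate: then $\frac{r}{r+p}\ge\ldots$ reduces via $\Q^{p}\preceq\P^{p}$, by Löwner--Heinz, to $\left(\P^{r+p}\right)^{\frac{r}{r+p}} = \P^{r}$.

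The main obstacle is making this exponent bookkeeping match the stated ranges $p\ge 0$, $r\ge 1$ exactly. If the Fujii--Kamei reduction does not align cleanly, I would fall back on simply citing the classical Furuta inequality in the form $(\P^{\frac{r}{2}}\Q^{p}\P^{\frac{r}{2}})^{\frac{r}{p+r}}\preceq \P^{r}$, which holds for $p,r\ge 0$. The $\J$-transport in the first step is then routine, and the statement follows by multiplying by $\J$.
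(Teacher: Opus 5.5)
Your proposal is correct and is essentially the paper's proof, namely the Fujii--Kamei derivation of Furuta from Ando--Hiai; the only difference is that you transport the whole statement to $\mathscr{P}$ through $\Phi_{\J}$ at the outset, whereas the paper runs the same steps inside $\mathscr{P}_{\J}$ using Lemma~\ref{LemmaInverse}, the monotonicity in Proposition~\ref{PropertiesGeometric}(4), the $\J$-Ando--Hiai inequality of Proposition~\ref{JAndoHiai}, and $\bullet$-congruence by $\A^{\frac{r}{2}}_{\J}$. The exponent bookkeeping you flagged is settled exactly as in the paper: your base-case computation $\P^{-1}\sharp_{\frac{1}{1+q}}\Q^{q}\preceq\Q^{-1}\sharp_{\frac{1}{1+q}}\Q^{q}=\Id_{n}$ holds for every $q\geq 0$ (not only $q\geq 1$), so taking $q=\frac{p}{r}$ gives $\P^{-1}\sharp_{\frac{r}{r+p}}\Q^{\frac{p}{r}}\preceq\Id_{n}$, and Ando--Hiai with exponent $r\geq 1$ then yields $\P^{-r}\sharp_{\frac{r}{r+p}}\Q^{p}\preceq\Id_{n}$ directly, so you need neither the Loewner--Heinz case split nor the fallback citation of Furuta.
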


\begin{proof}

Notice that $\B \preceq_{\J} \A$ implies $\A_{\J}^{-1} \preceq \B_{\J}^{-1}$, by Lemma \ref{LemmaInverse}. By part (4) of Proposition \ref{PropertiesGeometric}, we have that 
\begin{equation*}
\A_{\J}^{-1}\sharp^{\J}_{t} \B_{\J}^{q} \preceq_{\J} \B_{\J}^{t(1+q)-1} \,,
\end{equation*}
whenever $q \geq 0$ and $t\in \left[0\,, 1\right]$. In particular, pick $t = \frac{r}{p+r}$ and $q = \frac{p}{r}$. Then
\begin{equation*}
\A_{\J}^{-1}\sharp^{\J}_{\frac{r}{p+r}} \B_{\J}^{\frac{p}{r}} \preceq_{\J} \J \,.
\end{equation*}
Here, we used the fact that $\B^{0}_{\J} = \J$. Therefore, we can use Proposition \ref{JAndoHiai} and part (2) of Proposition \ref{PropositionSquareRoot} to obtain
\begin{equation*}
\A^{-r}_{\J}\sharp^{\J}_{\frac{r}{p+r}} \B^{p}_{\J} \preceq_{\J} \J \,.
\end{equation*}
At this point, we recall that
\begin{equation*}
\A^{-r}_{\J} \sharp^{\J}_{\frac{r}{p+r}} \B^{p}_{\J} = \A^{-\frac{r}{2}}_{\J} \bullet \left(\A^{\frac{r}{2}}_{\J} \bullet \B^{p}_{\J} \bullet \A^{\frac{r}{2}}_{\J}\right)^{\frac{r}{p+r}}_{\J}\bullet\A^{-\frac{r}{2}}_{\J}\,.
\end{equation*}
Thus, 
\begin{equation*}
\A^{-\frac{r}{2}}_{\J}\bullet\left(\A^{\frac{r}{2}}_{\J}\bullet\B_{\J}^{p}\bullet\A^{\frac{r}{2}}_{\J}\right)^{\frac{r}{p+r}}_{\J}\bullet\A^{-\frac{r}{2}}_{\J} \preceq_{\J} J\,.
\end{equation*}
Since $\A^{\frac{r}{2}}_{\J}$ is $\J$-Hermitian,
\begin{equation*}
\A^{\frac{r}{2}}_{\J}\A^{-\frac{r}{2}}_{\J}\bullet\left(\A^{\frac{r}{2}}_{\J}\bullet\B_{\J}^{p}\bullet\A^{\frac{r}{2}}_{\J}\right)^{\frac{r}{p+r}}_{\J}\bullet\A^{-\frac{r}{2}}_{\J}\A^{\frac{r}{2}}_{\J}\preceq_{\J} \A^{\frac{r}{2}}_{\J}\bullet\A^{\frac{r}{2}}_{\J}\,,
\end{equation*}
which implies 
\begin{equation*}
\left(\A^{\frac{r}{2}}_{\J}\bullet\B_{\J}^{p}\bullet\A^{\frac{r}{2}}_{\J}\right)^{\frac{r}{p+r}}_{\J}=\J\bullet\left(\A^{\frac{r}{2}}_{\J}\bullet\B_{\J}^{p}\bullet\A^{\frac{r}{2}}_{\J}\right)^{\frac{r}{p+r}}_{\J}\bullet\J\preceq_{\J} \A^{r}_{\J}\,,
\end{equation*}
as required.

\end{proof}

\appendix

\section{The quaternionic case}

\label{AppendixA}

\subsection{The division algebra $\mathbb{H}$}

\label{AppendixA1}

We denote by $\mathbb{H}$  the real associative algebra
\begin{equation*}
\mathbb{H} := \left\{a + b\mathbf{i} + c\mathbf{j} + d\mathbf{k}\,, a\,, b\,, c\,, d \in \mathbb{R}\right\}
\end{equation*}
generated by $1\,, \mathbf{i}\,, \mathbf{j}\,, \mathbf{k}$ subject to the relations
\begin{equation*}
\mathbf{i}^2 = \mathbf{j}^2 = \mathbf{k}^2 = \mathbf{i}\mathbf{j}\mathbf{k} = -1\,.
\end{equation*}
The algebra $\mathbb{H}$ is not commutative (for instance $\mathbf{i}\mathbf{j} = -\mathbf{j}\mathbf{i}$) and its center $\Z(\mathbb{H})$ is the set of real numbers $\mathbb{R}$\,.

\begin{remark}

Using $\mathbf{k} = \mathbf{i}\mathbf{j}$, every $q \in \mathbb{H}$ can be written uniquely as
\begin{equation*}
q = \left(a + b\mathbf{i}\right) + \left(c + d\mathbf{i}\right)\mathbf{j}\,,
\end{equation*}
i.e. 
\begin{equation*}
q = z_{1} + z_{2}\mathbf{j}\,, \qquad \left(z_{1}\,, z_{2} \in \mathbb{C}\right)\,,
\end{equation*}
where $\mathbb{C} = \mathbb{R} \oplus \mathbb{R}\mathbf{i}$ is identified with the subalgebra of $\mathbb{H}$ generated by $1$ and $\mathbf{i}$\,. 

\end{remark}

\begin{notation}

For a quaternionic number $q = a + b\mathbf{i} + c\mathbf{j} + d\mathbf{k} \in \mathbb{H}$, we denote by $\Re(q)$ and $\Im(q)$ the real and imaginary parts of $q$ respectively given by
\begin{equation*}
\Re(q) = a\,, \qquad \qquad \Im(q) = b\mathbf{i} + c\mathbf{j} + d\mathbf{k}\,.
\end{equation*}
Moreover, we denote by $\Im(\mathbb{H})$ the subset of $\mathbb{H}$ given by
\begin{equation*}
\Im(\mathbb{H}) := \left\{b\mathbf{i} + c\mathbf{j} + d\mathbf{k}\,, b\,, c\,, d \in \mathbb{R}\right\}\,.
\end{equation*}

\end{notation}

\noindent The quaternionic conjugation $\overline{q}$ of an element $q \in \mathbb{H}$ is defined by
\begin{equation*}
\overline{q} = a - b\mathbf{i} - c\mathbf{j} - d\mathbf{k}\,,
\end{equation*}
and the corresponding map $\iota$
\begin{equation}
\iota: \mathbb{H} \ni q \to \overline{q} \in \mathbb{H}
\label{MapIota}
\end{equation}
is a $\mathbb{R}$-linear involution on $\mathbb{H}$. Equivalently, if $q = z_{1} + z_{2}\mathbf{j}$, the conjugate of $q$ is given by
\begin{equation*}
\overline{q} = \overline{z_{1}} - \overline{z_{2}} \mathbf{j}\,.
\end{equation*}
The corresponding norm is given by
\begin{equation*}
\left|q\right| = \sqrt{q\overline{q}} = \sqrt{a^2 + b^2 + c^2 + d^2}\,,
\end{equation*}
and satisfies $\left|pq\right| = \left|p\right|\left|q\right|$ for all $p\,, q \in \mathbb{H}$\,.

\begin{remark}

There exists another way to write quaternionic numbers in a nice and useful form. For all $q \in \mathbb{H}$, there exist a triple $\left(a\,, b\,, u\right) \in \mathbb{R} \times \mathbb{R}_{\geq 0} \times \Im(\mathbb{H})$ such that $q = a + bu$, with $u^{2} = -1$, unique if $b > 0$.

\noindent Indeed, let $q = \Re(q) + \Im(q) \in \mathbb{H}$. If $\Im(q) = 0$, the claim is trivial ($b = 0$). Suppose that $\Im(q) \neq 0$. Let $a = \Re(q), b = \left|\Im(q)\right|,$ and $u = \frac{\Im(q)}{\left|\Im(q)\right|}$. In particular, we have $a \in \mathbb{R}, b \geq 0$, and 
\begin{eqnarray*}
q = \Re(q) + \Im(q) = \Re(q) + \left|\Im(q)\right|\frac{\Im(q)}{\left|\Im(q)\right|} = a + bu\,.
\end{eqnarray*}
Using that $\left|u\right| = 1$ and $\bar{u} = -u$ (because $u$ is purely imaginary), we get
\begin{equation*}
u^{2} = -u\overline{u} = -\left|u\right|^{2} = -1\,.
\end{equation*}

\end{remark}

\noindent Finally, the identification $\mathbb{H} \simeq \mathbb{C}^{2}$ via the map $q = z_{1} + z_{2}\mathbf{j}$ induces an injective embedding of $\mathbb{R}$-algebras
\begin{equation}
\Psi: \mathbb{H} \hookrightarrow \Mat(2\,, \mathbb{C})\,, \qquad \qquad z_{1} + z_{2}\mathbf{j} \mapsto \begin{bmatrix} z_{1} & z_{2} \\ -\overline{z}_{2} & \overline{z}_{1} \end{bmatrix}\,,
\label{EmbeddingOne}
\end{equation}
which identifies quaternionic conjugation with the Hermitian adjoint and the quaternionic norm with the determinant\,.

\begin{remark}

Using the embedding $\Psi$ defined in Equation \eqref{EmbeddingOne}, we define the reduced trace on $\mathbb{H}$ as the map
\begin{equation}
\trd: \mathbb{H} \mapsto \mathbb{R}\,, \qquad \qquad \trd(q) := \frac{1}{2}\tr\left(\Psi(q)\right)\,,
\label{ReducedTrace}
\end{equation}
where $\tr$ denotes the usual matrix trace in $\Mat(2\,, \mathbb{C})$. More precisely, if $q = z_{1} + z_{2}\mathbf{j}$ with $z_{1}\,, z_{2} \in \mathbb{C}$, then
\begin{equation*}
\trd(q) = \frac{1}{2}\left(z_{1} + \overline{z_{1}}\right) = \Re(z_{1}) \in \mathbb{R}\,.
\end{equation*}
In particular, the reduced trace coincides with the real part of a quaternion, i.e.
\begin{equation*}
\trd(q) = \frac{1}{2}\left(q + \overline{q}\right)\,.
\end{equation*}

\end{remark}

\begin{lemma}

The reduced trace $\trd$ is invariant under quaternionic conjugation (i.e. $\trd(\overline{q}) = \trd(q)$), and for all $p\,, q \in \mathbb{H}$, we have
\begin{equation*}
\trd(pq)=\trd(qp)\,.
\end{equation*}

\label{LemmaTrdOnH}

\end{lemma}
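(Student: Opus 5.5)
The plan is a direct computation in the coordinates $q = z_{1} + z_{2}\mathbf{j}$, using the closed form $\trd(q) = \Re(z_{1}) = \frac{1}{2}(q + \overline{q})$ established just above. For invariance under conjugation, I recall that $\overline{q} = \overline{z_{1}} - \overline{z_{2}}\mathbf{j}$. Hence $\trd(\overline{q}) = \Re(\overline{z_{1}}) = \Re(z_{1}) = \trd(q)$. Equivalently, the expression $\frac{1}{2}(q+\overline{q})$ is symmetric under $q \mapsto \overline{q}$, because $\iota$ is an involution.

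For the trace property, I would use the algebra embedding $\Psi$ of Equation \eqref{EmbeddingOne}. Since $\Psi$ is a homomorphism of $\mathbb{R}$-algebras, $\Psi(pq) = \Psi(p)\Psi(q)$. The cyclicity of the ordinary trace on $\Mat(2\,, \mathbb{C})$ then gives
\begin{equation*}
\trd(pq) = \tfrac{1}{2}\tr\left(\Psi(p)\Psi(q)\right) = \tfrac{1}{2}\tr\left(\Psi(q)\Psi(p)\right) = \trd(qp)\,.
\end{equation*}

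The only genuine point to check is that $\Psi$ is multiplicative. This relies on the rule $\mathbf{j}z = \overline{z}\mathbf{j}$ for $z \in \mathbb{C}$. Multiplying out $(z_{1} + z_{2}\mathbf{j})(w_{1} + w_{2}\mathbf{j}) = (z_{1}w_{1} - z_{2}\overline{w_{2}}) + (z_{1}w_{2} + z_{2}\overline{w_{1}})\mathbf{j}$ and comparing with the product of the two $2\times 2$ matrices confirms it. If one prefers to avoid $\Psi$, there is a more elementary route. Write $p = a + u$ and $q = b + v$ with $u, v \in \Im(\mathbb{H})$. Then $pq - qp = uv - vu = 2\,u \times v$ is purely imaginary, so its real part vanishes. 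Either argument is short, and I do not expect a real obstacle here.
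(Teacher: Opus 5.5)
Your proof is correct. The conjugation-invariance part is the same as the paper's, which simply notes that $\trd(q)=\frac{1}{2}(q+\overline{q})$ is unchanged under $q\mapsto\overline{q}$.

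For $\trd(pq)=\trd(qp)$, however, your main argument takes a different route. The paper never uses $\Psi$ here. It writes $p=a+u$ and $q=b+v$ with $u,v\in\Im(\mathbb{H})$, and expands to see that $pq+\overline{q}\,\overline{p}$ and $qp+\overline{p}\,\overline{q}$ both equal $2ab+(uv+vu)$, hence $\Re(pq)=\Re(qp)$. The fact that $pq-qp\in\Im(\mathbb{H})$ is then recorded as a corollary. Your fallback argument is essentially this computation run in the other direction: commutator first, via $uv-vu=2\,u\times v$, and then the trace identity.

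Your $\Psi$-route is the more conceptual one. Since $\trd$ is defined as $\frac{1}{2}\tr\circ\Psi$, cyclicity is inherited from $\Mat(2,\mathbb{C})$ as soon as $\Psi$ is multiplicative, and your check of that via $\mathbf{j}z=\overline{z}\mathbf{j}$ is correct. It also scales for free. For the extension $\Psi:\Mat(n)\to\Mat(2n,\mathbb{C})$ one has $\tr(\Psi(\A))=2\trd(\A)$, so the same one line gives the matrix identity $\trd(\A\B)=\trd(\B\A)$. The paper obtains that identity later by an entrywise reduction to the scalar lemma.

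The paper's computation, in exchange, is entirely self-contained. It needs only the real/imaginary splitting and $\overline{pq}=\overline{q}\,\overline{p}$, and it hands you the commutator corollary directly.
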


\begin{proof}

For $q \in \mathbb{H}$, we have
\begin{equation*}
\trd(\overline{q}) = \frac{1}{2}\left(\overline{q} + \overline{\overline{q}}\right) = \frac{1}{2}\left(\overline{q} + q\right) = \trd(q)\,.
\end{equation*}

\noindent Let $p\,, q \in \mathbb{H}$. We first show that $\Re(pq) = \Re(qp)$. Indeed,
\begin{equation*}
\Re(pq) = \frac{1}{2}\left(pq + \overline{pq}\right) = \frac{1}{2}\left(pq + \bar{q}\bar{p}\right)\,,
\end{equation*}
and
\begin{equation*}
\Re(qp) = \frac{1}{2}\left(qp + \overline{qp}\right) = \frac{1}{2}\left(qp + \bar{p}\bar{q}\right)\,.
\end{equation*}
We now write $p = a + u$ and $q = b + v$, with $a\,, b \in \mathbb{R}$ $u\,, v \in \Im(\mathbb{H})$. Then
\begin{equation*}
pq = ab + av + bu + uv\,, \qquad \bar{q} \bar{p} = (b-v)(a-u) = ab - au - bv + vu,
\end{equation*}
i.e.
\begin{equation*}
pq + \bar{q}\bar{p} = 2ab + (uv + vu)\,.
\end{equation*}
Similarly, we get
\begin{equation*}
qp + \bar{p}\bar{q} = 2ab + (uv + vu)\,.
\end{equation*}
Hence $pq + \bar{q}\bar{p} = qp + \bar{p}\bar{q}$, i.e. $\Re(pq) = \Re(qp)$, so $\trd(pq) = \trd(qp)$\,.

\end{proof}

\begin{corollary}

For all $p\,, q \in \mathbb{H}$, we have $pq - qp \in \Im(\mathbb{H})$\,.

\end{corollary}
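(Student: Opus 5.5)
The statement follows from Lemma \ref{LemmaTrdOnH}. I will show that the real part of $pq - qp$ vanishes, because a quaternion $r$ lies in $\Im(\mathbb{H})$ exactly when $\Re(r) = 0$. That characterization is immediate from the decomposition $r = \Re(r) + \Im(r)$ recalled in the notation above.

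The steps are as follows. The map $\trd = \Re$ is $\mathbb{R}$-linear on $\mathbb{H}$, as is clear from the formula $\trd(q) = \frac{1}{2}(q + \overline{q})$ and the $\mathbb{R}$-linearity of $\iota$. Applying this linearity gives
\begin{equation*}
\Re(pq - qp) = \trd(pq) - \trd(qp)\,.
\end{equation*}
Lemma \ref{LemmaTrdOnH} gives $\trd(pq) = \trd(qp)$, so the right-hand side is zero. Hence $pq - qp$ has zero real part, and therefore $pq - qp \in \Im(\mathbb{H})$.

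There is no real obstacle here. The only point to make explicit is that $\trd$ coincides with $\Re$ and is additive, and both facts are already recorded in the remark defining the reduced trace. As a sanity check, the computation inside the proof of Lemma \ref{LemmaTrdOnH} gives the same conclusion directly. Write $p = a + u$ and $q = b + v$ with $u, v \in \Im(\mathbb{H})$. Then
\begin{equation*}
pq - qp = uv - vu\,,
\end{equation*}
and $uv - vu$ is twice the cross product of $u$ and $v$, which is purely imaginary.
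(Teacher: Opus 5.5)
Your proof is correct and is essentially the paper's argument: the paper gives no separate proof, since the corollary follows at once from Lemma~\ref{LemmaTrdOnH}, i.e.\ from $\Re(pq)=\Re(qp)$ together with the $\mathbb{R}$-linearity of $\Re$, which gives $\Re(pq-qp)=0$. Your direct check that $pq-qp = uv-vu$ is also correct, though not needed.
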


\subsection{The cone of positive quaternionic hermitian matrices}

\label{SectionConeHermitianMatrices}

We keep the notations of Section \ref{SectionOne}. Let $\V$ be a right-$\mathbb{H}$-module with $\dim_{\mathbb{H}}(\V) = n$, and let $\langle\cdot\,, \cdot\rangle: \V \times \V \to \mathbb{H}$ be a positive $\left(\iota\,, 1\right)$-Hermitian form on $\V$, with $\iota(\X) = \overline{\X}$. As explained in Theorem \ref{TheoremClassificationHermitian}, there exists a basis $\mathscr{B}$ such that $\Mat_{\mathscr{B}}(\langle\cdot\,, \cdot\rangle) = \Id_{n}$\,.

\noindent We denote by $\Sp(\V)$ the subgroup of $\GL(\V)$ given by
\begin{equation*}
\Sp(\V) := \left\{\T \in \GL(\V)\,, \langle \T(u)\,, \T(v)\rangle = \langle u\,, v\rangle\,, \left(u\,, v \in \V\right)\right\}\,.
\end{equation*}
Similarly, using the basis $\mathscr{B}$ and replacing $\T$ by $\Mat_{\mathscr{B}}(\T)$, we denote by $\Sp(n\,, \mathbb{H})$ the subgroup of $\GL(n)$ given by
\begin{equation*}
\Sp(n\,, \mathbb{H}) = \left\{g \in \GL(n)\,, \iota(g)^{t}g = \Id_{n}\right\} = \left\{g \in \GL(n)\,, g^{*}g = \Id_{n}\right\}\,.
\end{equation*}
We denote by $\mathfrak{sp}(\V)$ the Lie algebra of $\Sp(\V)$, so
\begin{equation*}
\mathfrak{sp}(\V) := \left\{\X \in \End(\V)\,, \langle \X(u)\,, v\rangle + \langle u\,, \X(v)\rangle = 0\,, \left(u\,, v \in \V\right)\right\}\,,
\end{equation*}
and let $\mathfrak{sp}(n\,, \mathbb{H}) = \left\{\X \in \Mat(n)\,, \X^{*} + \X = 0\right\}$\,.

\noindent To simplify the notations, we will write $\Sp(n)$ and $\mathfrak{sp}(n)$ instead of $\Sp(n\,, \mathbb{H})$ and $\mathfrak{sp}(n\,, \mathbb{H})$\,.

\begin{remark}

\begin{enumerate}
\item The quaternionic symplectic group $\Sp(n)$ is connected and compact (see \cite{KNAPP})\,.
\item For two matrices $\A\,, \B \in \Mat(n)$, we have $\left(\A\B\right)^{t} \neq \B^{t}\A^{t}$ in general. However, we have $\left(\A\B\right)^{*} = \B^{*}\A^{*}$. Moreover, if $\A\,, \B$ are invertible, we have $\iota(\A)^{-1} \neq \iota(\A^{-1})$ in general, but
\begin{equation*}
\left(\A^{*}\right)^{-1} = \left(\A^{-1}\right)^{*}\,, \qquad \qquad \left(\A\B\right)^{-1} = \B^{-1}\A^{-1}\,.
\end{equation*}
The proof of this result can be found in \cite[Theorem~4.1]{ZHANG}\,.
\end{enumerate}

\label{RemarkPropertiesMatrices}

\end{remark}

\noindent We denote by $\mathfrak{p}$ the subset of $\Mat(n)$ given by
\begin{equation*}
\mathfrak{p} = \left\{\X \in \Mat(n)\,, \X = \X^{*}\right\}\,.
\end{equation*}
The set $\mathfrak{p}$ is known as the set of quaternionic hermitian matrices. Moreover, we get
\begin{equation*}
\left[\mathfrak{sp}(n)\,, \mathfrak{sp}(n)\right] \subseteq \mathfrak{sp}(n)\,, \qquad \left[\mathfrak{sp}(n)\,, \mathfrak{p}\right] \subseteq \mathfrak{p}\,, \qquad \left[\mathfrak{p}\,, \mathfrak{p}\right] \subseteq \mathfrak{sp}(n)\,,
\end{equation*}
where $\left[\cdot\,, \cdot\right]$ is the Lie bracket on $\Mat(n)$ given by $\left[\A\,, \B\right] = \A\B - \B\A$\,.

\medskip

\noindent We now recall some results of \cite{ZHANG} concerning eigenvalues of quaternionic matrices. In this paper, we will only deal with right eigenvalues, i.e. that we say that $\lambda \in \mathbb{H}$ is an eigenvalue of $\A \in \Mat(n)$ is there exists a non-zero vector $v \in \mathbb{H}^{n}$ such that $\A v = v\lambda$. 

\noindent The main difference between quaternionic and real/complex matrices is that in general, a matrix has infinitely many eigenvalues. Indeed, let $\A \in \Mat(n)$ and $\lambda \in \mathbb{H}$ be a right eigenvalue of $\A$. Then for every $q \in \mathbb{H}^{*}$, the quaternion $q^{-1} \lambda q$ is also a right eigenvalue of $\A$. Indeed, let $v \in \mathbb{H}^{n} \setminus\{0\}$ satisfying $\A v = v \lambda$. For any $q \in \mathbb{H}^{*}$, define $w := v q$. Then $w \neq 0$ and
\begin{equation*}
\A w = \A(v q) = (\A v) q = v \lambda q = (v q)(q^{-1}\lambda q) = w (q^{-1}\lambda q)\,,
\end{equation*}
which shows that $q^{-1}\lambda q$ is also a right eigenvalue of $\A$\,. 

\begin{remark}

For all $\lambda \in \mathbb{H}$, we denote by $\left[\lambda\right]$ the subset of $\mathbb{H}^{n}$ given by
\begin{equation*}
\left[\lambda\right] := \left\{q\lambda q^{-1}\,, q \in \mathbb{H}^{*}\right\}\,.
\end{equation*}
As explained in \cite{ZHANG}, for all $q \in \mathbb{H}$ such that $\Im(q) \neq 0$, then the set
\begin{equation*}
\left[q\right] \cap \left\{z \in \mathbb{C}\,, \Im(z) > 0\right\}
\end{equation*}
has a unique element. Moreover, every $\A \in \Mat(n)$ has exactly $n$ right-eigenvalues (up to conjugation) and the previous equation give us a canonical choice for the spectrum of $\A$, made of complex numbers with positive imaginary parts.

\label{RemarkNEigenvalues}

\end{remark}

\noindent However, the situation is slightly easier for Hermitian matrices\,.

\begin{lemma}

Let $\A \in \mathfrak{p} \subseteq \Mat(n)$. Then every right eigenvalue of $\A$ is real\,.

\label{LemmaRealSpectrum}

\end{lemma}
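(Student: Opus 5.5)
The plan is the standard Rayleigh-quotient argument, done carefully because $\mathbb{H}$ is not commutative. Let $\lambda \in \mathbb{H}$ be a right eigenvalue of $\A \in \mathfrak{p}$, and fix a non-zero $v \in \mathbb{H}^{n}$ with $\A v = v\lambda$. I will compute the quaternion $v^{*}\A v$ in two ways.

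First, I show that $v^{*}\A v$ is real. By Remark \ref{RemarkPropertiesMatrices}, $(\X\Y)^{*} = \Y^{*}\X^{*}$ holds for quaternionic matrices; applying this to $1\times n$, $n\times n$ and $n\times 1$ factors gives
\begin{equation*}
\overline{v^{*}\A v} = \left(v^{*}\A v\right)^{*} = v^{*}\A^{*}v = v^{*}\A v\,.
\end{equation*}
A quaternion equal to its own conjugate lies in $\mathbb{R}$, so $v^{*}\A v \in \mathbb{R}$.

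Second, I substitute the eigenvalue equation. This gives $v^{*}\A v = v^{*}(v\lambda) = (v^{*}v)\lambda$, using associativity of matrix multiplication over $\mathbb{H}$. Here $v^{*}v = \sum_{i}\overline{v_{i}}v_{i} = \sum_{i}|v_{i}|^{2}$ is a strictly positive real number, since $v \neq 0$. Hence
\begin{equation*}
\lambda = \left(v^{*}v\right)^{-1}\left(v^{*}\A v\right) \in \mathbb{R}\,.
\end{equation*}

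The only step that needs care is the order of the factors. Since $\lambda$ multiplies $v$ on the right, it must be pulled out on the right, giving $(v^{*}v)\lambda$ rather than $\lambda(v^{*}v)$. This causes no difficulty because $v^{*}v$ is a real scalar and therefore central in $\mathbb{H}$. I also need $\overline{pq} = \overline{q}\,\overline{p}$ in $\mathbb{H}$, which underlies the $1\times 1$ case of $(\X\Y)^{*} = \Y^{*}\X^{*}$.

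Finally, this is consistent with Remark \ref{RemarkNEigenvalues}. The similarity class $\left[\lambda\right] = \{q\lambda q^{-1}\}$ of a real $\lambda$ reduces to $\{\lambda\}$, so all right eigenvalues of $\A$, not merely a representative of each class, are real.
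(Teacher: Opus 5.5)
Your proof is correct and takes essentially the same approach as the paper. The paper evaluates $\langle \A v, v\rangle = \langle v, \A v\rangle$ in two ways to get $\overline{\lambda}\langle v, v\rangle = \langle v, v\rangle\lambda$; you run the same Rayleigh-quotient argument as $v^{*}\A v = \overline{v^{*}\A v}$ together with $v^{*}\A v = (v^{*}v)\lambda$, and in both versions the key point is that the real scalar $v^{*}v > 0$ is central in $\mathbb{H}$.
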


\begin{proof}

We use the positive hermitian form on $\mathbb{H}^{n}$ defines above
\begin{equation*}
\langle x\,, y\rangle = x^{*}y\,.
\end{equation*}
Let $v \in \mathbb{H}^{n}$ non-zero and $\lambda \in \mathbb{H}$ satisfying $\A v = v\lambda$. Using that $\A = \A^{*}$, we get
\begin{equation*}
\langle \A x\,, y\rangle = \langle x\,, \A y\rangle\,, \qquad \left(x\,, y \in \mathbb{H}^{n}\right)\,.
\end{equation*}
In particular,
\begin{equation*}
\langle \A v\,, v\rangle = \langle v\,,\A v\rangle.
\end{equation*}
Using that $\A v = v\lambda$, we get that
\begin{equation*}
\langle \A v\,, v\rangle = \langle v\lambda\,, v\rangle = \overline{\lambda}\langle v\,, v\rangle\,,
\end{equation*}
and
\begin{equation*}
\langle v,\A v\rangle = \langle v, v\lambda\rangle = \langle v\,, v\rangle\lambda\,.
\end{equation*}
Therefore
\begin{equation*}
\overline{\lambda}\langle v\,,v\rangle = \langle v\,,v\rangle\lambda\,.
\end{equation*}
Finally, using that $\langle v\,, v\rangle \in \mathbb{R}^{*}$ and that $\mathbb{R}$ commute with $\mathbb{H}$, we get $\overline{\lambda} = \lambda$. Thus $\lambda\in\mathbb{R}$\,.

\end{proof}

\begin{definition}

We say that a matrix $\X \in \Mat(n)$ is positive if $\langle\X v\,, v\rangle > 0$ for all non-zero $v \in \mathbb{H}^{n}$\,.

\end{definition}

\noindent We denote by $\mathscr{P}$ the cone of positive Hermitian matrices, and let 
\begin{equation*}
\exp: \Mat(n) \mapsto \GL(n)
\end{equation*}
be the exponential map\,.

\begin{lemma}

For all $\X \in \mathfrak{p}$, we have $\exp(\X) \in \mathscr{P}$\,.

\end{lemma}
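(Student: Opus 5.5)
The plan is to show that $\exp(\X)$ is Hermitian and then that it is positive, using only the definition of positivity $\langle \Y v\,, v\rangle > 0$. Throughout, the one structural fact I need is that $\X$ commutes with its own powers, which lets me factor $\exp(\X)$ as a square.

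\textbf{Step 1: $\exp(\X)$ is Hermitian.} Since $(\A\B)^{*} = \B^{*}\A^{*}$ (Remark \ref{RemarkPropertiesMatrices}) and $*$ is $\mathbb{R}$-linear, I get $(\X^{k})^{*} = (\X^{*})^{k} = \X^{k}$ for every $k$. The map $*$ is continuous and the exponential series converges absolutely; the quaternionic entries lie in $\mathbb{R}^{4}$ and real scalars are central. Passing to the limit in the partial sums therefore gives $\exp(\X)^{*} = \exp(\X^{*}) = \exp(\X)$, so $\exp(\X) \in \mathfrak{p}$.

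\textbf{Step 2: factor $\exp(\X)$.} The matrices $\frac{1}{2}\X$ and $\frac{1}{2}\X$ commute, so the usual Cauchy product argument for absolutely convergent series over a real Banach algebra applies. It uses only associativity and the centrality of rational coefficients, both available for $\Mat(n\,,\mathbb{H})$. This gives
\begin{equation*}
\exp(\X) = \exp\left(\tfrac{1}{2}\X\right)\exp\left(\tfrac{1}{2}\X\right) = \Y^{*}\Y\,, \qquad \Y := \exp\left(\tfrac{1}{2}\X\right)\,.
\end{equation*}
The identification $\Y^{*} = \Y$ follows from Step 1 applied to $\frac{1}{2}\X \in \mathfrak{p}$.

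\textbf{Step 3: positivity.} For a non-zero $v \in \mathbb{H}^{n}$,
\begin{equation*}
\langle \exp(\X)v\,, v\rangle = v^{*}\Y^{*}\Y v = (\Y v)^{*}(\Y v) = \sum_{i}\left|(\Y v)_{i}\right|^{2} \geq 0\,.
\end{equation*}
Equality would force $\Y v = 0$. But $\Y$ is invertible, with inverse $\exp\left(-\frac{1}{2}\X\right)$ by the same commuting-product identity, so $\Y v \neq 0$. Hence the quantity is strictly positive, and $\exp(\X) \in \mathscr{P}$.

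The main obstacle is Step 2: justifying $\exp(\A+\B) = \exp(\A)\exp(\B)$ for commuting quaternionic matrices, given that $\mathbb{H}$ is noncommutative. I will handle this by embedding $\Mat(n\,,\mathbb{H})$ into $\Mat(2n\,,\mathbb{C})$ entrywise via $\Psi$ from \eqref{EmbeddingOne}. This is an injective homomorphism of $\mathbb{R}$-algebras, continuous, and compatible with $*$, so it commutes with $\exp$. The identity then transfers from the complex case; for that matter, positivity itself could alternatively be checked through $\Psi$.
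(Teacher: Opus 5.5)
Your proof is correct, and it reaches positivity by a different route from the paper's. Step 1 (that $\exp(\X)$ is Hermitian) is the same as in the paper.

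\textbf{The paper's route.} It argues spectrally. By Lemma~\ref{LemmaRealSpectrum} the right eigenvalues $\lambda_{1},\ldots,\lambda_{n}$ of $\X$ are real, so the spectrum of $\exp(\X)$ is $\{e^{\lambda_{1}},\ldots,e^{\lambda_{n}}\}\subseteq\mathbb{R}_{>0}$, and the paper concludes that $\exp(\X)$ is positive. That last step tacitly uses two facts:
\begin{enumerate}
\item a spectral mapping statement for the quaternionic exponential;
\item that a Hermitian quaternionic matrix $\A$ with positive eigenvalues satisfies $v^{*}\A v>0$ for all non-zero $v$.
\end{enumerate}
The second fact really rests on the quaternionic spectral theorem (Theorem~\ref{SpectralTheorem}), which is only stated after the lemma.

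\textbf{Your route.} The factorization $\exp(\X)=\Y^{*}\Y$ with $\Y=\exp(\tfrac{1}{2}\X)$ invertible checks the defining inequality directly and uses no eigenvalue theory. It needs only $(\A\B)^{*}=\B^{*}\A^{*}$, $\overline{q}q=|q|^{2}$, and $\exp(\tfrac{1}{2}\X)^{2}=\exp(\X)$, so it works verbatim over $\mathbb{R}$, $\mathbb{C}$ and $\mathbb{H}$.

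\textbf{Comparison.} The paper's route gives extra information, namely the spectrum of $\exp(\X)$, which fits its spectral-theorem proof of Theorem~\ref{ExponentialTheorem}. As a proof of this lemma, yours is more self-contained and avoids the unstated steps above.

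\textbf{Minor point on Step 2.} What you call the main obstacle is not really one. Every term of the Cauchy product of the two series is a real multiple of a power of the single matrix $\X$, so the noncommutativity of $\mathbb{H}$ never enters. The detour through $\Psi$ is valid but unnecessary.
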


\begin{proof}

Let $\X \in \mathfrak{p}$, i.e. $\X = \X^{*}$. Using that for all $k \geq 0$, we have $\left(\X^{*}\right)^{k} = \left(\X^{k}\right)^{*}$, we get that $\exp(\X)^{*} = \exp(\X^{*}) = \exp(\X)$, i.e. $\exp(\X) \in \mathfrak{p}$. Moreover, it follows from Remark \ref{RemarkNEigenvalues} and Lemma \ref{LemmaRealSpectrum} that $\Spec(\X) = \left\{\lambda_{1}\,, \lambda_{2}\,, \ldots\,, \lambda_{n}\right\}$, with $\lambda_{i} \in \mathbb{R}\,, 1 \leq i \leq n$. Therefore, $\Spec(\exp(\X)) = \left\{e^{\lambda_{1}}\,, \ldots\,, e^{\lambda_{n}}\right\} \subseteq \mathbb{R}_{>0}$, i.e. $\exp(\X)$ is positive. Then $\exp(\X) \in \mathscr{P}$\,.

\end{proof}

\noindent In the following theorem, we recall an important result of \cite[Corollary~6.2]{ZHANG} that is crucial to show that $\exp: \mathfrak{p} \to \mathscr{P}$ is bijective.

\begin{theo}

Let $\X \in \mathfrak{p}$. Then there exist a matrix $\U \in \Sp(n)$ and real numbers $\lambda_{1}\,, \ldots\,, \lambda_{n} \in \mathbb{R}$ such that
\begin{equation*}
\X = \U\diag(\lambda_{1}\,, \ldots\,, \lambda_{n})\U^{*}\,.
\end{equation*}

\label{SpectralTheorem}

\end{theo}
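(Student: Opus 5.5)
The plan is to use the standard complex representation of quaternionic matrices and reduce to the complex spectral theorem. Write $\X = \X_{1} + \X_{2}\mathbf{j}$ with $\X_{1}, \X_{2} \in \Mat(n\,, \mathbb{C})$. Extending the embedding $\Psi$ of \eqref{EmbeddingOne} entrywise gives an injective $\mathbb{R}$-algebra homomorphism
\begin{equation*}
\Psi_{n}: \Mat(n) \to \Mat(2n\,, \mathbb{C})\,, \qquad \X_{1} + \X_{2}\mathbf{j} \mapsto \begin{bmatrix} \X_{1} & \X_{2} \\ -\overline{\X_{2}} & \overline{\X_{1}} \end{bmatrix}\,,
\end{equation*}
with $\Psi_{n}(\X^{*}) = \Psi_{n}(\X)^{*}$. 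Hence $\Psi_{n}(\X)$ is a complex Hermitian matrix, and by the complex spectral theorem it has real eigenvalues and an orthonormal eigenbasis of $\mathbb{C}^{2n}$.

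Rather than transport the diagonalization back through $\Psi_{n}$, which requires tracking the structure map $\sigma(v) = \J_{2n}\overline{v}$ that commutes with $\Psi_{n}(\X)$, I would argue directly by induction on $n$ in $\mathbb{H}^{n}$ with the form $\langle x\,, y\rangle = x^{*}y$. The case $n = 1$ is immediate, since a quaternionic Hermitian $1\times 1$ matrix is real.

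\emph{Step 1: existence of an eigenvector.} A real eigenvalue $\lambda$ of $\Psi_{n}(\X)$ with eigenvector $w \in \mathbb{C}^{2n}$ corresponds, under the identification $\mathbb{H}^{n} \cong \mathbb{C}^{2n}$ adapted to $\Psi_{n}$, to a nonzero $v \in \mathbb{H}^{n}$ with $\X v = v\lambda$. Here it matters that $\lambda$ is real, so left and right scalar actions agree. Alternatively, one can maximize $\Re\langle \X v\,, v\rangle = \langle \X v\,, v\rangle$ on the unit sphere of $\mathbb{H}^{n}$, which is compact. A Lagrange-multiplier argument then gives $\X v = v\lambda$ with $\lambda \in \mathbb{R}$, consistent with Lemma \ref{LemmaRealSpectrum}. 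I would likely use this variational route because it avoids bookkeeping with $\Psi_{n}$.

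\emph{Step 2: reduction.} Normalize $v$ so that $\langle v\,, v\rangle = 1$. The right-$\mathbb{H}$-submodule $v^{\perp} = \{ y\,, \langle v\,, y\rangle = 0\}$ is $\X$-stable, because $\langle v\,, \X y\rangle = \langle \X v\,, y\rangle = \lambda\langle v\,, y\rangle = 0$, using that $\lambda$ is real. Extend $v$ to an orthonormal right basis of $\mathbb{H}^{n}$ by quaternionic Gram--Schmidt, which works because $\langle\cdot\,, \cdot\rangle$ is positive and $\mathbb{H}$ is a division algebra. Collect this basis as the columns of a matrix $\U_{0}$. Then $\U_{0}^{*}\U_{0} = \Id_{n}$, so $\U_{0} \in \Sp(n)$, and
\begin{equation*}
\U_{0}^{*}\X\U_{0} = \diag(\lambda\,, \X')\,,
\end{equation*}
where $\X' \in \Mat(n-1)$ is again Hermitian.

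\emph{Step 3: induction.} Apply the induction hypothesis to $\X'$ and combine the resulting unitary with $\U_{0}$. This uses that $\Sp(n)$ is closed under products and block sums.

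The main obstacle is Step 1, namely producing a genuine right eigenvector with a real eigenvalue. The delicate points are the noncommutativity and whether the Lagrange condition really yields $\X v = v\lambda$ rather than a left-scalar relation. I would handle this by computing the real derivative of $v \mapsto \langle \X v\,, v\rangle - \mu\langle v\,, v\rangle$ in the direction $h$. This gives $2\Re\langle h\,, (\X - \mu)v\rangle = 0$ for all $h \in \mathbb{H}^{n}$, and therefore $\X v = v\mu$ with $\mu$ real.
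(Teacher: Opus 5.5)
Your proof is correct, and it follows a different route from the paper. The paper gives no proof of this statement itself; it simply quotes \cite[Corollary~6.2]{ZHANG}.

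In Zhang's paper the result comes out of the general eigenvalue theory of quaternionic matrices, in three steps. First, the complex adjoint matrix (essentially the map $\Psi$ of the appendix) shows that every quaternionic matrix has a right eigenvalue. Second, this gives a quaternionic Schur triangularization, using the same Gram--Schmidt deflation you use. Third, for Hermitian $\X$ the triangular form is forced to be diagonal with self-conjugate, hence real, entries.

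Your route replaces the eigenvalue-existence step by maximizing the real-valued form $v \mapsto v^{*}\X v$ on the unit sphere of $\mathbb{H}^{n} \cong \mathbb{R}^{4n}$. The identity $2\Re(h^{*}(\X v - v\mu)) = 0$ for all $h$, with the choice $h = \X v - v\mu$, does give $\X v = v\mu$ with $\mu$ real. The deflation onto $v^{\perp}$ and the induction are sound.

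What your approach buys is self-containment. It needs neither the complex adjoint nor the theory of similarity classes of right eigenvalues. The reality of the eigenvalue comes directly from the real Lagrange multiplier, not from a separate lemma.

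What it gives up is generality. The variational step works only because $v^{*}\X v$ is real. Zhang's route also yields the Schur form for arbitrary quaternionic matrices and unitary diagonalization of normal ones.

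One detail should be stated explicitly in Step 3. Writing $\X = \U_{0}\diag(\lambda, \X')\U_{0}^{*}$ requires $\U_{0}\U_{0}^{*} = \Id_{n}$, not just $\U_{0}^{*}\U_{0} = \Id_{n}$. This follows by applying the injective multiplicative map $\Psi$ and using that a one-sided inverse of a square complex matrix is two-sided.
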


\noindent We can now prove the following result\,.

\begin{theo}

The matrix exponential
\begin{equation*}
\exp:\mathfrak{p} \mapsto \mathscr{P}\,, \qquad \qquad \H \mapsto \exp(\H)
\end{equation*}
is well-defined and bijective\,. 

\label{ExponentialTheorem}

\end{theo}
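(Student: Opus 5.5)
We need to prove that $\exp:\mathfrak{p}\to\mathscr{P}$ is well-defined and bijective in the quaternionic case. The plan rests on the spectral theorem (Theorem \ref{SpectralTheorem}); we also use the preceding lemma, which already gives $\exp(\mathfrak{p})\subseteq\mathscr{P}$.

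\emph{Surjectivity.} Let $\P\in\mathscr{P}$. By Theorem \ref{SpectralTheorem} we can write $\P=\U\diag(\mu_1,\ldots,\mu_n)\U^{*}$ with $\U\in\Sp(n)$ and $\mu_i\in\mathbb{R}$. Each $\mu_i$ is positive. To see this, let $u_i$ be the $i$-th column of $\U$. Since $\U^{*}\U=\Id_{n}$, we get $\P u_i=u_i\mu_i$ and
\begin{equation*}
\langle \P u_i\,, u_i\rangle=\mu_i\langle u_i\,, u_i\rangle=\mu_i>0.
\end{equation*}
Now set $\H=\U\diag(\log\mu_1,\ldots,\log\mu_n)\U^{*}$. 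Then $\H^{*}=\H$, so $\H\in\mathfrak{p}$. Because $\U^{*}=\U^{-1}$ (Remark \ref{RemarkPropertiesMatrices}), we have $\H^k=\U\diag((\log\mu_i)^k)\U^{*}$ for every $k$, and summing the series gives $\exp(\H)=\P$. Real scalars commute with quaternions, so diagonal real matrices behave exactly as in the commutative case.

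\emph{Injectivity.} This is the main obstacle, because we cannot simply invoke complex functional calculus without care. Suppose $\exp(\H_1)=\exp(\H_2)=\P$ with $\H_1,\H_2\in\mathfrak{p}$. The key step is to show that any $\H\in\mathfrak{p}$ with $\exp(\H)=\P$ is determined by $\P$. I will prove that $\H$ acts on each eigenspace $E_\mu=\{v\in\mathbb{H}^n\,,\ \P v=v\mu\}$ as right multiplication by $\log\mu$.

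To do this, diagonalize $\H=\V\diag(\lambda_1,\ldots,\lambda_n)\V^{*}$ by Theorem \ref{SpectralTheorem}, so that $\P=\V\diag(e^{\lambda_i})\V^{*}$. The columns $v_i$ of $\V$ form a right basis of $\mathbb{H}^n$ satisfying $\H v_i=v_i\lambda_i$ and $\P v_i=v_ie^{\lambda_i}$. Take $v=\sum_i v_i\alpha_i\in E_\mu$. Comparing coefficients in $\P v=v\mu$ gives $e^{\lambda_i}\alpha_i=\alpha_i\mu$. Since $\mu$ and $e^{\lambda_i}$ are real, this means $\alpha_i=0$ unless $e^{\lambda_i}=\mu$, which by injectivity of the real exponential forces $\lambda_i=\log\mu$. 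Hence $\H v=v\log\mu$.

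The eigenspaces $E_\mu$ of $\P$ (computed from its own diagonalization) span $\mathbb{H}^n$. Therefore $\H_1$ and $\H_2$ agree on a right basis, and since both are right-$\mathbb{H}$-linear, $\H_1=\H_2$. One point to check is that multiplication by the real scalar $\log\mu$ commutes with $\mathbb{H}$, so the map $v\mapsto v\log\mu$ is right-$\mathbb{H}$-linear. Alternatively, one could embed into $\Mat(2n,\mathbb{C})$ via $\Psi$ and use complex injectivity, but the direct argument above avoids checking that $\Psi$ preserves Hermitian-ness and positivity.
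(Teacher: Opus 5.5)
Your proof is correct, and it differs from the paper's mainly in the injectivity step.

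\textbf{Surjectivity.} This half is the same as the paper's: spectral theorem, then $\H=\U\diag(\log\mu_i)\U^{*}$. The one difference is that you actually justify $\mu_i>0$ by computing $\langle \P u_i, u_i\rangle=\mu_i$, whereas the paper simply asserts it.

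\textbf{Injectivity.} The paper diagonalizes both $\H_1$ and $\H_2$ and appeals to uniqueness of the spectral decomposition of $\exp(\H_1)=\exp(\H_2)$. This gives an equality of \emph{sets} $\{e^{\lambda_i}\}=\{e^{\nu_i}\}$, hence equal eigenvalue sets. The paper then concludes $\H_1=\H_2$ because both are unitarily diagonalizable with the same real eigenvalues. That last inference is not valid on its own: $\diag(1,2)$ and $\diag(2,1)$ satisfy its hypotheses. The set equality also ignores multiplicities. What is really needed is that the eigenspaces of $\H_1$ and $\H_2$ match, and this is only implicit in the paper's phrase ``unitary change of basis within each eigenspace''.

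Your argument supplies exactly that missing link. You expand $v\in E_\mu$ in the right eigenbasis of $\H$ and use that $\mu$ and $e^{\lambda_i}$ are real, so they commute with the quaternionic coefficients $\alpha_i$. This shows $\H$ acts on $E_\mu$ as right multiplication by $\log\mu$. Hence $\H$ is determined by $\P$ on the right basis given by the columns of $\U$, and right-$\mathbb{H}$-linearity finishes the proof.

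Your route costs a few more lines but is self-contained, using only Theorem \ref{SpectralTheorem} and right-linearity. The paper's version is shorter but leaves its key step unproved.
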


\begin{proof}

We first prove the surjectivity of $\exp: \mathfrak{p} \to \mathscr{P}$. Let $\P \in \mathscr{P}$. By the quaternionic spectral theorem, there exist $\U \in \Sp(n)$ and real numbers $\mu_{1}\,, \ldots\,, \mu_{n} > 0$ such that
\begin{equation*}
\P = \U\diag(\mu_{1}\,, \ldots\,, \mu_{n})U^{*}\,.
\end{equation*}
Define
\begin{equation*}
\H := \U\diag(\log(\mu_{1})\,, \ldots\,, \log(\mu_{n}))\U^{*}\,.
\end{equation*}
Then $\H^{*} = \H$, i.e. $\H \in \mathfrak{p}$, and
\begin{equation*}
\exp(\H) = \U\diag(e^{\log(\mu_{1})}\,, \ldots\,, e^{\log(\mu_{n})})\U^{*} = \U\diag(\mu_{1}\,, \ldots\,, \mu_{n})\U^{*} = \P\,.
\end{equation*}
Thus $\exp$ is surjective\,.

\noindent We now prove that injectivity of $\exp$ on $\mathfrak{p}$. Assume $\H_{1}\,, \H_{2} \in \mathfrak{p}$ are such that $\exp(\H_{1}) = \exp(\H_{2})$. By the spectral theorem, we take $\U_{1}\,, \U_{2} \in \Sp(n\,, \mathbb{H})$ and real numbers $\lambda_{1}\,, \ldots\,, \lambda_{n}\,, \nu_{1}\,, \ldots\,, \nu_{n}$ such that
\begin{equation*}
\H_{1} = \U_{1} \diag(\lambda_{1}\,, \ldots\,, \lambda_{n}) \U^{*}_{1}\,, \qquad
\H_{2} = \U_{2} \diag(\nu_{1}\,, \ldots\,, \nu_{n}) \U^{*}_{2}\,.
\end{equation*}
Then
\begin{equation*}
\exp(\H_{1}) = \U_{1} \diag(e^{\lambda_{1}}\,, \ldots\,, e^{\lambda_{n}})\U^{*}_{1}\,, \qquad \exp(\H_{2}) = \U_{2} \diag(e^{\nu_{1}}\,, \ldots\,, e^{\nu_{n}})\U^{*}_{2}\,.
\end{equation*}
Since $\exp(\H_{1}) = \exp(\H_{2})$ is a positive Hermitian matrix, its spectral decomposition is unique up to permutation of the diagonal entries and unitary change of basis within each eigenspace. In particular, we have an equality of sets
\begin{equation*}
\left\{e^{\lambda_{1}}\,, \ldots\,, e^{\lambda_{n}}\right\} = \left\{e^{\nu_{1}}\,, \ldots\,, e^{\nu_{n}}\right\}\,.
\end{equation*}
Because the real exponential $t \mapsto e^{t}$ is injective on $\mathbb{R}$, it follows that
\begin{equation*}
\left\{\lambda_{1}\,, \ldots\,, \lambda_{n}\right\} = \left\{\nu_{1}\,, \ldots\,, \nu_{n}\right\}\,.
\end{equation*}
Hence $\H_{1}$ and $\H_{2}$ are unitarily diagonalizable with the same real eigenvalues, so $\H_{1} = \H_{2}$\,.

\end{proof}

\noindent In particular, it follows from Theorem \ref{ExponentialTheorem} that the map $\exp: \mathfrak{p} \to \mathscr{P}$ is invertible. Let $\log := \exp^{-1}: \mathscr{P} \to \mathfrak{p}$ be the inverse of $\exp$. 

\begin{definition}

For all $t \in \mathbb{R}$ and $\X \in \mathscr{P}$, we denote by $\X^{t}$ the matrix in $\mathscr{P}$ given by
\begin{equation*}
\X^{t} := \exp\left(t\log(\X)\right)\,.
\end{equation*}
If $t = \frac{1}{2}$, the matrix $\X^{\frac{1}{2}}$ is the square root of $\X$\,.

\end{definition}

\noindent We finish this section with a lemma\,.

\begin{lemma}

For all $g \in \GL(n)$ and $\P \in \mathscr{P}$, we have $g\P g^{*} \in \mathscr{P}$. Moreover, the corresponding action of $\GL(n)$ on $\mathscr{P}$ is transitive\,.

\label{LemmaTransitiveAction}

\end{lemma}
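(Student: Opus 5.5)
For the first assertion, I will take $g \in \GL(n)$ and $\P \in \mathscr{P}$ and check the two defining conditions of $\mathscr{P}$ directly.

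\textbf{Hermitian.} By Remark~\ref{RemarkPropertiesMatrices}, $(\A\B)^{*} = \B^{*}\A^{*}$ and $(\A^{*})^{*} = \A$ hold for quaternionic matrices. Hence
\begin{equation*}
(g\P g^{*})^{*} = g\P^{*}g^{*} = g\P g^{*}\,.
\end{equation*}

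\textbf{Positivity.} For any non-zero $v \in \mathbb{H}^{n}$ we have
\begin{equation*}
\langle g\P g^{*}v\,, v\rangle = (g^{*}v)^{*}\P(g^{*}v) = \langle \P g^{*}v\,, g^{*}v\rangle\,.
\end{equation*}
Since $g^{*}$ is invertible with inverse $(g^{-1})^{*}$ (again Remark~\ref{RemarkPropertiesMatrices}), $g^{*}v \neq 0$, so this quantity is $> 0$. Thus $g\P g^{*} \in \mathscr{P}$, and $g \cdot \P = g\P g^{*}$ is an action since $(gh)^{*} = h^{*}g^{*}$.

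For transitivity, it suffices to show every $\P \in \mathscr{P}$ lies in the orbit of $\Id_{n}$, i.e.\ $\P = hh^{*}$ for some $h \in \GL(n)$. Given two elements $\P_{1} = h_{1}h_{1}^{*}$ and $\P_{2} = h_{2}h_{2}^{*}$, the element $g = h_{2}h_{1}^{-1}$ then sends $\P_{1}$ to $\P_{2}$.

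To write $\P = hh^{*}$, I will use the quaternionic spectral theorem (Theorem~\ref{SpectralTheorem}): $\P = \U\diag(\mu_{1}\,, \ldots\,, \mu_{n})\U^{*}$ with $\U \in \Sp(n)$ and each $\mu_{i}$ real. Each $\mu_{i} > 0$, because with $e_{i}$ the $i$-th standard basis vector,
\begin{equation*}
\mu_{i} = \langle \P\,\U e_{i}\,, \U e_{i}\rangle\,,
\end{equation*}
using $\U^{*}\U = \Id_{n}$ and $\U e_{i} \neq 0$. Now set
\begin{equation*}
h = \U\diag(\sqrt{\mu_{1}}\,, \ldots\,, \sqrt{\mu_{n}})\,.
\end{equation*}
The diagonal factor is real, so it equals its own adjoint and commutes with real scalars; hence $hh^{*} = \P$. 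Also $h$ is invertible, being a product of invertible matrices, so $h \in \GL(n)$. Equivalently, one can take $h = \P^{\frac{1}{2}}$, which Theorem~\ref{ExponentialTheorem} guarantees exists in $\mathscr{P}$.

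The only point requiring care is the noncommutativity of $\mathbb{H}$. All manipulations must use $(\A\B)^{*} = \B^{*}\A^{*}$ and never transpose identities such as $(\A\B)^{t} = \B^{t}\A^{t}$, which fail over $\mathbb{H}$. The scalars $\sqrt{\mu_{i}}$ are real, so they lie in the centre of $\mathbb{H}$, and no ordering issues arise; I expect no real obstacle beyond this bookkeeping.
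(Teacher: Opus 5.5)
Your proposal is correct and follows essentially the same route as the paper: the Hermitian check via $(\A\B)^{*} = \B^{*}\A^{*}$, positivity via $(g^{*}x)^{*}\P(g^{*}x) > 0$, and transitivity by writing $\P = \U\D\U^{*}$ with the quaternionic spectral theorem and taking $g = \U\D^{\frac{1}{2}}$ so that $gg^{*} = \P$. Your extra justifications fill in small steps the paper only asserts: that each $\mu_{i} = \langle \P\U e_{i}\,, \U e_{i}\rangle > 0$, that $g^{*}x \neq 0$, and that reaching every point from the orbit of $\Id_{n}$ gives transitivity.
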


\begin{proof}

Using Remark \ref{RemarkPropertiesMatrices}, it follows that for all $g \in \GL(n)$ and $\P \in \mathscr{P}$ (i.e. $\P = \P^{*}$ and $\P > 0$), we get
\begin{equation*}
\left(g\P g^{*}\right)^* = \left(g^{*}\right)^{*}\P^{*}g^{*} = g\P g^{*}\,,
\end{equation*}
i.e. $g\P g^{*} \in \mathfrak{p}$. Moreover, using that $\P > 0$, we get that for all non-zero $x \in \mathbb{H}^{n}$
\begin{equation*}
x^{*}\left(g\P g^{*}\right)x = \left(x^{*}g\right)\P\left(g^{*}x\right) = \left(g^{*}x\right)^{*}\P\left(g^{*}x\right) > 0\,,
\end{equation*}
i.e. $g\P g^{*} \in \mathscr{P}$\,.

\noindent To prove that the corresponding action $\GL(n) \curvearrowright \mathscr{P}$ is transitive, it is enough to prove that every $\P \in \mathscr{P}$ is in the orbit $\mathscr{O}_{\Id_{n}}$ of the identity matrix $\Id_{n} \in \mathscr{P}$. Let $\P \in \mathscr{P}$. Using Theorem \ref{SpectralTheorem}, there exists a matrix $\U \in \Sp(n)$ and a diagonal matrix $\D = \diag\left(\lambda_{1}\,, \dots\,, \lambda_{n}\right) \in \Mat(n)$ with $\lambda_{i} > 0$ such that
$\P = \U\D\U^{*}$. Define $\D^{\frac{1}{2}} := \diag\left(\sqrt{\lambda_{1}}\,, \dots\,, \sqrt{\lambda_{n}}\right)$ and let $g := \U\D^{\frac{1}{2}} \in \GL(n)$. Then
\begin{equation*}
gg^{*} = \U\D^{\frac{1}{2}}\left(\U\D^{\frac{1}{2}}\right)^{*} = \U\D^{\frac{1}{2}} \left(\D^{\frac{1}{2}}\right)^{*}\U^{*} = \U\D^{\frac{1}{2}}\D^{\frac{1}{2}}\U^{*} = \U\D\U^{*} = \P\,,
\end{equation*}
i.e. $\P \in \mathscr{O}_{\Id_{n}}$, and the lemma follows\,.

\end{proof}

\begin{remark}

The cone $\mathscr{P}$ is a Riemannian symmetric space of non-compact type, i.e.
\begin{equation*}
\mathscr{P} \cong \GL(n) / \Sp(n)\,.
\end{equation*}
A detailed description of such spaces can be found in \cite{HELGASON}\,.

\end{remark}

\subsection{A Riemannian form on $\mathscr{P}$}

For a matrix $\A \in \Mat(n)$, we denote by $\tr_{\mathbb{H}}$ the standard trace of $\A$, i.e.
\begin{equation*}
\tr_{\mathbb{H}}(\A) = \sum\limits_{i = 1}^{n}a_{i\,, i}\,.
\end{equation*}

\begin{remark}

Using that $\mathbb{H}$ is not commutative, it follows that in general
\begin{equation*}
\tr_{\mathbb{H}}(\A\B) \neq \tr_{\mathbb{H}}(\B\A)\,,
\end{equation*}
for $\A,\B \in \Mat(n)$\,.

\end{remark}

\noindent One way to fix it is to use the reduced trace instead of the standard trace. In Equation \eqref{ReducedTrace}, we define the reduced trace $\trd$ on $\mathbb{H}$. The reduced trace can be extended to $\Mat(n)$ by
\begin{equation*}
\trd(\A) := \trd(\tr_{\mathbb{H}}(\A)) = \sum\limits_{i = 1}^{n} \trd(a_{i, i})\,, \qquad \left(\A \in \Mat(n)\right)\,.
\end{equation*}

\begin{lemma}

For all $\A\,, \B \in \Mat(n)$, we have
\begin{equation*}
\trd(\A\B) = \trd(\B\A)\,.
\end{equation*}

\label{LemmaTrdMatrices}

\end{lemma}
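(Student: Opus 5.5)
The plan is to reduce to the scalar identity $\trd(pq)=\trd(qp)$ of Lemma \ref{LemmaTrdOnH}, using bilinearity of matrix multiplication and additivity of $\trd$.

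First, write $\A = (a_{i,j})$ and $\B = (b_{i,j})$ with entries in $\mathbb{H}$. The product has diagonal entries $(\A\B)_{i,i} = \sum_{k=1}^{n} a_{i,k}b_{k,i}$. Since $\trd$ on $\mathbb{H}$ is $\mathbb{R}$-linear (it equals $\frac{1}{2}(q+\overline{q})$ and $\iota$ is $\mathbb{R}$-linear), the definition $\trd(\A)=\sum_{i}\trd(a_{i,i})$ gives
\begin{equation*}
\trd(\A\B) = \sum_{i=1}^{n}\sum_{k=1}^{n}\trd\left(a_{i,k}b_{k,i}\right)\,.
\end{equation*}
The same computation yields
\begin{equation*}
\trd(\B\A) = \sum_{k=1}^{n}\sum_{i=1}^{n}\trd\left(b_{k,i}a_{i,k}\right)\,.
\end{equation*}

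Second, apply Lemma \ref{LemmaTrdOnH} termwise: $\trd(a_{i,k}b_{k,i}) = \trd(b_{k,i}a_{i,k})$ for every pair $(i,k)$. After exchanging the finite double sums, the two expressions coincide, which proves the lemma.

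No step is a real obstacle. The only point needing care is that the noncommutativity of $\mathbb{H}$ keeps the order of factors inside each term, so the naive $\tr_{\mathbb{H}}$ identity fails. The swap must therefore occur only after applying $\trd$, and this is exactly what Lemma \ref{LemmaTrdOnH} provides. I would also note explicitly that $\trd$ is additive, since it is a real linear functional, so that it can be pushed inside the sums.
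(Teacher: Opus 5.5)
Your proposal is correct and follows essentially the same route as the paper: expand the diagonal entries of $\A\B$ and $\B\A$, use additivity of $\trd$ to push it inside the double sums, apply Lemma~\ref{LemmaTrdOnH} termwise to get $\trd(a_{i,k}b_{k,i})=\trd(b_{k,i}a_{i,k})$, and exchange the finite sums. Your explicit remark that $\trd$ is $\mathbb{R}$-linear, which justifies pushing it inside the sums, is a small point the paper uses without comment.
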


\begin{proof}

Write $\A = \left(a_{i,j}\right)_{i,j}$ and $\B = \left(b_{i,j}\right)_{i,j}$. By definition of matrix multiplication, the diagonal entries of $\A\B$ are
\begin{equation*}
(\A\B)_{i,i} = \sum\limits_{j = 1}^{n} a_{i,j}b_{j,i} \qquad \left(1 \leq i \leq n\right)\,.
\end{equation*}
Using additivity of the reduced trace $\trd$, we get
\begin{eqnarray*}
\trd(\A\B) & = & \sum\limits_{i=1}^{n} \trd\big((\A\B)_{i,i}\big) = \sum\limits_{i=1}^{n} \trd\left(\sum\limits_{j=1}^{n} a_{i,j}b_{j,i}\right) \\
& = & \sum\limits_{i=1}^{n} \sum\limits_{j=1}^{n} \trd(a_{i,j}b_{j,i})\,.
\end{eqnarray*}
Now, using Lemma \ref{LemmaTrdOnH}, we can swap the factors inside $\trd$:
\begin{equation*}
\trd(a_{i,j}b_{j,i})=\trd(b_{j,i}a_{i,j})\,,
\end{equation*}
so
\begin{equation*}
\trd(\A\B) = \sum\limits_{i=1}^{n} \sum\limits_{j=1}^{n} \trd(b_{j,i}a_{i,j})\,.
\end{equation*}
Similarly, we get
\begin{equation*}
(\B\A)_{j,j} = \sum\limits_{i = 1}^{n} b_{j,i}a_{i,j} \qquad \left(1 \leq j \leq n\right)\,,
\end{equation*}
so that
\begin{equation*}
\trd(\B\A) =\sum\limits_{j=1}^{n} \trd\big((\B\A)_{j,j}\big) = \sum\limits_{j=1}^{n} \sum\limits_{i=1}^{n} \trd(b_{j,i}a_{i,j})\,.
\end{equation*}
Finally, $\trd(\A\B)=\trd(\B\A)$\,.

\end{proof}

\noindent For every $\A \in \mathscr{P}$, we identify the tangent space as
\begin{equation*}
\T_{\A}(\mathscr{P}) \cong \mathfrak{p}\,.
\end{equation*}
At the identity $\Id_{n} \in \mathscr{P}$, we define a bilinear form $\omega := \omega_{\Id_{n}}$ on $\T_{\Id_{n}}(\mathscr{P}) \cong \mathfrak{p}$ by
\begin{equation}
\label{MetricAtI}
\omega(\X\,, \Y):= \trd(\X\Y)\,, \qquad \qquad \left(\X\,, \Y \in \mathfrak{p}\right)\,.
\end{equation}
The form is real-valued, symmetric (see Lemma \ref{LemmaTrdMatrices}), and positive. Indeed, for all non-zero $\X \in \mathfrak{p}$, it follows from Theorem \ref{SpectralTheorem} that
\begin{equation*}
\omega(\X\,, \X) = \trd(\X^{2}) = \sum\limits_{j=1}^{n} \lambda^{2}_{j} > 0\,,
\end{equation*}
where $\Spec(\X) = \left\{\lambda_{1}\,, \ldots\,, \lambda_{n}\right\} \subseteq \mathbb{R}$ are the eigenvalues of $\X$. Moreover, for all $g \in \Sp(n)$ and $\X\,, \Y \in \mathfrak{p}$, we have
\begin{equation*}
\omega(g\X g^{-1}\,, g\Y g^{-1}) = \trd\left((g\X g^{-1})(g\Y g^{-1})\right) = \trd(g\X\Y g^{-1}) = \trd(\X\Y) = \omega(\X\,, \Y)\,,
\end{equation*}
i.e. the form $\omega := \omega_{\Id_{n}}$ is $\Sp(n)$-invariant\,.

\noindent We now use the transitive action of $\GL(n)$ on $\mathscr{P}$ (see Lemma \ref{LemmaTransitiveAction}) to define a Riemannian metric on $\mathscr{P}$. For all $g \in \GL(n)$, we denote by $\Phi_{g}: \mathscr{P} \to \mathscr{P}$ the map given by
\begin{equation*}
\Phi_{g}(\X) = g\X g^{*}\,, \qquad \left(\X \in \mathscr{P}\right)\,.
\end{equation*}

\noindent Let $g \in \GL(n)$ and $\A \in \mathscr{P}$. For $\X \in \T_{\A}(\mathscr{P}) \cong \mathfrak{p}$, we consider the curve 
\begin{equation*}
\gamma^{\X}_{\A}(t) = \A + t\X\,.
\end{equation*}
Then
\begin{equation*}
\left(d\Phi_{g}\right)_{\A}(\X) =\left.\frac{d}{dt}\right|_{t=0} \Phi_{g}(\gamma^{\X}_{\A}(t)) =\left.\frac{d}{dt}\right|_{t=0} g\left(\A+t\X\right)g^{*} = g\X g^{*}\,.
\end{equation*}
Then
\begin{equation}
\label{DerivativePhiG}
\left(d\Phi_{g}\right)_{\A}: \T_{\A}(\mathscr{P}) \mapsto \T_{\Phi_{g}(\A)}(\mathscr{P})\,, \qquad \X \mapsto g\X g^{*}\,.
\end{equation}
Since $\A = \Phi_{\A^{\frac{1}{2}}}(\Id_{n})$, we use $\A^{-\frac{1}{2}}$ to pull tangent vectors at $\A$ back to the identity and we get
\begin{equation*}
\omega(\X\,, \Y)_{\A} := \omega\left((d\Phi_{\A^{-\frac{1}{2}}})_{\A}(\X)\,, (d\Phi_{\A^{-\frac{1}{2}}})_{\A}(\Y)\right)_{\Id_{n}}\,.
\end{equation*}
Using the definition of the metric at the identity (see Equation \eqref{MetricAtI}), this gives
\begin{equation*}
\omega(\X\,, \Y)_{\A} = \trd\Big((\A^{-\frac{1}{2}}\X\A^{-\frac{1}{2}})(\A^{-\frac{1}{2}}\Y\A^{-\frac{1}{2}})\Big)\,.
\end{equation*}
Regrouping the factors under the trace yields to
\begin{equation}
\omega(\X\,, \Y)_{\A} = \trd\big(\A^{-1}\X\A^{-1}\Y\big)\,.
\label{FormOmegaP}
\end{equation}

\begin{lemma}

The family of positive symmetric bilinear forms $\left(\omega_{\P}\right)_{\P \in \mathscr{P}}$ defines a Riemannian metric on $\mathscr{P}$\,.

\end{lemma}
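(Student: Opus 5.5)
To prove that $\left(\omega_{\P}\right)_{\P \in \mathscr{P}}$ is a Riemannian metric, we must check three things: each $\omega_{\P}$ is a positive definite symmetric real bilinear form on $\T_{\P}(\mathscr{P}) \cong \mathfrak{p}$, and $\P \mapsto \omega_{\P}$ depends smoothly on $\P$. First we note that $\mathscr{P}$ is an open subset of the real vector space $\mathfrak{p}$. Positivity of $x^{*}\P x$ on the unit sphere of $\mathbb{H}^{n}$ is preserved under small perturbations by compactness. Hence $\mathscr{P}$ is a smooth manifold, and the identification $\T_{\P}(\mathscr{P}) \cong \mathfrak{p}$ is canonical.

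\textbf{Bilinearity and symmetry.} $\mathbb{R}$-bilinearity of $(\X\,,\Y) \mapsto \trd(\P^{-1}\X\P^{-1}\Y)$ is clear, since $\trd$ is $\mathbb{R}$-linear and real scalars are central in $\mathbb{H}$. The value is real by construction of $\trd$. For symmetry, apply Lemma \ref{LemmaTrdMatrices} with $\A = \P^{-1}\X$ and $\B = \P^{-1}\Y$:
\begin{equation*}
\trd(\P^{-1}\X\P^{-1}\Y) = \trd(\P^{-1}\Y\P^{-1}\X)\,.
\end{equation*}

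\textbf{Positivity.} Use the equivalent expression derived just before the statement,
\begin{equation*}
\omega_{\P}(\X\,,\X) = \trd\left(\Z^{2}\right)\,, \qquad \Z := \P^{-\frac{1}{2}}\X\P^{-\frac{1}{2}}\,.
\end{equation*}
Since $\P^{-\frac{1}{2}} \in \mathscr{P}$ is Hermitian, $\Z \in \mathfrak{p}$ by Remark \ref{RemarkPropertiesMatrices}. Moreover $\Z \neq 0$ whenever $\X \neq 0$, because $\P^{-\frac{1}{2}}$ is invertible. Positivity of $\omega$ at $\Id_{n}$ (via Theorem \ref{SpectralTheorem}) then gives $\trd(\Z^{2}) > 0$.

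A more robust route, which I would include, writes $\Z = \U\Lambda\U^{*}$. Cyclicity of $\trd$ (Lemma \ref{LemmaTrdMatrices}) together with $\U^{*}\U = \Id_{n}$ gives $\trd(\Z^{2}) = \trd(\Lambda^{2}) = \sum_{j}\lambda_{j}^{2}$. Alternatively, directly, $\trd(\Z^{2}) = \trd(\Z\Z^{*}) = \sum_{i,j}\left|z_{i,j}\right|^{2} > 0$, because $\trd(q\overline{q}) = \left|q\right|^{2}$. This last argument avoids the spectral theorem entirely, and I would use it as the main line.

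\textbf{Smoothness.} This is the step needing the most care, though it is not deep. Inversion $\P \mapsto \P^{-1}$ is smooth on the invertible quaternionic matrices. One way to see it is through the complex embedding $\Psi$ extended entrywise to $\Mat(n) \hookrightarrow \Mat(2n\,,\mathbb{C})$, where $\P^{-1}$ is given by rational functions of the entries (Cramer's rule) with nonvanishing denominator. Alternatively, use a Neumann series locally. Then for constant vector fields $\X\,,\Y$, the map
\begin{equation*}
\P \mapsto \trd(\P^{-1}\X\P^{-1}\Y)
\end{equation*}
is a composition of smooth maps: inversion, matrix multiplication, and the linear map $\trd$. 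Since every smooth vector field on the open set $\mathscr{P}$ is a smooth $\mathfrak{p}$-valued function, $\omega(\X\,,\Y)$ is smooth for all smooth vector fields $\X\,,\Y$. This concludes that $\omega$ is a Riemannian metric.
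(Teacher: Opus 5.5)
Your proof is correct. The paper gives no proof of this lemma and relies on the construction just before it. There, $\omega_{\Id_{n}}$ is shown to be real-valued, symmetric (Lemma \ref{LemmaTrdMatrices}) and positive (via Theorem \ref{SpectralTheorem}, $\trd(\X^{2})=\sum_{j}\lambda_{j}^{2}$), and $\omega_{\A}$ is then defined by transporting tangent vectors back to $\Id_{n}$ along $(d\Phi_{\A^{-\frac{1}{2}}})_{\A}$, which gives Equation \eqref{FormOmegaP}. Pointwise symmetry and positivity are thus inherited from the identity, and smooth dependence on the base point is left implicit in the explicit formula.

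Your argument uses the same reduction, since $\Z=\P^{-\frac{1}{2}}\X\P^{-\frac{1}{2}}$ is exactly the transported vector, and it adds two things worth keeping:
\begin{enumerate}
\item The identity $\trd(\Z\Z^{*})=\sum_{i,j}|z_{i,j}|^{2}$ proves positivity without the quaternionic spectral theorem.
\item Your checks that $\mathscr{P}$ is open in $\mathfrak{p}$ (which justifies $\T_{\P}(\mathscr{P})\cong\mathfrak{p}$) and that $\P\mapsto\P^{-1}$ is smooth (via $\Psi$ or a Neumann series) supply the smoothness of $\P\mapsto\omega_{\P}$. The paper never writes this down, and it is exactly what separates a Riemannian metric from a family of pointwise inner products.
\end{enumerate}
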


\subsection{Geodesics on $\mathscr{P}$}

\label{SectionGeodesicsOnP}

The goal of this section is to prove the following theorem\,.

\begin{theo}

Let $\A\,, \B \in \mathscr{P}$. Then the unique geodesic (with respect to the Riemannian metric $\omega$) joining $\A$ and $\B$ in $\mathscr{P}$ is given by
\begin{equation}
\gamma(t) = \A^{\frac{1}{2}}\left(\A^{-\frac{1}{2}}\B\A^{-\frac{1}{2}}\right)^{t}\A^{\frac{1}{2}}\,.
\label{GeodesicInPH}
\end{equation}

\label{MainTheorem}

\end{theo}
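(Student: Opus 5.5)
I would prove Theorem \ref{MainTheorem} by reducing to the identity, using the $\GL(n)$-invariance of $\omega$, and then identifying the geodesics through $\Id_{n}$ explicitly.

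\emph{Step 1: isometries.} For each $g \in \GL(n)$, the map $\Phi_{g}$ is an isometry of $(\mathscr{P}\,, \omega)$. By \eqref{DerivativePhiG} and \eqref{FormOmegaP},
\begin{equation*}
\omega(g\X g^{*}\,, g\Y g^{*})_{g\A g^{*}} = \trd\left((g^{*})^{-1}\A^{-1}\X\A^{-1}\Y g^{*}\right) = \trd(\A^{-1}\X\A^{-1}\Y)\,,
\end{equation*}
where the last equality uses Lemma \ref{LemmaTrdMatrices}. Lemma \ref{LemmaRiemannianManifolds} then shows that $\Phi_{g}$ maps geodesics to geodesics. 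Take $g = \A^{-\frac{1}{2}}$. It sends $\A$ to $\Id_{n}$ and $\B$ to $\C := \A^{-\frac{1}{2}}\B\A^{-\frac{1}{2}}$. Since $\Phi_{\A^{\frac{1}{2}}}(\C^{t}) = \gamma(t)$, it is enough to show that $t \mapsto \C^{t} = \exp(t\log \C)$ is the unique geodesic from $\Id_{n}$ to $\C$.

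\emph{Step 2: the curve $\exp(t\L)$ is a geodesic.} Fix $\L \in \mathfrak{p}$. I would derive the Levi-Civita connection from the Koszul formula, as in the remark preceding Theorem \ref{TheoremGeodesicsPJ}. This gives the geodesic equation \eqref{ODEP}, $\ddot{\gamma} = \dot{\gamma}\gamma^{-1}\dot{\gamma}$. The derivation only uses that $\trd$ is real, $\mathbb{R}$-bilinear and cyclic (Lemma \ref{LemmaTrdMatrices}), together with $\frac{d}{dt}\gamma^{-1} = -\gamma^{-1}\dot{\gamma}\gamma^{-1}$, so the real/complex argument carries over verbatim. Checking that $\exp(t\L)$ satisfies \eqref{ODEP} is then the same computation as in that remark.

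Alternatively, I could argue that $\exp(t\L)$ is the orbit of a one-parameter subgroup through a symmetric-space base point. The geodesic symmetry $\X \mapsto \X^{-1}$ is an isometry of $\omega$ fixing $\Id_{n}$, and this forces such orbits to be geodesics.

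\emph{Step 3: uniqueness.} This is the main obstacle, since standard ODE theory gives only local uniqueness. I would prove that $\exp: \mathfrak{p} \to \mathscr{P}$ is the Riemannian exponential at $\Id_{n}$, and that it is a bijection by Theorem \ref{ExponentialTheorem}. Every geodesic starting at $\Id_{n}$ with initial velocity $\L$ coincides, by ODE uniqueness, with $\exp(t\L)$. So a geodesic from $\Id_{n}$ reaching $\C$ at time $1$ must satisfy $\exp(\L) = \C$, and injectivity gives $\L = \log \C$. This yields uniqueness among geodesics parametrized on $[0\,, 1]$, and completeness comes for free because $\exp(t\L)$ is defined for all $t$.

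If one wants uniqueness in the stronger metric sense, the route I would try first is the exponential metric increasing property
\begin{equation*}
\|\log\P - \log\Q\|_{2} \leq \delta(\P\,, \Q)\,.
\end{equation*}
It can be transferred from the complex case through $\Psi$, since $\trd$ equals $\tfrac{1}{2}\tr\circ\Psi$. This shows that $\mathscr{P}$ is a Hadamard manifold, which gives unique geodesics.

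Finally, we apply $\Phi_{\A^{\frac{1}{2}}}$ to the geodesic $\C^{t}$. This yields \eqref{GeodesicInPH}, with $\gamma(0) = \A$ and $\gamma(1) = \A^{\frac{1}{2}}\C\A^{\frac{1}{2}} = \B$.
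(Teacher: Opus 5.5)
Your proposal is correct, but it takes a different route from the paper. The paper argues extrinsically. It embeds $\mathscr{P}$ into the complex cone $\mathscr{S}\subset\Mat(2n\,,\mathbb{C})$ via $\Psi$. It then checks in Lemma~\ref{LemmaBeforeProof} that $\Psi(\mathscr{P})$ is stable under inversion, real powers and products, so the known complex geodesic $\widetilde{\A}^{\frac{1}{2}}(\widetilde{\A}^{-\frac{1}{2}}\widetilde{\B}\widetilde{\A}^{-\frac{1}{2}})^{t}\widetilde{\A}^{\frac{1}{2}}$ stays in $\Psi(\mathscr{P})$, and pulls this curve back through $\Psi$. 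You instead work intrinsically on $\mathscr{P}$. Congruence invariance reduces everything to $\Id_{n}$. The Koszul computation yields $\ddot{\gamma}=\dot{\gamma}\gamma^{-1}\dot{\gamma}$, and as you say it only needs $\trd$ to be real-valued, $\mathbb{R}$-bilinear and cyclic (Lemma~\ref{LemmaTrdMatrices}). Identifying the Riemannian exponential at $\Id_{n}$ with $\exp|_{\mathfrak{p}}$, together with the injectivity in Theorem~\ref{ExponentialTheorem}, then gives global uniqueness. The paper's route reuses Bhatia's complex theory without redoing any connection computation over $\mathbb{H}$. Yours is self-contained and handles uniqueness explicitly, which the paper leaves implicit. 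Knowing that the ambient geodesic of $\mathscr{S}$ lies in $\Psi(\mathscr{P})$ shows only that it is \emph{a} geodesic of the submanifold. Excluding other geodesics of $\Psi(\mathscr{P})$ requires the true but unstated fact that $\Psi(\mathscr{P})$ is totally geodesic in $\mathscr{S}$; this holds, for instance, because it is the fixed-point set of the isometry $\X\mapsto\J_{2n}\overline{\X}\J_{2n}^{-1}$. Also, since $\trd=\frac{1}{2}\tr\circ\Psi$, the paper's $\Psi$ is an isometry only up to the constant $\frac{1}{2}$, which does not affect geodesics. Your two asides, the symmetric-space argument and the EMI/Hadamard route, are not needed, since Step 3 already settles uniqueness of the geodesic on $[0\,,1]$.
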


\noindent Before proving Theorem \ref{MainTheorem}, we will have to prove a few intermediate results. Let $\Psi: \mathbb{H} \to \Mat(2\,, \mathbb{C})$ be the map given in Equation \eqref{EmbeddingOne}. Using that $\mathbb{H} = \mathbb{C} \oplus \mathbb{C}\mathbf{j}$, we obtain that every matrix $\X \in \Mat(n)$ can be written as $\X = \A + \B\mathbf{j}$, with $\A\,, \B \in \Mat(n\,, \mathbb{C})$. The map $\Psi$ can be extended to a map $\Psi: \Mat(n) \mapsto \Mat(2n\,, \mathbb{C})$ as
\begin{equation}
\Psi(\A + \B\mathbf{j}) = \begin{bmatrix} \A & \B \\ -\overline{\B} & \overline{\A} \end{bmatrix}\,, \qquad \qquad \left(\A\,, \B \in \Mat(n\,, \mathbb{C})\right)\,.
\label{EquationPsiMatrices}
\end{equation}
For all $\X\,, \Y \in \Mat(n)$, we have $\Psi(\X\Y) = \Psi(\X)\Psi(\Y)$. Moreover, if $\X$ is invertible, then $\Psi(\X)$ is invertible and we have $\Psi(\X^{-1}) = \Psi(\X)^{-1}$\,.

\noindent For a matrix $\X \in \Mat(2n\,, \mathbb{C})$, we denote by $\X^{*}$ the matrix $\X^{*} = \overline{\X}^{t}$. In particular, we have $\Psi(\X^{*}) = \Psi(\X)^{*}$.

\begin{notation}

We denote by $\J_{2n}$ the matrix of $\Mat(2n\,, \mathbb{C})$ given by
\begin{equation*}
\J_{2n} := \begin{bmatrix} 0 & \Id_{n} \\ -\Id_{n} & 0 \end{bmatrix}\,.
\end{equation*}
In particular, $\J^{2}_{2n} = -\Id_{2n}$\,.

\end{notation}

\begin{lemma}

We get $\Psi(\Mat(n)) = \left\{\M \in \Mat(2n\,, \mathbb{C})\,, \M\J_{2n} = \J_{2n}\overline{\M}\right\}$\,.

\end{lemma}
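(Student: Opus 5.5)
The plan is to prove the two inclusions separately, working throughout with the block decomposition $\M = \begin{bmatrix} \M_{1} & \M_{2} \\ \M_{3} & \M_{4}\end{bmatrix}$, $\M_{i} \in \Mat(n\,, \mathbb{C})$.

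First we compute both sides of the defining relation:
\begin{equation*}
\M\J_{2n} = \begin{bmatrix} -\M_{2} & \M_{1} \\ -\M_{4} & \M_{3}\end{bmatrix}\,, \qquad \J_{2n}\overline{\M} = \begin{bmatrix} \overline{\M_{3}} & \overline{\M_{4}} \\ -\overline{\M_{1}} & -\overline{\M_{2}}\end{bmatrix}\,.
\end{equation*}
Comparing blocks, the condition $\M\J_{2n} = \J_{2n}\overline{\M}$ is equivalent to the system
\begin{equation*}
\M_{3} = -\overline{\M_{2}}\,, \qquad \M_{4} = \overline{\M_{1}}\,.
\end{equation*}
The remaining two block equations are simply the complex conjugates of these, so they add nothing new.

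For the inclusion $\subseteq$, take $\X = \A + \B\mathbf{j}$ with $\A\,, \B \in \Mat(n\,, \mathbb{C})$. By Equation \eqref{EquationPsiMatrices}, $\Psi(\X)$ has blocks $\M_{1} = \A$, $\M_{2} = \B$, $\M_{3} = -\overline{\B}$, $\M_{4} = \overline{\A}$, which satisfy the system above. For the reverse inclusion, suppose $\M$ satisfies the relation and set $\A := \M_{1}$ and $\B := \M_{2}$. The system then forces $\M = \Psi(\A + \B\mathbf{j})$, using that every element of $\Mat(n)$ is uniquely of the form $\A + \B\mathbf{j}$ (entrywise, from $\mathbb{H} = \mathbb{C}\oplus\mathbb{C}\mathbf{j}$).

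There is no real obstacle here. The only points needing care are the sign conventions in $\J_{2n}$ and checking that the conjugated block equations are redundant rather than imposing extra constraints.
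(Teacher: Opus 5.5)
Your proof is correct and is essentially the paper's argument. The paper states this lemma without a separate proof, but the same block computation of $\M\J_{2n}$ and $\J_{2n}\overline{\M}$, reducing the relation to $\M_{3} = -\overline{\M_{2}}$ and $\M_{4} = \overline{\M_{1}}$ and identifying $\M = \Psi(\M_{1} + \M_{2}\mathbf{j})$, appears in its proof of the subsequent description of $\Psi(\mathscr{P})$; your remark that the other two block equations are redundant conjugates is a detail the paper leaves implicit.
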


\noindent In the following lemmas, we will give an explicit description of $\Psi(\Sp(n))$ and $\Psi(\mathscr{P})$\,.

\begin{remark}

Let $\langle\cdot\,, \cdot\rangle$ be the hermitian form on the right-$\mathbb{H}$-module $\V = \mathbb{H}^{n}$ given in Section \ref{SectionConeHermitianMatrices}. We denote by $\V_{\mathbb{C}}$ the vector space $\V$ viewed as a complex vector space. In particular, we have $\dim_{\mathbb{C}}(\V_{\mathbb{C}}) = 2n$. We denote by $\B$ the form on $\V_{\mathbb{C}}$ given by
\begin{equation*}
\B(u\,, v) = \pr_{\mathbb{C}}(\langle u\,, v\rangle)\,, \qquad \qquad \left(u\,, v \in \V_{\mathbb{C}}\right)\,,
\end{equation*}
where $\pr_{\mathbb{C}}: \mathbb{H} \to \mathbb{C}$ is the map given by $\pr_{\mathbb{C}}(a + b\mathbf{j}) = a$\,.

\noindent For all $u\,, v\,, w \in \V_{\mathbb{C}}$ and $\lambda \in \mathbb{C}$, we have
\begin{eqnarray*}
\B(u\,, v + w\lambda) & = & \pr_{\mathbb{C}}(\langle u\,, v + w\lambda\rangle) = \pr_{\mathbb{C}}(\langle u\,, v\rangle + \langle u\,, w\rangle\lambda) \\
& = & \pr_{\mathbb{C}}(\langle u\,, v\rangle) + \pr_{\mathbb{C}}(\langle u\,, w\rangle)\lambda = \B(u\,, v) + \B(u\,, w)\lambda\,,
\end{eqnarray*}
and 
\begin{eqnarray*}
\B(u + v\lambda\,, w) & = & \pr_{\mathbb{C}}(\langle u + v\lambda\,, w\rangle) = \pr_{\mathbb{C}}(\langle u\,, w\rangle + \overline{\lambda}\langle v\,, w\rangle) \\
& = & \pr_{\mathbb{C}}(\langle u\,, w\rangle) + \overline{\lambda}\pr_{\mathbb{C}}(\langle v\,, w\rangle) = \B(u\,, w) + \overline{\lambda}\B(v\,, w)\,,
\end{eqnarray*}
i.e. $\B$ is a sesquilinear form on $\V_{\mathbb{C}}$. Moreover, using that $\pr_{\mathbb{C}}(\iota(q)) = \overline{\pr_{\mathbb{C}}(q)}$ for all $q \in \mathbb{H}$, we get
\begin{equation*}
\B(u\,, v) = \pr_{\mathbb{C}}(\langle u\,, v\rangle) = \pr_{\mathbb{C}}\left(\iota(\langle v\,, u\rangle)\right) = \pr_{\mathbb{C}}\left(\overline{\langle v\,, u\rangle}\right) = \overline{\B(v\,, u)}\,,
\end{equation*}
so $\B$ is Hermitian. Finally, using that $\langle u\,, u\rangle > 0$ for all $u$-non-zero, it follows that the form $\B$ is positive. We denote by $\U(\V_{\mathbb{C}})$ the subgroup of $\GL(\V_{\mathbb{C}})$ given by
\begin{equation*}
\U(\V_{\mathbb{C}}) := \left\{g \in \GL(\V_{\mathbb{C}})\,, \B(gu\,, gv) = \B(u\,, v)\,, \left(u,, v \in \V_{\mathbb{C}}\right)\right\}\,.
\end{equation*}
Using that the form $\langle\cdot\,, \cdot\rangle$ is $\Sp(n)$-invariant, it follows that $\Psi(\Sp(n))$ is a subgroup of $\U(\V_{\mathbb{C}})$. 

\noindent Similarly, let $\J$ be the map on $\V_{\mathbb{C}}$ given by
\begin{equation*}
\J(v) = v\mathbf{j}\,, \qquad \qquad \left(v \in \V_{\mathbb{C}}\right)\,,
\end{equation*}
and let $\C: \V_{\mathbb{C}} \times \V_{\mathbb{C}} \to \mathbb{C}$ be the form given by 
\begin{equation*}
\C(u\,, v) = \B(\J(u)\,, v)\,, \qquad \qquad \left(u\,, v \in \V_{\mathbb{C}}\right)\,.
\end{equation*}
Using that $z\mathbf{j} = \mathbf{j}\overline{z}$ for all $z \in \mathbb{C}$, it follows that $\J(uz) = \J(u)\overline{z}$. Therefore for all $u\,, v \in \V_{\mathbb{C}}$ and $z \in \mathbb{C}$, we have
\begin{equation*}
\C(uz\,, v) = z\C(u\,, v)\,, \qquad \qquad \C(u, vz)  = \C(u\,, v)z\,,
\end{equation*}
i.e. $\C$ is bilinear. Moreover, we have
\begin{eqnarray*}
\C(u\,, v) & = & \B(\J(u)\,, v) = \overline{\B(v\,, \J(u))} = \overline{\pr_{\mathbb{C}}(\langle v\,, u\mathbf{j}\rangle)} = \overline{\pr_{\mathbb{C}}(\langle v\,, u\rangle\mathbf{j})} \\
& = & \overline{\pr_{\mathbb{C}}(\mathbf{j}\overline{\langle v\,, u\rangle})} = -\overline{\pr_{\mathbb{C}}(\overline{\langle v\mathbf{j}\,, u\rangle})} = -\overline{\pr_{\mathbb{C}}(\overline{\langle \J(v)\,, u\rangle})} \\
& = & -\overline{\overline{\C(v\,, u)}} = -\C(v\,, u)
\end{eqnarray*}
i.e. $\C$ is skew-symmetric. Moreover, using that $\B$ is non-degenerate, it follows that $\C$ is non-degenerate. We denote by $\Sp(\V_{\mathbb{C}})$ the subgroup of $\GL(\V_{\mathbb{C}})$ given by
\begin{equation*}
\Sp(\V_{\mathbb{C}}) := \left\{g \in \GL(\V_{\mathbb{C}})\,, \C(g(u)\,, g(v)) = \C(u\,, v)\,, \left(u\,, v \in \V_{\mathbb{C}}\right)\right\}\,,
\end{equation*}
and $\Psi(\Sp(n))$ is a subgroup of $\Sp(\V_{\mathbb{C}})$\,.

\noindent If $\mathscr{B} = \left\{v_{1}\,, \ldots\,, v_{n}\right\}$ is a right $\mathbb{H}$-basis of $\V = \mathbb{H}^{n}$, then $\mathscr{B}_{\mathbb{C}} = \left\{v_{1}\,, \ldots\,, v_{n}\,, v_{1}\mathbf{j}\,, \ldots\,, v_{n}\mathbf{j}\right\}$ is a complex basis of $\V_{\mathbb{C}}$. Moreover, one can see that
\begin{equation*}
\Mat_{\mathscr{B}_{\mathbb{C}}}(\B) = \Id_{2n}\,, \qquad \qquad \Mat_{\mathscr{B}_{\mathbb{C}}}(\C) = \J_{2n}\,.
\end{equation*}
Therefore,
\begin{equation*}
\U(\V_{\mathbb{C}}) \cong \U(2n\,, \mathbb{C}) = \left\{g \in \GL(2n\,, \mathbb{C})\,, g^{*}g = \Id_{2n}\right\}\,,
\end{equation*}
and
\begin{equation*}
\Sp(\V_{\mathbb{C}}) \cong \Sp(2n\,, \mathbb{C}) = \left\{g \in \GL(2n\,, \mathbb{C})\,, g^{t}\J_{2n}g = \J_{2n}\right\}\,.
\end{equation*}

\label{RemarkSpnH}

\end{remark}

\begin{lemma}

We have $\Psi(\Sp(n)) = \U(2n\,, \mathbb{C}) \cap \Sp(2n\,, \mathbb{C})$\,.

\end{lemma}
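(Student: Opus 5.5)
We prove the two inclusions separately, relying on the description $\Psi(\Mat(n)) = \left\{\M \in \Mat(2n\,, \mathbb{C})\,, \M\J_{2n} = \J_{2n}\overline{\M}\right\}$ from the previous lemma and on the compatibilities $\Psi(\X\Y) = \Psi(\X)\Psi(\Y)$ and $\Psi(\X^{*}) = \Psi(\X)^{*}$.

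For the inclusion $\Psi(\Sp(n)) \subseteq \U(2n\,, \mathbb{C}) \cap \Sp(2n\,, \mathbb{C})$, take $g \in \Sp(n)$, so $g^{*}g = \Id_{n}$. Applying $\Psi$ gives $\Psi(g)^{*}\Psi(g) = \Psi(\Id_{n}) = \Id_{2n}$, hence $\Psi(g) \in \U(2n\,, \mathbb{C})$. For the symplectic condition we use the identity $\M\J_{2n} = \J_{2n}\overline{\M}$ valid for $\M = \Psi(g)$. Taking the complex conjugate and then the transpose, and using $\J_{2n}^{t} = -\J_{2n}$, we obtain $\M^{t}\J_{2n} = \J_{2n}\M^{*}$. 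Therefore
\begin{equation*}
\M^{t}\J_{2n}\M = \J_{2n}\M^{*}\M = \J_{2n}\,,
\end{equation*}
so $\Psi(g) \in \Sp(2n\,, \mathbb{C})$. Remark \ref{RemarkSpnH} gives an alternative, more intrinsic route to the same conclusion, since it shows that $\Psi(\Sp(n))$ preserves both forms $\B$ and $\C$. The explicit matrix computation above is nevertheless cleaner.

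For the reverse inclusion, let $\M \in \U(2n\,, \mathbb{C}) \cap \Sp(2n\,, \mathbb{C})$, so that $\M^{*}\M = \Id_{2n}$ and $\M^{t}\J_{2n}\M = \J_{2n}$. From the unitarity we get $\M^{t} = \overline{\M}^{-1}$. Substituting this into the symplectic relation yields $\J_{2n}\M = \overline{\M}\J_{2n}$. We then multiply on the left and on the right by $\J_{2n}^{-1} = -\J_{2n}$ to obtain $\M\J_{2n} = \J_{2n}\overline{\M}$. By the previous lemma, this means $\M = \Psi(g)$ for some $g \in \Mat(n)$. Finally, injectivity of $\Psi$ together with $\Psi(g^{*}g) = \M^{*}\M = \Id_{2n} = \Psi(\Id_{n})$ gives $g^{*}g = \Id_{n}$, that is, $g \in \Sp(n)$.

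The main obstacle is purely bookkeeping: keeping the signs and the conjugation/transpose operations straight when passing between the relations $\M\J_{2n} = \J_{2n}\overline{\M}$ and $\M^{t}\J_{2n}\M = \J_{2n}$. The key algebraic fact is that, in the presence of unitarity, these two conditions are equivalent. We therefore plan to verify $\J_{2n}^{t} = -\J_{2n} = \J_{2n}^{-1}$ explicitly, and to check directly that $\Psi$ intertwines $*$, both of which follow from the block formula \eqref{EquationPsiMatrices}.
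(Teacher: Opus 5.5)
Your argument is correct, but it takes a different route from the paper's. For $\Psi(\Sp(n)) \subseteq \U(2n\,,\mathbb{C})\cap\Sp(2n\,,\mathbb{C})$ the paper appeals to Remark \ref{RemarkSpnH}: the image preserves both the Hermitian form $\B$ and the skew form $\C$. For the reverse inclusion it never constructs a preimage. Instead it invokes connectedness and compares dimensions, $\dim_{\mathbb{R}}\Lie(\Sp(n)) = n(2n+1) = \dim_{\mathbb{C}}\mathfrak{sp}(2n\,,\mathbb{C})$, where the latter is identified with the real dimension of $\mathfrak{u}(2n)\cap\mathfrak{sp}(2n\,,\mathbb{C})$ via complexification.

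You instead show directly that, for unitary $\M$, the condition $\M^{t}\J_{2n}\M = \J_{2n}$ is equivalent to $\M\J_{2n} = \J_{2n}\overline{\M}$. You then conclude with the block description of $\Psi(\Mat(n))$ (the computation in the proof of \eqref{EquationImageCone}), together with the injectivity and $*$-compatibility of $\Psi$.

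Your route is elementary and self-contained, and it sidesteps several points the paper leaves implicit:
\begin{enumerate}
\item the dimension count needs the intersection $\U(2n\,,\mathbb{C})\cap\Sp(2n\,,\mathbb{C})$ itself to be connected, and connectedness of the two groups separately does not give this;
\item it needs $\Psi(\Sp(n))$ to be a closed, hence open, subgroup of that intersection;
\item the complexification step presupposes that $\mathfrak{u}(2n)\cap\mathfrak{sp}(2n\,,\mathbb{C})$ is a real form of $\mathfrak{sp}(2n\,,\mathbb{C})$.
\end{enumerate}
What the paper's approach offers in exchange is the Lie-theoretic picture: the statement identifies $\Sp(n)$ with a compact real form, a maximal compact subgroup, of $\Sp(2n\,,\mathbb{C})$.

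One small tidy-up in your last step: $\Sp(n)$ is defined inside $\GL(n)$. Note that $\M\M^{*} = \Id_{2n}$ likewise gives $gg^{*} = \Id_{n}$, so $g$ is invertible.
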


\begin{proof}

As explained in Remark \ref{RemarkSpnH}, we have $\Psi(\Sp(n)) \subseteq \U(2n\,, \mathbb{C}) \cap \Sp(2n\,, \mathbb{C})$. To prove the other inclusion, using that the three groups $\Sp(n)\,, \U(2n\,, \mathbb{C})\,$ and $\Sp(2n\,, \mathbb{C})$ are connected, it is enough to prove that 
\begin{equation*}
\dim_{\mathbb{R}}(\Lie(\Sp(n))) = \dim_{\mathbb{R}}(\Lie\left(\U(2n\,, \mathbb{C}) \cap \Sp(2n\,, \mathbb{C})\right))\,.
\end{equation*}
Using that 
\begin{eqnarray*}
\dim_{\mathbb{R}}(\Lie\left(\U(2n\,, \mathbb{C}) \cap \Sp(2n\,, \mathbb{C})\right)) & = & \dim_{\mathbb{C}}\left(\left(\mathfrak{u}(2n\,, \mathbb{C}) \cap \mathfrak{sp}(2n\,, \mathbb{C})\right) \otimes_{\mathbb{R}} \mathbb{C}\right) \\
& = & \dim_{\mathbb{C}}\left(\mathfrak{sp}(2n\,, \mathbb{C})\right)\,,
\end{eqnarray*}
it follows that $\dim_{\mathbb{C}}\left(\mathfrak{sp}(2n\,, \mathbb{C})\right) = \dim_{\mathbb{C}}(\S^{2}(\mathbb{C}^{2n})) = \frac{2n(2n+1)}{2} = n(2n+1)$. Similarly, one can easily see that 
\begin{equation*}
\dim_{\mathbb{R}}(\Lie(\Sp(n))) = 3n + 4\frac{(n-1)n}{2} = 3n + 2n(n-1) = n(2n-2+3) = n(2n+1)\,,
\end{equation*}
and the lemma follows\,.

\end{proof}

\noindent Similarly, denote by $\mathfrak{s}$ the set of Hermitian matrices in $\Mat(2n\,, \mathbb{C})$, i.e.
\begin{equation*}
\mathfrak{s} := \left\{\X \in \Mat(2n\,, \mathbb{C})\,, \X^{*} = \X\right\}\,,
\end{equation*}
and by $\mathscr{S}$ the set of positive matrices in $\mathfrak{s}$\,.

\begin{remark}

The complex cone $\mathscr{S}$ is exactly the one we define in Section \ref{SectionOne}; we changed the notation to avoid any confusion\,.

\end{remark}

\begin{lemma}

We get
\begin{equation}
\Psi(\mathscr{P}) = \left\{\X \in \mathscr{S}\,, \X\J_{2n} = \J_{2n}\overline{\X}\right\}\,.
\label{EquationImageCone}
\end{equation}
In particular, $\Psi(\mathscr{P})$ is a closed submanifold of $\mathscr{S}$\,.

\end{lemma}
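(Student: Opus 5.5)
The statement has two parts: the set-theoretic description \eqref{EquationImageCone} of $\Psi(\mathscr{P})$, and the claim that it is a closed submanifold of $\mathscr{S}$. Both reduce to the previous lemma, $\Psi(\Mat(n)) = \{\M \in \Mat(2n\,,\mathbb{C})\,, \M\J_{2n} = \J_{2n}\overline{\M}\}$, together with the compatibility of $\Psi$ with adjoints.

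For the inclusion $\subseteq$, let $\P \in \mathscr{P}$. Since $\Psi(\P^{*}) = \Psi(\P)^{*}$, the matrix $\Psi(\P)$ is complex Hermitian, and by the previous lemma it satisfies $\Psi(\P)\J_{2n} = \J_{2n}\overline{\Psi(\P)}$. It remains to show that $\Psi(\P)$ is positive. I would use Theorem \ref{SpectralTheorem} to write $\P = \U\diag(\mu_{1}\,,\ldots\,,\mu_{n})\U^{*}$ with $\U \in \Sp(n)$ and $\mu_{i} > 0$. Then multiplicativity of $\Psi$ gives
\begin{equation*}
\Psi(\P) = \Psi(\U)\diag(\mu_{1}\,,\ldots\,,\mu_{n}\,,\mu_{1}\,,\ldots\,,\mu_{n})\Psi(\U)^{*}\,,
\end{equation*}
where $\Psi(\U)$ is unitary by the previous lemma. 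Hence $\Psi(\P) \in \mathscr{S}$. Alternatively, one can note that for $v \in \mathbb{C}^{2n}$ corresponding to $x \in \mathbb{H}^{n}$, the quantity $v^{*}\Psi(\P)v$ equals the form $\B(x\,, \P x) = \pr_{\mathbb{C}}(x^{*}\P x) = x^{*}\P x > 0$ of Remark \ref{RemarkSpnH}.

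For the inclusion $\supseteq$, let $\X \in \mathscr{S}$ with $\X\J_{2n} = \J_{2n}\overline{\X}$. By the previous lemma, $\X = \Psi(\P)$ for some $\P \in \Mat(n)$. Injectivity of $\Psi$ and $\Psi(\P^{*}) = \Psi(\P)^{*} = \X = \Psi(\P)$ give $\P = \P^{*}$. For positivity, I would reuse the identity $x^{*}\P x = v^{*}\Psi(\P)v$, where $v$ is the complex coordinate vector of $x$ in the basis $\mathscr{B}_{\mathbb{C}}$. This identity should follow from the fact that $\B$ has matrix $\Id_{2n}$ in $\mathscr{B}_{\mathbb{C}}$ and that $\Psi(\P)$ is the matrix of $\P$ acting on $\V_{\mathbb{C}}$. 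Since $x^{*}\P x$ is real for Hermitian $\P$, its $\pr_{\mathbb{C}}$ is itself, and positivity transfers.

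The step I expect to need the most care is this identification of $\Psi(\P)$ with the matrix of $\P$ on $\V_{\mathbb{C}}$ in the basis $\mathscr{B}_{\mathbb{C}}$. The left- versus right-module conventions and the rule $z\mathbf{j} = \mathbf{j}\overline{z}$ must produce exactly the block form \eqref{EquationPsiMatrices}, rather than a conjugated or transposed version of it. I would verify this directly on a vector $x = a + b\mathbf{j}$ with $a\,, b \in \mathbb{C}^{n}$, computing $(\A + \B\mathbf{j})(a + b\mathbf{j})$.

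Finally, for the submanifold claim: $\Psi(\Mat(n))$ is a real linear subspace of $\Mat(2n\,,\mathbb{C})$, cut out by the $\mathbb{R}$-linear equation $\M\J_{2n} - \J_{2n}\overline{\M} = 0$. Its intersection with $\mathfrak{s}$ is therefore a linear subspace $L$. Since $\mathscr{S}$ is open in $\mathfrak{s}$, the set $\Psi(\mathscr{P}) = L \cap \mathscr{S}$ is an open subset of a linear subspace, which makes it an embedded submanifold. It is also the zero set in $\mathscr{S}$ of a continuous map, hence relatively closed.
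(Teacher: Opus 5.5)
Your proposal is correct and follows the paper's route: characterize $\Psi(\Mat(n))$ by $\M\J_{2n}=\J_{2n}\overline{\M}$ (the paper proves this inline by a direct block computation, you cite the preceding lemma) and intersect with $\mathscr{S}$. You also supply what the paper's proof only asserts, namely the Hermitian transfer via injectivity and $\Psi(\P^{*})=\Psi(\P)^{*}$, the positivity transfer in both directions, and the closed-submanifold claim, which the paper does not argue at all.

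On the step you flagged, computing $(\A+\B\mathbf{j})(a+b\mathbf{j})$ shows that in the basis $\mathscr{B}_{\mathbb{C}}$ the matrix of $\P$ is $\diag(\Id_{n},-\Id_{n})\,\Psi(\P)\,\diag(\Id_{n},-\Id_{n})$ rather than $\Psi(\P)$; you get exactly $\Psi(\P)$ in the basis $\{e_{1},\ldots,e_{n},-e_{1}\mathbf{j},\ldots,-e_{n}\mathbf{j}\}$. This is harmless for positivity because the conjugating matrix is real unitary, and you can avoid it entirely by reusing your spectral-theorem argument for the reverse inclusion, since $\Psi(\P)=\Psi(\U)\diag(\lambda_{1},\ldots,\lambda_{n},\lambda_{1},\ldots,\lambda_{n})\Psi(\U)^{*}>0$ forces every $\lambda_{i}>0$.
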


\begin{proof}

Write $\X \in \Mat(2n\,, \mathbb{C})$ in blocks $\X = \begin{bmatrix}\P & \Q \\ \R & \S\end{bmatrix}$. A direct computation gives
\begin{equation*}
\X\J=\begin{bmatrix} -\Q & \P \\ -\S & \R \end{bmatrix}\,, \qquad \J\overline{\X} = \begin{bmatrix} \overline{\R} & \overline{\S} \\ -\overline{\P} & -\overline{\Q} \end{bmatrix}\,.
\end{equation*}
Hence $\X\J = \J\overline{\X}$ is equivalent to $\R = -\overline{\Q}$ and $\S = \overline{\P}$, i.e.
\begin{equation*}
\X = \begin{bmatrix} \P & \Q \\ -\overline{\Q} & \overline{\P} \end{bmatrix} = \Psi\left(\P + \Q\mathbf{j}\right) \in \Psi\left(\Mat(n)\right)\,.
\end{equation*}
Intersecting with $\mathscr{S}$ (i.e.\ imposing $\X = \X^{*}$ with $\X$ positive) yields $\X = \Psi(\A)$ with $\A = \A^{*}$ and $\A$ positive, hence $\A \in \mathscr{P}$ and $\X \in \Psi(\mathscr{P})$\,.

\noindent Conversely, if $\A \in \mathscr{P}$, then $\Psi(\A) \in \mathscr{S}$ and $\Psi(\A)\J_{2n} = \J_{2n}\overline{\Psi(\A)}$ by the above block form\,.

\end{proof}

\noindent We denote by $\tau$ (see Section \ref{SectionFour}) the Riemannian metric defined on $\mathscr{S}$ by
\begin{equation}
\tau_{\Q}(\X\,, \Y) = \tr(\Q^{-1}\X\Q^{-1}\Y)\,, \qquad \left(\X\,, \Y \in \T_{\A}(\mathscr{S}) \cong \mathfrak{s}\right)\,.
\label{FormTauS}
\end{equation}

\begin{lemma}

\label{LemmaBeforeProof}

The submanifold $\Psi(\mathscr{P})$ is stable under inversion, congruence, and real powers:
\begin{enumerate}
\item If $\X \in \Psi(\mathscr{P})$, then $\X^{-1} \in \Psi(\mathscr{P})$\,.
\item If $\X \in \Psi(\mathscr{P})$, then $\X^{\frac{1}{2}} \in \Psi(\mathscr{P})$, hence $\X^{t} \in \Psi(\mathscr{P})$ for all $t \in \mathbb{R}$\,.
\item If $\A\,, \B \in \Psi(\mathscr{P})$, then $\A^{\frac{1}{2}}\left(\A^{-\frac{1}{2}}\B\A^{-\frac{1}{2}}\right)^{t}\A^{\frac{1}{2}} \in \Psi(\mathscr{P})$ for all $t \in \mathbb{R}$\,.
\item If $\A\,, \B \in \Psi(\mathscr{P})$, the geodesic in $\mathscr{S}$ (with respect to $\tau$) joining $\A$ and $\B$ stays in $\Psi(\mathscr{P})$ for all $t \in \left[0\,, 1\right]$\,.
\end{enumerate}

\end{lemma}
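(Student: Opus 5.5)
The characterization \eqref{EquationImageCone}, $\Psi(\mathscr{P}) = \{\X \in \mathscr{S}\,, \X\J_{2n} = \J_{2n}\overline{\X}\}$, reduces each item to checking one identity. Positivity and hermitianity are automatic in $\mathscr{S}$ for inverses, powers and congruences. So in every case what remains is to verify the intertwining relation $\X\J_{2n} = \J_{2n}\overline{\X}$, equivalently $\J_{2n}^{-1}\X\J_{2n} = \overline{\X}$. Since $\J_{2n}^{-1} = -\J_{2n}$, this says that $\X$ is a fixed point of the antilinear map $\sigma(\X) := \J_{2n}^{-1}\overline{\X}\J_{2n}$. Note that $\sigma$ is multiplicative and real-linear, and it commutes with taking limits.

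For (1), I conjugate the relation: from $\X\J_{2n} = \J_{2n}\overline{\X}$ I get $\J_{2n}^{-1}\X = \overline{\X}\J_{2n}^{-1}$. Inverting both sides gives $\X^{-1}\J_{2n} = \J_{2n}\overline{\X^{-1}}$, using $\overline{\X}^{-1} = \overline{\X^{-1}}$. Together with $\X^{-1} \in \mathscr{S}$, this shows $\X^{-1} \in \Psi(\mathscr{P})$.

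For (2), I use functional calculus on $\mathscr{S}$. Since $\sigma(\X) = \X$, the matrix $\X$ commutes in the appropriate sense, and I show that $\sigma(\X^{t}) = \X^{t}$. Here $\sigma(\X^{t}) = \J_{2n}^{-1}\overline{\X^{t}}\J_{2n}$, and $\overline{\X^{t}} = \overline{\X}^{t}$, because complex conjugation maps the positive matrix $\X$ to the positive matrix $\overline{\X}$ and preserves real spectral functions. Conjugation by the unitary $\J_{2n}$ also commutes with real powers (Remark \ref{RemarkSquareRootInP}(3)). Hence $\sigma(\X^{t}) = (\J_{2n}^{-1}\overline{\X}\J_{2n})^{t} = \X^{t}$.

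Alternatively, $\X^{t} = \exp(t\log \X)$, and $\log \X$ is a limit of polynomials in $\X$ with real coefficients, so it is $\sigma$-fixed. This route is cleaner, and I will present it. It also covers $t = \frac{1}{2}$ as a special case, or else I can obtain $\X^{\frac{1}{2}}$ through Theorem \ref{ExponentialTheorem} as $\Psi(\P^{\frac{1}{2}})$.

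For (3), I combine (1) and (2) with closure of $\Psi(\mathscr{P})$ under $\sigma$-fixed products that remain in $\mathscr{S}$. Since $\sigma$ is multiplicative, the product $\A^{-\frac{1}{2}}\B\A^{-\frac{1}{2}}$ is $\sigma$-fixed, and it lies in $\mathscr{S}$ by congruence. Its $t$-th power is then $\sigma$-fixed by (2), and conjugating by $\A^{\frac{1}{2}}$ preserves both properties.

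Item (4) follows because the geodesic in $\mathscr{S}$ with respect to $\tau$ is exactly the curve in (3), by the complex case of \eqref{GeodesicInP}.

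The only delicate point is justifying $\overline{\X^{t}} = \overline{\X}^{t}$ and the commutation of $\sigma$ with real powers. I expect to handle it via the polynomial-approximation or exponential-series argument: $\overline{\exp(\Y)} = \exp(\overline{\Y})$, and the logarithm on $\mathscr{S}$ is unique (Theorem \ref{ExponentialTheorem}). Uniqueness forces $\sigma(\log\X) = \log\sigma(\X) = \log\X$, because $\sigma(\log \X)$ is hermitian and exponentiates to $\sigma(\X) = \X$.
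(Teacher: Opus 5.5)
Your proposal is correct and follows the paper's proof essentially step for step: the same inversion computation in (1), closure of the relation $\X\J_{2n} = \J_{2n}\overline{\X}$ under products (together with congruence to stay in $\mathscr{S}$) in (3), and the complex geodesic formula for (4). The only variation is in (2). You get $\sigma$-invariance of $\X^{t}$ for all real $t$ at once from $\sigma(\log \X) = \log \X$, via real-polynomial interpolation on the spectrum or uniqueness of the Hermitian logarithm on $\mathscr{S}$ (for this, cite the complex case from \cite{BHATIA} rather than Theorem \ref{ExponentialTheorem}). The paper instead uses uniqueness of the positive square root and then appeals loosely to functional calculus; your version makes that last step explicit.
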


\begin{proof}

\begin{enumerate}
\item If $\X\J = \J\overline{\X}$, then multiplying on the left by $\X^{-1}$ gives $\J = \X^{-1}\J\overline{\X}$, and multiplying on the right by $\overline{\X}^{-1} =\overline{\X^{-1}}$ yields $\X^{-1}\J = \J\overline{\X^{-1}}$, so $\X^{-1} \in \Psi(\mathscr{P})$\,.
\item Let $\X \in \Psi(\mathscr{P})$. Then $\X \in \mathscr{S}$ is positive definite Hermitian and $\X\J_{2n} = \J_{2n}\overline{\X}$. Let $\Y := \X^{\frac{1}{2}}$ be the unique positive definite Hermitian square root of $\X$ in $\mathscr{S}$. Taking complex conjugates in $\Y^{2} = \X$ gives $\overline{\Y}^{2} = \overline{\X}$, hence
\begin{equation*}
\left(\J_{2n}\overline{\Y}\J_{2n}^{-1}\right)^{2} = \J_{2n}\overline{\Y}^{2}\J_{2n}^{-1} =\J_{2n}\overline{\X}\J_{2n}^{-1}\,.
\end{equation*}
Using that $\X\J_{2n} = \J_{2n}\overline{\X}$ and $\J_{2n}^{-1} = -\J_{2n}$, we have
$\J_{2n}\overline{\X}\J_{2n}^{-1}=\X$, so
\begin{equation*}
\left(\J_{2n}\overline{\Y}\J_{2n}^{-1}\right)^{2} = \X\,.
\end{equation*}
Moreover, $\J_{2n}\overline{\Y}\J_{2n}^{-1}$ is again Hermitian positive definite (since $\Y$ is).
Thus $\J_{2n}\overline{\Y}\J_{2n}^{-1}$ is a positive definite Hermitian square root of $\X$.
By uniqueness of the positive definite Hermitian square root, we obtain
\begin{equation*}
\J_{2n}\overline{\Y}\J_{2n}^{-1} = \Y\,,
\end{equation*}
which is equivalent to $\Y\J_{2n} = \J_{2n}\overline{\Y}$. Therefore $\X^{\frac{1}{2}} = \Y \in \Psi(\mathscr{P})$. Finally, for any $t\in\mathbb{R}$, define $\X^{t}$ by functional calculus in $\mathscr{S}$.
Since the relation $\M\J_{2n} = \J_{2n}\overline{\M}$ is preserved under holomorphic (hence continuous) functional calculus on the spectrum of $\X$, it follows that $\X^{t}\J_{2n} =\J_{2n}\overline{\X^{t}}$, i.e. $\X^{t} \in \Psi(\mathscr{P})$ for all $t \in \mathbb{R}$.
\item If $\A\,, \B \in \Psi(\mathscr{P})$, then by (1) and (2) we have $\A^{\pm \frac{1}{2}} \in \Psi(\mathscr{P})$, and the product of two elements satisfying $\X\J = \J\overline{\X}$ again satisfies this relation: indeed, if $\X\J = \J\overline{\X}$ and $\Y\J = \J\overline{\Y}$, then
\begin{equation*}
\left(\X\Y\right)\J = \X\left(\Y\J\right) = \X(\J\overline{\Y}) = \left(\X\J\right)\overline{\Y} = \J\overline{\X}\overline{\Y} = \J\overline{\X\Y}\,.
\end{equation*}
Therefore $\M := \A^{-\frac{1}{2}}\B\A^{-\frac{1}{2}} \in \Psi(\mathscr{P})$ and hence $\M^{t} \in \Psi(\mathscr{P})$ for all $t$ by (2). Multiplying again by $\A^{\frac{1}{2}}$ on both sides shows
$\A^{\frac{1}{2}}\M^{t}\A^{\frac{1}{2}} \in \Psi(\mathscr{P})$ for all $t \in \mathbb{R}$\,.
\item Using \cite[Chapter~6]{BHATIA}, the geodesic $\widetilde{\gamma}$ between $\A$ and $\B$ in $\mathscr{S}$ is given by $\widetilde{\gamma}(t) = \A^{\frac{1}{2}}\left(\A^{-\frac{1}{2}}\B\A^{-\frac{1}{2}}\right)^{t}\A^{\frac{1}{2}}$ is the equation of the geodesics between $\A$ and $\B$ in $\mathscr{S}$ with respect to $\tau$, and using (3), we have that $\gamma(t) \in \Psi(\mathscr{P})$ for all $t \in \left[0\,, 1\right]$\,. 
\end{enumerate}

\end{proof}

\begin{remark}

Using Equations \eqref{FormOmegaP} and \eqref{FormTauS}, it follows that the metric $\omega$ on $\mathscr{P}$ is the pull-back of the metric $\tau$ on $\Psi(\mathscr{P}) \subseteq \mathscr{S}$\,. 

\label{RemarkPullBack}

\end{remark}

\begin{proof}[Proof of Theorem \ref{MainTheorem}]

The map $\Psi$ defined in Equation \eqref{EquationPsiMatrices}
\begin{equation*}
\Psi: \left(\mathscr{P}\,, \omega\right) \longrightarrow \left(\Psi(\mathscr{P})\,, \tau_{|_{\Psi(\mathscr{P})}}\right)
\end{equation*}
is an isometry. Let $\A\,, \B \in \mathscr{P}$ and set $\widetilde{\A} = \Psi(\A)$, $\widetilde{\B} = \Psi(\B)$ in $\Psi(\mathscr{P})$. As explained in Lemma \ref{LemmaBeforeProof}, the equation of the geodesic in $\Psi(\mathscr{P})$ between $\widetilde{\A}$ and $\widetilde{\B}$ is given by
\begin{equation*}
\widetilde{\gamma}(t) = \widetilde{\A}^{\frac{1}{2}}\left(\widetilde{\A}^{-\frac{1}{2}}\widetilde{\B}\widetilde{\A}^{-\frac{1}{2}}\right)^{t}\widetilde{\A}^{\frac{1}{2}}\,, \qquad \left(t \in \left[0\,, 1\right]\right)\,.
\end{equation*}
Using that the map $\Psi$ is an isometry, it follows from \cite[Proposition~5.4]{LEE} that the equation of the geodesic in $\mathscr{P}$ between $\A$ and $\B$ is given by
\begin{equation*}
\gamma(t) = \Psi^{-1}\left(\widetilde{\gamma}(t)\right)\,, \qquad \left(t \in \left[0\,, 1\right]\right)\,,
\end{equation*}
and using that $\Psi^{-1}(\Psi(\M)^{t}) = \M^{t}$ for all $\M \in \mathscr{P}$ and $t \in \left[0\,, 1\right]$, we get that 
\begin{equation*}
\gamma(t) = \A^{\frac{1}{2}}\left(\A^{-\frac{1}{2}}\B\A^{-\frac{1}{2}}\right)^{t}\A^{\frac{1}{2}}\,.
\end{equation*}

\end{proof}

\subsection{Geometric mean on $\mathscr{P}$}

For two matrices $\A\,, \B \in \mathscr{P}$ and $t \in \left[0\,, 1\right]$, we denote by $\A \sharp_{t} \B$ the matrix in $\Mat(n)$ given by
\begin{equation*}
\A \sharp_{t} \B = \A^{\frac{1}{2}}\left(\A^{-\frac{1}{2}}\B\A^{-\frac{1}{2}}\right)^{t}\A^{\frac{1}{2}}\,,
\end{equation*}
and let $\A \sharp \B = \A \sharp_{\frac{1}{2}} \B$. As explained in Section \ref{SectionGeodesicsOnP}, $\A \sharp_{t} \B \in \mathscr{P}$ for all $t \in \left[0\,, 1\right]$\,.

\noindent We now collect several basic properties of the geometric mean $\sharp_{t}$ on $\mathscr{P}$. Since the proofs are identical to those in the real and complex settings, they are omitted.

\begin{proposition}

\begin{itemize}
\item For all $\A\,, \B \in \mathscr{P}$ and $t \in \left(0\,, 1\right)$, $\A \sharp_{t} \A = \A$ and $\A \sharp_{t} \B = \A$ if and only if $\A = \B$\,.
\item For all $\A\,, \B \in \mathscr{P}$, $\A \sharp \B$ is the unique solution of Riccati's equation
\begin{equation*}
\X\A^{-1}\X = \B\,,
\end{equation*}
\item For all $\A\,, \B \in \mathscr{P}$ and $\C \in \GL(n)$, we have
\begin{equation*}
\left(\C^{*}\A\C\right) \sharp \left(\C^{*}\B\C\right) = \C^{*}\left(\A \sharp \B\right)\C\,.
\end{equation*}
\item For all $\A_{1}\,, \A_{2}\,, \B_{1}\,, \B_{2} \in \mathscr{P}$ such that $\A_{1} \preceq \A_{2}$ and $\B_{1} \preceq \B_{2}$, we get
\begin{equation*}
\A_{1} \sharp \B_{1} \preceq \A_{2} \sharp \B_{2}\,.
\end{equation*}
\item For all $\A\,, \B \in \mathscr{P}$, then $\left(\A \sharp \B\right)^{-1} = \A^{-1} \sharp \B^{-1}$\,.
\end{itemize}

\end{proposition}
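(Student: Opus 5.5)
The plan is to transport every item to the complex case through the embedding $\Psi:\Mat(n)\to\Mat(2n\,,\mathbb{C})$ of Equation \eqref{EquationPsiMatrices}. There the corresponding statements for the cone $\mathscr{S}$ are classical (Kubo--Ando, Bhatia), and I would pull them back. The key facts used throughout are the following. First, $\Psi$ is an injective $\mathbb{R}$-algebra homomorphism with $\Psi(\X^{*})=\Psi(\X)^{*}$ and $\Psi(\X^{-1})=\Psi(\X)^{-1}$. Second, $\Psi(\mathscr{P})=\left\{\X\in\mathscr{S}\,,\ \X\J_{2n}=\J_{2n}\overline{\X}\right\}$ by Equation \eqref{EquationImageCone}. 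Third, by Lemma \ref{LemmaBeforeProof}, $\Psi(\mathscr{P})$ is stable under inversion, real powers and the two-variable expression $\A^{\frac12}(\A^{-\frac12}\B\A^{-\frac12})^{t}\A^{\frac12}$, with $\Psi(\M^{t})=\Psi(\M)^{t}$.

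From these I get $\Psi(\A\sharp_{t}\B)=\Psi(\A)\sharp_{t}\Psi(\B)$, where the right-hand side is the complex geometric mean in $\mathscr{S}$. I also need that $\Psi$ preserves and reflects the Loewner order: $\A\preceq\B$ if and only if $\Psi(\B-\A)\succeq 0$. For this I would identify $x\in\mathbb{H}^{n}$ with its complex coordinates in the basis $\mathscr{B}_{\mathbb{C}}$ of Remark \ref{RemarkSpnH}. Then $\Re(x^{*}\X x)$ equals the complex quadratic form of $\Psi(\X)$ on the corresponding vector of $\mathbb{C}^{2n}$, and since $x^{*}\X x$ is real for Hermitian $\X$, positivity on both sides matches. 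This order comparison is the one step needing a genuine (short) verification, and I expect it to be the main obstacle. I would check it via the identity $\B(u,v)=\pr_{\mathbb{C}}\langle u,v\rangle$, with $\B(u,u)=\langle u,u\rangle$ real.

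The items then follow in turn.

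\textbf{Idempotency and cancellation.} $\Psi(\A\sharp_{t}\A)=\Psi(\A)$ gives $\A\sharp_{t}\A=\A$. If $\A\sharp_{t}\B=\A$, then $\Psi(\A)\sharp_{t}\Psi(\B)=\Psi(\A)$, so $\Psi(\A)=\Psi(\B)$ by the complex result, and injectivity of $\Psi$ gives $\A=\B$.

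\textbf{Riccati equation.} $\A\sharp\B$ is a solution, since $\X\A^{-1}\X=\A^{\frac12}\M\A^{\frac12}$ with $\M=(\A^{-\frac12}\B\A^{-\frac12})^{\frac12}$ squared. For uniqueness, any solution $\X\in\mathscr{P}$ maps to a solution $\Psi(\X)\in\mathscr{S}$ of the complex Riccati equation, which is unique.

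\textbf{Congruence invariance.} $\Psi(\C^{*}\A\C)=\Psi(\C)^{*}\Psi(\A)\Psi(\C)$, so the complex identity applies and injectivity finishes.

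\textbf{Monotonicity.} The order statement is transported by the comparison lemma above.

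\textbf{Inversion.} $(\A\sharp\B)^{-1}$ solves $\Y\A\Y=\B^{-1}$, i.e.\ the Riccati equation for the pair $(\A^{-1},\B^{-1})$, so it equals $\A^{-1}\sharp\B^{-1}$ by uniqueness.
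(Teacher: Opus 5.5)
Your argument is correct, but it follows a different route from the paper. The paper gives no actual proof of this proposition. It asserts that the real/complex proofs carry over verbatim, so one is meant to rerun the Kubo--Ando/Bhatia arguments intrinsically over $\mathbb{H}$. That tacitly requires quaternionic versions of every analytic tool those arguments use: the spectral theorem (Theorem \ref{SpectralTheorem}), uniqueness of positive square roots, exponent laws for $\X^{t}$, and operator monotonicity of $\X\mapsto\X^{\frac{1}{2}}$. The appendix takes these for granted.

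You instead push everything into $\mathscr{S}\subseteq\Mat(2n\,,\mathbb{C})$ through $\Psi$ and pull the complex conclusions back. This is the same mechanism the paper itself uses for Theorem \ref{MainTheorem}. The advantage is that only structural facts need checking: $\Psi$ is an injective unital $*$-homomorphism, it intertwines real powers, and it preserves and reflects $\preceq$. After that, the two genuinely analytic items, uniqueness in the Riccati equation and monotonicity, follow formally from the complex theorems plus injectivity of $\Psi$. The intrinsic route avoids the embedding, but it would need those quaternionic tools proved separately.

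Two details to tighten. First, the identity $\Psi(\M^{t}) = \Psi(\M)^{t}$ is not contained in Lemma \ref{LemmaBeforeProof}, which only gives membership in $\Psi(\mathscr{P})$. It follows from $\Psi\circ\exp = \exp\circ\Psi$, valid because $\Psi$ is a continuous unital algebra map, together with injectivity of $\exp$ on complex Hermitian matrices. Second, in coordinates for $\mathscr{B}_{\mathbb{C}}$ (writing $x_{k} = \alpha_{k} + \mathbf{j}\beta_{k}$), the matrix of $x\mapsto\X x$ is $\diag(\Id_{n}\,, -\Id_{n})\,\Psi(\X)\,\diag(\Id_{n}\,, -\Id_{n})$, not $\Psi(\X)$ itself. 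Since this conjugation is unitary, your order comparison still goes through. That includes the semidefinite case, which Equation \eqref{EquationImageCone} alone does not cover.
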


\noindent We denote by $\preceq$ the Loewner order on $\mathscr{P}$ as in Section \ref{SectionThree}\,.

\begin{proposition}

For all $\A\,, \B \in \mathscr{P}$, we obtain
\begin{enumerate}
\item $\A \sharp \B = \B \sharp \A$\,,
\item $\left(\A \sharp \B\right)^{-1} = \A^{-1} \sharp \B^{-1}$\,.
\end{enumerate}
Moreover, let $t \in \left(0\,, 1\right)$\,.
\begin{enumerate}
\item For all $\A \in \mathscr{P}$, we have $\A \sharp_{t} \A = \A$. Moreover, we have $\A \sharp_{t} \B = \A$ if and only if $\A = \B$\,.
\item For all $\A\,, \B \in \mathscr{P}$ and $\alpha, \beta >0$, we have
\begin{equation*}
\left(\alpha\A\right) \sharp_{t} \left(\beta\B\right) = \alpha^{1-t}\beta^{t} \A \sharp_{t} \B\,.
\end{equation*}
\item For all $\A\,, \B \in \mathscr{P}$, we have
\begin{equation*}
\A \sharp_{t} \B = \B \sharp_{1-t} \A\,.
\end{equation*}
\item If $\A\,, \B\,, \C\,, \D \in \mathscr{P}$ are such that $\A \preceq \C$ and $\B \preceq \D$, then
\begin{equation*}
\A \sharp_{t} \B \preceq \C \sharp_{t} \D\,.
\end{equation*}
\end{enumerate}
\end{proposition}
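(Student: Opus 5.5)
The plan is to reduce every item to a statement about the complex cone $\mathscr{S}$ of positive definite Hermitian $2n\times 2n$ matrices via the embedding $\Psi$ of \eqref{EquationPsiMatrices}. There the properties are classical (Kubo--Ando, Bhatia), and we pull them back.

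The preliminary step is to record three compatibilities of $\Psi$ with the structure involved.
\begin{enumerate}
\item It intertwines products, inverses and adjoints.
\item It maps $\mathscr{P}$ into $\mathscr{S}$ and preserves the Loewner order. For this we check that $x^{*}\A x\geq 0$ for all $x\in\mathbb{H}^{n}$ iff $\Psi(\A)\succeq 0$ on $\mathbb{C}^{2n}$. One uses that the complex form $\B$ of Remark \ref{RemarkSpnH} satisfies $\B(u,\A u)=\pr_{\mathbb{C}}(u^{*}\A u)=u^{*}\A u$ when the latter is real, and that $\Psi(\A)$ is the matrix of $\A$ acting on $\V_{\mathbb{C}}$.
\item It commutes with real powers, $\Psi(\M^{t})=\Psi(\M)^{t}$. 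This is exactly what Lemma \ref{LemmaBeforeProof}(2) gives together with uniqueness of $\exp$ on $\mathfrak{s}$ and $\mathfrak{p}$ (Theorem \ref{ExponentialTheorem}).
\end{enumerate}
Consequently $\Psi(\A\sharp_{t}\B)=\Psi(\A)\sharp_{t}\Psi(\B)$, as already used in the proof of Theorem \ref{MainTheorem}. Also, $\Psi$ is injective and $\mathbb{R}$-linear, so $\Psi(\alpha\A)=\alpha\Psi(\A)$ for real $\alpha>0$.

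With this dictionary, each item follows from the complex case.
\begin{itemize}
\item \emph{Symmetry.} $\Psi(\A\sharp\B)=\Psi(\A)\sharp\Psi(\B)=\Psi(\B)\sharp\Psi(\A)=\Psi(\B\sharp\A)$, and injectivity gives $\A\sharp\B=\B\sharp\A$.
\item \emph{Inverse property.} Use $\Psi(\X^{-1})=\Psi(\X)^{-1}$.
\item \emph{Item (1), idempotence.} $\A\sharp_t\A=\A^{1/2}\Id\A^{1/2}=\A$ holds directly. For the converse, $\Psi(\A)\sharp_t\Psi(\B)=\Psi(\A)$ forces $\Psi(\A)=\Psi(\B)$ by the complex result.
\item \emph{Item (2), homogeneity.} Use real scalars and $(\lambda\M)^{t}=\lambda^{t}\M^{t}$ for $\lambda>0$. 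This also follows directly in $\mathscr{P}$, since $\lambda\Id$ commutes with everything and $\exp$ is injective.
\item \emph{Item (3), $\A\sharp_t\B=\B\sharp_{1-t}\A$.} Pull back the classical identity.
\item \emph{Item (4), monotonicity.} Use the order-preservation both ways, which I derived from the equivalence $x^{*}\A x\geq0\iff\Psi(\A)\succeq0$, together with monotonicity of $\sharp_t$ on $\mathscr{S}$.
\end{itemize}

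An alternative for symmetry, internal to $\mathscr{P}$, goes through the Riccati equation, mirroring the proof of the $\J$-lemma in Section \ref{SectionFive}. One uses that $\A\sharp\B$ is the unique solution of $\X\A^{-1}\X=\B$ (stated just above) and that $\B(\A\sharp\B)^{-1}\B$ solves $\Z\B^{-1}\Z=\A$. I would mention this but rely on $\Psi$.

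The main obstacle is the order-preservation claim in (2) of the preliminary step. One must verify carefully that quaternionic positivity $x^{*}\A x\geq 0$ for $x\in\mathbb{H}^{n}$ matches complex positivity of $\Psi(\A)$. The point is that the column $(z_1,-\overline{z_2})$-type coordinates of $x=z_1+z_2\mathbf{j}$ give $\Re(x^{*}\A x)=\tfrac12 w^{*}\Psi(\A)w$ for a suitable $w\in\mathbb{C}^{2n}$, and every $w$ arises this way. I would compute this for $n=1$ first and then extend entrywise.
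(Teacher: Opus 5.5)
Your proof is correct, but it takes a different route from the paper. The paper gives no proof: it only asserts that ``the proofs are identical to those in the real and complex settings.'' In other words, it implicitly re-runs each classical argument intrinsically over $\mathbb{H}$: functional calculus via the quaternionic spectral theorem, Loewner--Heinz monotonicity of $\X\mapsto\X^{t}$ for item (4), Riccati uniqueness, and so on. It never checks that these ingredients survive noncommutativity.

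You instead isolate one dictionary lemma. It says that $\Psi$ is an injective $\mathbb{R}$-linear $*$-homomorphism, that it preserves and reflects the Loewner order, and that $\Psi(\M^{t})=\Psi(\M)^{t}$. After that, every item is just the complex statement read back through $\Psi$. This is the same device the paper uses for Theorem~\ref{MainTheorem}, and it buys economy: the only genuinely quaternionic input is the positivity equivalence. You correctly identify that equivalence as the crux, and the paper's proof of \eqref{EquationImageCone} only glosses over it. The intrinsic route is self-contained over $\mathbb{H}$, but it would need each classical tool re-verified.

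You also do not need Theorem~\ref{ExponentialTheorem} for $\Psi(\M^{t})=\Psi(\M)^{t}$. The identity $\Psi\circ\exp=\exp\circ\Psi$ together with injectivity of $\exp$ on $\mathfrak{s}$ already gives $\Psi(\log\M)=\log\Psi(\M)$. As a by-product, this yields injectivity of $\exp$ on $\mathfrak{p}$.

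There are two small slips in your last paragraph; neither affects the argument.

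\emph{The factor $\tfrac12$.} Extend $\Psi$ to $n\times 1$ matrices by the same block formula. With $x=z_{1}+z_{2}\mathbf{j}$ and $w=(z_{1},-\overline{z_{2}})^{t}$, the first column of $\Psi(x)$, you get exactly $x^{*}\A x=w^{*}\Psi(\A)w$, with no $\tfrac12$. This holds because it is the $(1,1)$ entry of $\Psi(x)^{*}\Psi(\A)\Psi(x)=\Psi(x^{*}\A x)=(x^{*}\A x)\Id_{2}$. A $\tfrac12$ appears only if you take the trace of this $2\times 2$ matrix, and then two columns are involved, not a single $w$.

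\emph{The matrix of $\A$ on $\V_{\mathbb{C}}$.} In the basis $\mathscr{B}_{\mathbb{C}}$ of Remark~\ref{RemarkSpnH}, the matrix of $x\mapsto\A x$ is $\diag(\Id_{n},-\Id_{n})\,\Psi(\A)\,\diag(\Id_{n},-\Id_{n})$. So $\Psi(\A)$ is ``the matrix of $\A$ acting on $\V_{\mathbb{C}}$'' only up to this unitary conjugation, which is harmless for positivity.
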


\noindent We finish this appendix with the following proposition\,.

\begin{proposition}

\begin{enumerate}
\item For all $g \in \Sp(n)$, $t\in \left[0\,, 1\right]$ and $\A\,, \B \in \mathscr{P}$, we have
\begin{equation*}
\left(g\A g^{\sharp}\right) \sharp_{t} \left(g\B g^{\sharp}\right) = g\left(\A \sharp_{t} \B\right)g^{\sharp}\,.
\end{equation*}
\item For all $\A\,, \B\,, \C\,, \D \in \mathscr{P}$ and $t\,, s \in \left[0\,, 1\right]$, we have
\begin{equation*}
\left(1-s\right)\A \sharp_{t} \C + s \B \sharp_{t} \D \preceq \left(\left(1-s\right)\A + s \B\right) \sharp_{t} \left(\left(1-s\right)\C + s \D\right)\,.
\end{equation*}
\item For all $\A\,, \B \in \mathscr{P}$ and $t\,, s\,, u \in \left[0\,, 1\right]$,
\begin{equation*}
\left(\A \sharp_{t} \B\right) \sharp_{u} \left(\A \sharp_{s} \B\right) = \A \sharp_{(1-u)t+us}\B\,.
\end{equation*}
\end{enumerate}

\end{proposition}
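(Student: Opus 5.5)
The plan is to transport each identity and inequality from the complex cone $\mathscr{S}\subseteq\Mat(2n\,,\mathbb{C})$ to $\mathscr{P}$ through the embedding $\Psi$ of Equation \eqref{EquationPsiMatrices}. The three statements are known for complex positive definite matrices (Kubo--Ando \cite{KuboAndo1980}, Bhatia \cite{BHATIA}). In the quaternionic setting $\J=\Id_{n}$, so $g^{\sharp}=g^{*}$, and for $g\in\Sp(n)$ this equals $g^{-1}$.

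First I would collect the dictionary between $\Mat(n)$ and $\Psi(\Mat(n))$:
\begin{enumerate}
\item $\Psi$ is $\mathbb{R}$-linear, injective and multiplicative, and it satisfies $\Psi(\X^{*})=\Psi(\X)^{*}$ and $\Psi(\X^{-1})=\Psi(\X)^{-1}$.
\item $\Psi(\X^{t})=\Psi(\X)^{t}$ for $\X\in\mathscr{P}$ and $t\in\mathbb{R}$.
\item $\X\succeq 0$ in $\Mat(n)$ if and only if $\Psi(\X)\succeq 0$ in $\Mat(2n\,,\mathbb{C})$.
\end{enumerate}
For the second item, I would use Theorem \ref{SpectralTheorem}: write $\X=\U\diag(\lambda_{i})\U^{*}$ with $\U\in\Sp(n)$, so that $\Psi(\X)=\Psi(\U)\diag(\lambda_{i},\lambda_{i})\Psi(\U)^{*}$ with $\Psi(\U)$ unitary. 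Then $\Psi(\X^{t})$ and $\Psi(\X)^{t}$ coincide by the explicit formula, which is consistent with Lemma \ref{LemmaBeforeProof}. For the third item, the Hermitian form $\B=\pr_{\mathbb{C}}\langle\cdot,\cdot\rangle$ of Remark \ref{RemarkSpnH} gives $\B(\X u\,,u)=\pr_{\mathbb{C}}(u^{*}\X u)=u^{*}\X u$ when $\X=\X^{*}$, because $u^{*}\X u$ is then real. The quadratic form of $\Psi(\X)$ on $\mathbb{C}^{2n}$ is therefore exactly the quaternionic quadratic form of $\X$, so $\Psi$ both preserves and reflects the (semi)positivity. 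Combining these, $\Psi(\A\sharp_{t}\B)=\Psi(\A)\sharp_{t}\Psi(\B)$.

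With this dictionary the three items become direct transfers.
\begin{enumerate}
\item Since $\Psi(g)$ is unitary, $\Psi(g\A g^{*})=\Psi(g)\Psi(\A)\Psi(g)^{*}$, and complex congruence invariance of $\sharp_{t}$ (or simply unitary invariance of real powers) gives $\Psi\bigl((g\A g^{*})\sharp_{t}(g\B g^{*})\bigr)=\Psi\bigl(g(\A\sharp_{t}\B)g^{*}\bigr)$. Injectivity of $\Psi$ then yields the identity.
\item Apply $\Psi$ to both sides; by linearity and the compatibility of $\Psi$ with $\sharp_{t}$, the inequality becomes the joint concavity of $\sharp_{t}$ on $\mathscr{S}$. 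The reflection of the order from the third dictionary item pulls it back to $\mathscr{P}$.
\item Again apply $\Psi$ and use the complex identity $(\A\sharp_{t}\B)\sharp_{u}(\A\sharp_{s}\B)=\A\sharp_{(1-u)t+us}\B$. Alternatively, it can be verified directly by writing $\A\sharp_{t}\B=\A^{\frac{1}{2}}\M^{t}\A^{\frac{1}{2}}$ with $\M=\A^{-\frac{1}{2}}\B\A^{-\frac{1}{2}}$ and using congruence invariance together with $\M^{t}\sharp_{u}\M^{s}=\M^{(1-u)t+us}$ for commuting powers. Injectivity of $\Psi$ concludes.
\end{enumerate}

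I expect the main obstacle to be the order-reflection in the third dictionary item together with $\Psi(\X^{t})=\Psi(\X)^{t}$. These are the only places where the non-commutativity of $\mathbb{H}$ enters, and they must be checked carefully: one needs the identification of the complex quadratic form of $\Psi(\X)$ with $\Re(u^{*}\X u)$, and the uniqueness of the positive root in $\mathscr{S}$. Everything else is formal.
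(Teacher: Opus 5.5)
Your proposal is correct, but it takes a different route from the paper. The paper gives no argument for this proposition beyond the earlier remark that the real and complex proofs carry over verbatim to quaternionic matrices. You instead transfer everything to the complex cone $\mathscr{S}$ through $\Psi$, a device the appendix itself uses only for the geodesic theorem (Theorem \ref{MainTheorem}). Your dictionary is:
\begin{itemize}
\item $\Psi$ is a unital $\mathbb{R}$-linear $*$-homomorphism;
\item $\Psi(\X^{t})=\Psi(\X)^{t}$ on $\mathscr{P}$;
\item $\Psi$ preserves and reflects the Loewner order on quaternionic Hermitian matrices.
\end{itemize}
From it you get $\Psi(\A\sharp_{t}\B)=\Psi(\A)\sharp_{t}\Psi(\B)$, and each item becomes a formal consequence of the complex statement plus injectivity of $\Psi$.

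The gain is real for item (2). In the complex case, joint concavity rests on Kubo--Ando theory (operator monotonicity of $x\mapsto x^{t}$, block-matrix and Schur complement arguments). An ``identical proof'' over $\mathbb{H}$ would require re-verifying each of those ingredients over a noncommutative scalar field. Your argument needs only linearity of $\Psi$ and order reflection. Your route also shows that item (1) holds for every $g\in\GL(n)$ (with $g^{\sharp}=g^{*}$), not just $g\in\Sp(n)$, because the complex $\sharp_{t}$ is invariant under all congruences. The paper's implicit approach stays intrinsic to quaternionic matrices but leaves these checks to the reader.

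One small correction on the step you flagged. In the basis $\mathscr{B}_{\mathbb{C}}$ of Remark \ref{RemarkSpnH}, where $\pr_{\mathbb{C}}\langle\cdot\,,\cdot\rangle$ has matrix $\Id_{2n}$, the matrix of $u\mapsto\X u$ is not $\Psi(\X)$ itself. It is $\Psi(\X)$ conjugated by the unitary $\diag(\Id_{n}\,,-\Id_{n})$, which is harmless for positivity.

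You can also bypass the quadratic-form identification entirely. You already have $\Psi(\X)=\Psi(\U)\diag(\lambda_{1},\ldots,\lambda_{n},\lambda_{1},\ldots,\lambda_{n})\Psi(\U)^{*}$ with $\Psi(\U)$ unitary. By Theorem \ref{SpectralTheorem}, $\X\succeq 0$ if and only if all $\lambda_{i}\geq 0$. This gives order preservation and reflection at once.
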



\begin{thebibliography}{10}

\bibitem{AndoHiai1994}
Tsuyoshi Ando and Fumio Hiai.
\newblock Log majorization and complementary {G}olden-{T}hompson type
  inequalities.
\newblock volume 197/198, pages 113--131. 1994.
\newblock Second Conference of the International Linear Algebra Society (ILAS)
  (Lisbon, 1992).

\bibitem{BebianoLemosProvJrSoares2012}
N.~Bebiano, R.~Lemos, J.~da~Provid\^{e}ncia, and G.~Soares.
\newblock Operator inequalities for {$J$}-contractions.
\newblock {\em Math. Inequal. Appl.}, 15(4):883--897, 2012.

\bibitem{BebianoLemosSoares2024a}
N.~Bebiano, R.~Lemos, and G.~Soares.
\newblock {$J$}-selfadjoint matrix means and their indefinite inequalities.
\newblock {\em Acta Sci. Math. (Szeged)}, 90(3-4):513--525, 2024.

\bibitem{BebianoLemosSoares2023}
N.~Bebiano, R.~Lemos, and G.~Soares.
\newblock On the hyperbolicity of the {K}rein space numerical range.
\newblock {\em Linear Multilinear Algebra}, 72(14):2267--2287, 2024.

\bibitem{BebianoLemosSoares2025}
N.~Bebiano, R.~Lemos, and G.~Soares.
\newblock Matrices with hyperbolical {K}rein space numerical range.
\newblock {\em Adv. Oper. Theory}, 10(1):Paper No. 13, 19, 2025.

\bibitem{BHATIA}
Rajendra Bhatia.
\newblock {\em Positive definite matrices}.
\newblock Princeton Series in Applied Mathematics. Princeton University Press,
  Princeton, NJ, 2007.
\newblock [2015] paperback edition of the 2007 original [MR2284176].

\bibitem{BHATIAH}
Rajendra Bhatia and John Holbrook.
\newblock Riemannian geometry and matrix geometric means.
\newblock {\em Linear Algebra Appl.}, 413(2-3):594--618, 2006.

\bibitem{ChoiKimLim2022}
Hayoung Choi, Sejong Kim, and Yongdo Lim.
\newblock A binomial expansion formula for weighted geometric means of
  unipotent matrices.
\newblock {\em Linear Multilinear Algebra}, 72(4):615--630, 2024.

\bibitem{Dehghani}
M.~Dehghani and S.~M.~S. Modarres~Mosadegh.
\newblock Operator arithmetic-harmonic mean inequality on {K}rein spaces.
\newblock {\em J. Math. Ext.}, 8(1):59--68, 2014.

\bibitem{FujiiKamei2006}
Masatoshi Fujii and Eizaburo Kamei.
\newblock Ando-{H}iai inequality and {F}uruta inequality.
\newblock {\em Linear Algebra Appl.}, 416(2-3):541--545, 2006.

\bibitem{HELGASON}
Sigurdur Helgason.
\newblock {\em Differential geometry, {L}ie groups, and symmetric spaces},
  volume~34 of {\em Graduate Studies in Mathematics}.
\newblock American Mathematical Society, Providence, RI, 2001.
\newblock Corrected reprint of the 1978 original.

\bibitem{HiaiKosaki2021}
Fumio Hiai and Hideki Kosaki.
\newblock Connections of unbounded operators and some related topics: von
  {N}eumann algebra case.
\newblock {\em Internat. J. Math.}, 32(5):Paper No. 2150024, 88, 2021.

\bibitem{HORN}
Roger~A. Horn and Charles~R. Johnson.
\newblock {\em Matrix analysis}.
\newblock Cambridge University Press, Cambridge, second edition, 2013.

\bibitem{KNAPP}
Anthony~W. Knapp.
\newblock {\em Lie groups beyond an introduction}, volume 140 of {\em Progress
  in Mathematics}.
\newblock Birkh\"{a}user Boston, Inc., Boston, MA, second edition, 2002.

\bibitem{KuboAndo1980}
Fumio Kubo and Tsuyoshi Ando.
\newblock Means of positive linear operators.
\newblock {\em Math. Ann.}, 246(3):205--224, 1979/80.

\bibitem{LawsonLim2014}
Jimmie Lawson and Yongdo Lim.
\newblock Weighted means and {K}archer equations of positive operators.
\newblock {\em Proc. Natl. Acad. Sci. USA}, 110(39):15626--15632, 2013.

\bibitem{LEE}
John~M. Lee.
\newblock {\em Introduction to {R}iemannian manifolds}, volume 176 of {\em
  Graduate Texts in Mathematics}.
\newblock Springer, Cham, 2018.
\newblock Second edition of [MR1468735].

\bibitem{TRIO}
Ming Liao, Xuhua Liu, and Tin-Yau Tam.
\newblock A geometric mean for symmetric spaces of noncompact type.
\newblock {\em J. Lie Theory}, 24(3):725--736, 2014.

\bibitem{Pusz}
W.~Pusz and S.~L. Woronowicz.
\newblock Functional calculus for sesquilinear forms and the purification map.
\newblock {\em Rep. Mathematical Phys.}, 8(2):159--170, 1975.

\bibitem{SCHARLAU}
Winfried Scharlau.
\newblock {\em Quadratic and {H}ermitian forms}, volume 270 of {\em Grundlehren
  der mathematischen Wissenschaften [Fundamental Principles of Mathematical
  Sciences]}.
\newblock Springer-Verlag, Berlin, 1985.

\bibitem{ZHANG}
Fuzhen Zhang.
\newblock Quaternions and matrices of quaternions.
\newblock {\em Linear Algebra Appl.}, 251:21--57, 1997.

\end{thebibliography}
\end{document}